\newtheorem{Th}{Theorem}
\newtheorem{Lem}{Lemma}
\begin{document}

\thispagestyle{empty}

\title[Resonance in isochronous systems with decaying stochastic perturbations]{Resonance in isochronous systems with decaying oscillatory and stochastic perturbations}

\author[O.A. Sultanov]{Oskar A. Sultanov}

\address{
Institute of Mathematics, Ufa Federal Research Centre, Russian Academy of Sciences, Chernyshevsky street, 112, Ufa 450008 Russia.}
\email{oasultanov@gmail.com}


\maketitle

{\small
\begin{quote}
\noindent{\bf Abstract.} 
The combined influence of oscillatory excitations and multiplicative stochastic perturbations of white noise type on isochronous systems in the plane is investigated. It is assumed that the intensity of perturbations decays with time and the excitation frequency satisfies a resonance condition. The occurrence and stochastic stability of solutions with an asymptotically constant amplitude are discussed. By constructing an averaging transformation, we derive a model truncated deterministic system that describes possible asymptotic regimes for perturbed solutions. The persistence of resonant solutions in the phase locking and the phase drifting modes is justified by constructing suitable Lyapunov functions for the complete stochastic system. In particular, we show that decaying stochastic perturbations can shift the boundary of stability domain for resonant solutions.

 \medskip

\noindent{\bf Keywords: }{Isochronous system, decaying perturbation, resonance, phase locking, phase drifting, averaging, stochastic stability}

\medskip
\noindent{\bf Mathematics Subject Classification: }{34F15, 34C15, 34E10, 34C29, 37H30}

\end{quote}
}
{\small

\section*{Introduction}
Oscillatory systems with a fixed constant natural frequency that does not depend on the amplitude are called isochronous~\cite{FC08}. Such systems and their perturbations appear in the study of a wide range of nonlinear problems~\cite{BM61,SJJ92,PRK01,SS18}. It should be noted that resonance effects in isochronous systems under the influence of small periodic disturbances have been studied in many papers. In particular, the bounded solutions were considered in~\cite{Liu09,BF09}, the emergence of unbounded trajectories and the escape from the period annulus were studied in~\cite{OR19,FF05unb,R20}. In this paper, the presence of a small parameter is not assumed and perturbations with the decaying intensity are considered.

The influence of time-decaying disturbances on autonomous systems has been the subject of many prior studies. For example, bifurcations
in asymptotically autonomous systems have been investigated in~\cite{LRS02,KS05,MR08}, the conditions that guarantee the persistence of the autonomous dynamics were discussed in~\cite{LM56,LDP74}, significant changes in dynamics under the influence of damped disturbances were described in~\cite{HRT94}. We consider a special class of decaying oscillatory perturbations with asymptotically constant frequency and discuss the effect of such perturbations on isochronous systems in the plane. Note that this type of disturbances have been studied in several papers. In particular, the occurrence of unbounded resonant solutions for parametrically perturbed harmonic oscillators was treated in~\cite{BN10,PN13}, bifurcations of the equilibrium and various resonant regimes were discussed in~\cite{OS21DCDS}, nonlinear resonance phenomena and the emergence of attractive solutions with asymptotically constant resonant amplitude values were studied in~\cite{OS24QTDS} for isochronous systems and in~\cite{OS25DCDS} for non-isochronous systems. The main focus of this paper is on the consideration of additional effects of stochastic perturbations in such systems. 

It is well known that even small stochastic perturbations can throw the trajectories outside any bounded domain~\cite[Chapter~9]{FW98}. There are a lot of works treated oscillatory systems subject to white noise perturbations with a small constant intensity. For instance, the emergence of solutions with a constant steady-state energy for an isochronous bilinear system subject to an additive white noise was described in~\cite{MD96}, the approximations of solutions for anharmonic oscillators perturbed by noise were studied in \cite{BST94,BL22}, the stochastic averaging of randomly perturbed oscillatory systems with a finite number of critical points was discussed in \cite{MF03}, the approximations for the top Lyapunov exponent were studied in~\cite{AIN04} for a stochastically perturbed oscillator with a single-well potential and in~\cite{PHB04} for the stochastic Duffing–van der Pol equation. The combined influence of small periodic and stochastic perturbations on oscillatory systems was considered in~\cite{RNI86,HZS00,ZW03,HZ04,CNO11}. Effects of decaying stochastic perturbations on scalar autonomous systems were discussed in~\cite{AGR09,ACR11,KT13}. Bifurcations of the equilibrium in near-Hamiltonian systems subject to decaying multiplicative white noise were discussed in~\cite{OS22IJBC,OS24CPAA}. Decaying solutions in systems with damped periodic perturbations and noise were studied in~\cite{OS23SIAM}, while unboundedly growing solutions in systems with fading chirped-frequency excitations and noise were described in~\cite{OS25CNSNS}. However, the influence of white noise perturbations on resonant solutions in isochronous systems have not been studied previously. This is the subject of the present paper.

In this paper, we study the combined effect of oscillatory and multiplicative stochastic perturbations with intensity decaying in time on isochronous systems in the plane. The long-term behaviour of the perturbed trajectories is investigated and the emergence of stochastically stable states with an asymptotically constant non-zero amplitude is discussed. 

The paper is organized as follows. Section~\ref{sec1} provides the statement of the problem. The formulation of the main results is presented in Section~\ref{sec2}, while the justification is contained in subsequent sections. In particular, Section~\ref{sec3} discusses the construction of the averaging transformation that simplify the perturbed system in the first asymptotic terms at infinity in time. The analysis of the simplified system shows that the resonant solutions can occur in two different regimes: a phase locking and a phase drifting. The detailed analysis of the phase locking mode is contained in Section~\ref{sec4}. The phase drifting mode is discussed in Section~\ref{sec5}. In Section~\ref{sex}, the proposed theory is applied to some examples of asymptotically autonomous systems.

\section{Problem statement}\label{sec1}
Consider a system of It\^{o} stochastic differential equations 
\begin{gather}\label{PS}
d\begin{pmatrix}
\varrho \\ \varphi
\end{pmatrix}=
\left({\bf a}_0+{\bf a}(\varrho,\varphi,S(t),t)\right)dt + \varepsilon {\bf A}(\varrho,\varphi,S(t),t)\,d{\bf w}(t), \quad t\geq \tau_0>0,
\end{gather}
where ${\bf w}(t)\equiv (w_1(t),w_2(t))^T$ is a two dimensional Wiener process on a probability space $(\Omega,\mathcal F,\mathbb P)$, ${\bf A}(\varrho,\varphi,S,t)\equiv \{\alpha_{i,j}(\varrho,\varphi,S,t)\}_{2\times 2}$ is a $2\times 2$ matrix, ${\bf a}(\varrho,\varphi,S,t)\equiv (a_1(\varrho,\varphi,S,t),a_2(\varrho,\varphi,S,t))^T$ is a vector function and ${\bf a}_0\equiv (0,\nu_0)^T$, $\nu_0\in\mathbb R_+$. A positive parameter $\varepsilon>0$ controls the intensity of the noise. The deterministic functions $a_i(\varrho,\varphi,S,t)$ and $\alpha_{i,j}(\varrho,\varphi,S,t)$, defined for all $\varrho\in (0, \mathcal R]$, $(\varphi,S)\in\mathbb R^2$, $t>0$, are infinitely differentiable and $2\pi$-periodic with respect to $\varphi$ and $S$. It is assumed that a smooth function $S(t)$ satisfies the asymptotic estimate $S'(t)\sim s_0$ as $t\to\infty$, where $s_0$ is a positive parameter such that the resonance condition holds: there exist coprime integers $\kappa$ and $\varkappa$ such that
\begin{gather}\label{rc}
\kappa s_0=\varkappa \nu_0.
\end{gather}
Note that ${\bf a}(\varrho,\varphi,S(t),t)$ and ${\bf A}(\varrho,\varphi,S(t),t)$ correspond to perturbations of the autonomous system 
\begin{gather*}
\frac{d\hat\varrho}{dt}=0, \quad \frac{d\hat\varphi}{dt}=\nu_0,
\end{gather*}
describing isochronous oscillations on the plane $(x_1,x_2)=(\hat \varrho \cos\hat \varphi, - \hat\varrho\sin\hat\varphi)$ with a constant amplitude $\hat\varrho(t)\equiv \varrho_0$ and a period $T_0=2\pi/\nu_0$. 
It is assumed that the intensity of perturbations decays with time: for each fixed $\varrho$ and $\varphi$
\begin{gather*}
{\bf a}(\varrho,\varphi,S(t),t)\to 0, \quad {\bf A}(\varrho,\varphi,S(t),t)\to 0, \quad t\to\infty.
\end{gather*}
In this case, the perturbed stochastic system \eqref{PS} is asymptotically autonomous. It should be noted that if stochastic part of perturbations is dropped ($\varepsilon =0$), then the phase locking phenomenon in the perturbed deterministic system \eqref{PS} may occur and the attractive solutions with asymptotically constant amplitude for various initial data can arise~\cite{OS24QTDS}. Similar effects in perturbed non-isochronous systems are usually associated with a nonlinear resonance~\cite[Chapter~2]{BVC79}. In the present paper, we take into account the influence of decaying stochastic perturbations on such resonant dynamics.  

Let us specify the class of decaying perturbations. We assume that the following asymptotic expansions hold: 
\begin{gather}\label{fgas}\begin{split}
{\bf a}(\varrho,\varphi,S,t)\sim & \, \sum_{k=n}^\infty   {\bf a}_k(\varrho,\varphi,S) \mu^k(t), \\ 
{\bf A}(\varrho,\varphi,S,t)\sim &\, \sum_{k=p}^\infty  {\bf A}_k(\varrho,\varphi,S) \mu^k(t), \\
S'(t)\sim &\, s_0+\sum_{k=1}^\infty  s_k \mu^k(t)
\end{split}
\end{gather}
as $t\to\infty$ for all $\varrho\in (0, \mathcal R]$ and $(\psi,S)\in\mathbb R^2$, where $n,p\in\mathbb Z_+=\{1,2,\dots\}$, the coefficients ${\bf a}_k(\varrho,\varphi,S)\equiv (a_{1,k}(\varrho,\varphi,S),a_{2,k}(\varrho,\varphi,S))^T$ and ${\bf A}_k(\varrho,\varphi,S)\equiv \{\alpha_{i,j,k}(\varrho,\varphi,S)\}_{2\times 2}$ are $2\pi$-periodic with respect to $\varphi$ and $S$, $s_k$ are real parameters. These series are assumed to be asymptotic as $t\to\infty$. A smooth positive function $\mu(t)$ is strictly decreasing, $\mu(t)\to 0$ as $t\to\infty$ and satisfies the following:  
\begin{gather}\label{mucond}
\exists\, m\in\mathbb Z_+, \chi_m\leq 0: \quad  
\ell(t):=\frac{d}{dt} \log \mu(t) \to 0, \quad  
\frac{\ell(t)}{\mu^{m}(t)}\to \chi_m, \quad 
\frac{\mu^{m+1}(t)}{\ell(t)}\to 0, 
\end{gather}
as $t\to\infty$. Define $\tilde \ell(t):={\ell(t)}{\mu^{-m}(t)}-\chi_m$. It follows from \eqref{mucond} that
\begin{gather*}
\mu'(t)=\mathcal O(\mu^{m+1}(t)), \quad \ell(t)=\mathcal O(\mu^m(t)), \quad \mu^{m+1}(t)=o(\ell(t)), \quad \tilde \ell(t)=o(1), \quad t\to\infty.
\end{gather*}
Note that $\mu(t)\equiv t^{-\alpha}$ with $\alpha\in (0,1]$ satisfies \eqref{mucond} with $\ell(t)\equiv -\alpha t^{-1}$, $m= \lfloor \alpha^{-1} \rfloor$, $\chi_m=0$ if $\alpha^{-1}\not\in\mathbb Z_+$ and $\chi_m=-\alpha$ if $\alpha^{-1}\in\mathbb Z_+$. Another example is given by $\mu(t)\equiv t^{-\alpha} \log t$ with $\alpha\in (0,1]$. In this case, $\ell(t)\equiv - t^{-1}(\alpha-(\log t)^{-1})$, $m=\lfloor \alpha^{-1} \rfloor$ and $\chi_m=0$. Note that such decaying coefficients arise in many nonlinear and non-autonomous problems (see, for example,~\cite{KF13}).

Consider the example
\begin{gather}\label{Ex0}
dx_1=x_2dt, \quad dx_2=\left(-x_1+t^{-\frac{n}{4}} f(x_1,x_2,S(t))\right) dt+\varepsilon t^{-\frac{p}{4}} g(x_1,S(t))\, dw_1(t),
\end{gather}
with 
\begin{gather*}
f(x_1,x_2,S)\equiv  A_1 x_1\cos S+ ( B_0 +  B_1 \sin S) x_2+ C_0 x_2^3, \\
g(x,S)\equiv x \sin S, \quad S(t)\equiv s_0 t+\frac{4s_1}{3}t^{\frac{3}{4}},
\end{gather*} 
and $A_i,B_i,C_i\in\mathbb R$. It can easily be checked that system \eqref{Ex0} in the variables $\varrho=\sqrt{x_1^2+x_2^2}$, $\varphi=-\arctan(x_2/x_1)$ takes the form \eqref{PS} with $\nu_0=1$, $\mu(t)=t^{-1/4}$, 
\begin{gather}\label{fgex0str}
\begin{split}
a_i(\varrho,\varphi,S,t) \equiv & \, \mu^n(t)a_{i,n}(\varrho,\varphi,S)+\mu^{2p}(t) a^\varepsilon_{i,2p}(\varrho,\varphi,S),\\
\alpha_{i,j}(\varrho,\varphi,S,t) \equiv & \, \mu^p(t)\alpha_{i,j,p}(\varrho,\varphi,S),
\end{split}
\end{gather}
where
\begin{equation}\label{fgex0}
\begin{array}{rclrcl}
a_{1,n}&\equiv& -f(\varrho \cos\varphi,-\varrho\sin\varphi,S)\sin\varphi, 
& a^\varepsilon_{1,2p}&\equiv& \varepsilon^2 (2 \varrho)^{-1} g^2(\varrho \cos\varphi,S) \cos^2\varphi , \\
a_{2,n}&\equiv& -\varrho^{-1}f(\varrho \cos\varphi,-\varrho\sin\varphi,S)\cos\varphi,   &
a^\varepsilon_{2,2p}&\equiv & -  \varepsilon^2 (\sqrt 2\varrho )^{-2} g^2(\varrho \cos\varphi,S)\sin 2\varphi ,\\
\alpha_{1,1,p}&\equiv& -g(\varrho \cos\varphi,S)\sin\varphi,  & \alpha_{1,2,p}&\equiv& 0, \\
\alpha_{2,1,p}&\equiv& -\varrho^{-1}g(\varrho \cos\varphi,S)\cos\varphi, & \alpha_{2,2,p}&\equiv& 0.
\end{array}
\end{equation} 
Note that if $f(x_1,x_2,S)\equiv 0 $ and $\varepsilon=0$, then $\varrho(t)\equiv \varrho_0$ and $\varphi(t)\equiv \omega t+\varphi_0$, where $\varrho_0$ and $\varphi_0$ are arbitrary constants. If there is no parametric excitation $ A_1= B_1=\varepsilon=0$ and $ B_0 < 0$, then system \eqref{Ex0} is not self-excited (see~Fig.~\ref{FigEx0}, a). 
By taking $s_0=2$, we see that the resonance condition \eqref{rc} holds with $\kappa=1$, $\varkappa=2$. If $|A_1|+|B_1|\neq 0$ and $ B_0 < 0$, then along with damped solutions, the solutions with a steady-state amplitude may appear such that $\varrho(t)\approx \rho_0$ with some one fixed value $\rho_0>0$ for different initial data (see~Fig.~\ref{FigEx0}, b, c). Thus, the goal of the paper is to describe the conditions that guarantee the existence and stability of such resonant solutions with an asymptotically constant amplitude in the perturbed stochastic system \eqref{PS} satisfying \eqref{rc}, \eqref{fgas} and \eqref{mucond}.

\begin{figure}
\centering
\subfigure[$B_1=\varepsilon=0$]{
\includegraphics[width=0.3\linewidth]{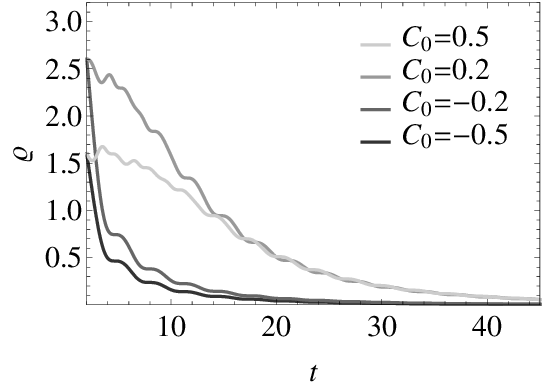}
}
\hspace{1ex}
 \subfigure[$C_0=-0.2$, $\varepsilon=0$]{
 \includegraphics[width=0.3\linewidth]{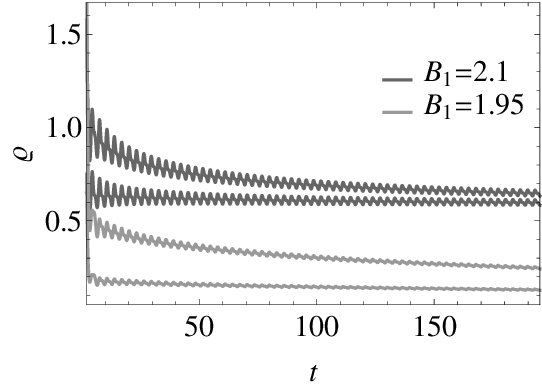}
}
\hspace{1ex}
\subfigure[$C_0=-0.2$, $\varepsilon=2/3$]{
 \includegraphics[width=0.3\linewidth]{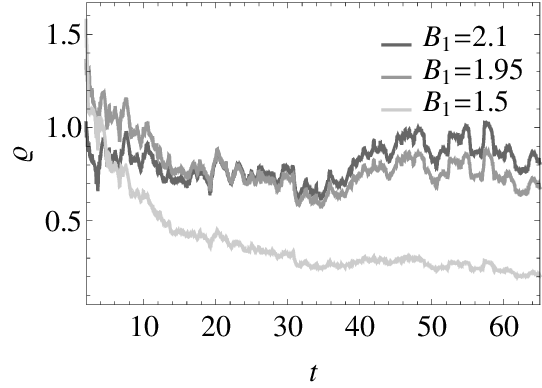}
}
\caption{\small 
The evolution of $\varrho(t)\equiv \sqrt{x_1^2(t)+x_2^2(t)}$ for sample paths of solutions to system \eqref{Ex0} with $s_0=2$, $n=2$, $p=1$, $A_1=0$, $B_0=-1$ and different values of the parameters $B_1$, $C_0$, $\varepsilon$ and initial data.} \label{FigEx0}
\end{figure}

\section{Main results}\label{sec2}

The proposed analysis is based on the simplification of the perturbed equations in the first asymptotic terms as $t\to\infty$. 
Denote by $\langle F(S)\rangle_{\varkappa S}$ the averaging of any function $F(S)$ over $S$ for the interval $[0,2\pi\varkappa]$:
\begin{gather*}
\langle F(S)\rangle_{\varkappa S}\equiv \frac{1}{2\pi\varkappa}\int\limits_0^{2\pi\varkappa} F(S)\,dS.
\end{gather*}
We have the following:
\begin{Th}\label{Th1}
Let system \eqref{PS} satisfy \eqref{rc}, \eqref{fgas} and \eqref{mucond}. Then, for all $N\in [n,m]$ and $\epsilon\in (0,\mathcal R/2)$ there exist $t_0\geq \tau_0$ and the reversible transformations $(\varrho,\varphi)\mapsto (R,\Psi)\mapsto (r,\psi)$,
\begin{align}
\label{ch1} 	R(t)=&\varrho(t), \quad 	& \Psi(t) =& \varphi(t)-\frac{\kappa}{\varkappa}S(t),\\
\label{ch2} r(t)=&R(t)+\tilde U_N(R(t),\Psi(t),t), \quad 	& \psi(t)=& \Psi(t)+\tilde V_N(R(t),\Psi(t),t),
\end{align}
where 
\begin{gather}\label{tildeUNVN}
|\tilde U_N(R,\Psi,t)|\leq \epsilon, \quad |\tilde V_N(R,\Psi,t)|\leq \epsilon, \quad \forall\, R\in(0,\mathcal R], \quad \Psi\in\mathbb R, \quad t\geq t_0,
\end{gather}
and $\tilde U_N(R,\Psi,t)=\mathcal O(\mu(t))$, $\tilde V_N(R,\Psi,t)=\mathcal O(\mu(t))$ as $t\to\infty$ such that system \eqref{PS} can be transformed into
\begin{gather}\label{rpsi}
d\begin{pmatrix}
r \\ \psi
\end{pmatrix} = \begin{pmatrix} \Lambda(r,\psi,S(t),t) \\ 
\Omega(r,\psi,S(t),t)\end{pmatrix}dt+\varepsilon {\bf C}(r,\psi,S(t),t)\, d{\bf w}(t),
\end{gather}
with  $\Lambda(r,\psi,S,t)\equiv\widehat\Lambda_N(r,\psi,t)+ \widetilde\Lambda_N(r,\psi,S,t)$, $\Omega(r,\psi,S,t)\equiv\widehat\Omega_N(r,\psi,t)+\widetilde\Omega_N(r,\psi,S,t)$, ${\bf C}(r,\psi,S,t)\equiv \{\sigma_{i,j}(r,\psi,S,t)\}_{2\times 2}$, defined for all $r\in (0, \mathcal R]$, $(\psi,S)\in\mathbb R^2$, $t\geq t_0$ such that
\begin{gather}
\nonumber
\widehat\Lambda_N(r,\psi,t)\equiv \sum_{k=n}^N \Lambda_k(r,\psi) \mu^k(t), \quad 
\widehat\Omega_N(r,\psi,t)\equiv \sum_{k=1}^N \Omega_k(r,\psi) \mu^k(t),\\
\label{tildeLO}
\widetilde\Lambda_N(r,\psi,S,t)=\mathcal O\left(\mu^{N+1}(t)\right),
 \quad
\widetilde\Omega_N(r,\psi,S,t)=\mathcal O\left(\mu^{N+1}(t)\right), 
\quad 
\sigma_{i,j}(r,\psi,S,t)=\mathcal O\left(\mu^{p}(t)\right)
\end{gather}
as $t\to\infty$ uniformly for all $r\in [\epsilon, \mathcal R-\epsilon]$ and $(\psi,S)\in\mathbb R^2$, where $\Lambda_k(r,\psi)$ and $\Omega_k(r,\psi)$ are $2\pi$-periodic in $\psi$, $\Omega_k(r,\psi)\equiv -\kappa s_k/\varkappa$ for $k<n$, 
\begin{gather*}
\Lambda_n(r,\psi)\equiv \left\langle a_{1,n}\left(r,\frac{\kappa}{\varkappa}S +\psi,S\right)\right\rangle_{\varkappa S}, \quad  \Omega_n(r,\psi) \equiv \left\langle a_{2,n}\left(r,\frac{\kappa}{\varkappa}S +\psi,S\right)\right\rangle_{\varkappa S}-\frac{\kappa s_n}{\varkappa}.
\end{gather*}
\end{Th}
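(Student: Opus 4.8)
The plan is to carry out the two changes of variables successively, the first being a trivial passage to a co-rotating frame and the second an iterative Krylov--Bogolyubov averaging performed order by order in powers of $\mu(t)$ and adapted to the It\^o calculus.

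First, the linear substitution \eqref{ch1} is immediate: with $R=\varrho$ and $\Psi=\varphi-(\kappa/\varkappa)S(t)$ one has $d\Psi=d\varphi-(\kappa/\varkappa)S'(t)\,dt$, so the resonance relation \eqref{rc} ($\nu_0=(\kappa/\varkappa)s_0$) cancels the leading rotation and the expansion of $S'(t)$ in \eqref{fgas} leaves a $\Psi$-drift whose terms of order $\mu^k$ with $k<n$ are the constants $-\kappa s_k/\varkappa$ (there is no contribution of $a_2$ at those orders since $n\le k$ fails), matching the asserted $\Omega_k$. The crucial structural point is that, because $a_i$ and $\alpha_{i,j}$ are $2\pi$-periodic in both $\varphi$ and $S$, after inserting $\varphi=\Psi+(\kappa/\varkappa)S$ the coefficients become $2\pi\varkappa$-periodic in $S$ with $R,\Psi$ entering only slowly; this is exactly why $\langle\cdot\rangle_{\varkappa S}$ over $[0,2\pi\varkappa]$ is the pertinent averaging.

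Next I would look for $\tilde U_N,\tilde V_N$ in the form $\sum_{k=n}^N u_k(R,\Psi,S(t))\mu^k(t)$ and $\sum_{k=n}^N v_k(R,\Psi,S(t))\mu^k(t)$ with $2\pi\varkappa$-periodic coefficients, and determine $u_k,v_k$ recursively. Applying It\^o's formula to the deterministic maps $r=R+\tilde U_N$, $\psi=\Psi+\tilde V_N$, the dominant part of $\partial_t$ acting on the explicit time dependence is $s_0\,\partial_S u_k\,\mu^k$ (resp.\ $s_0\,\partial_S v_k\,\mu^k$), since $S'(t)\to s_0$, whereas the terms produced by $\mu'(t)=\ell(t)\mu(t)$ and by the lower-order corrections of $S'(t)$ are of order $\mu^{k+m}$ or higher and, since $N\le m$ in \eqref{mucond} forces $k+m\ge m+1>N$, they lie beyond the truncation order. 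At order $\mu^k$ this gives a homological equation $s_0\,\partial_S u_k=\langle\,\cdot\,\rangle_{\varkappa S}-(\,\cdot\,)$ with a zero-$S$-mean right-hand side built from already known quantities, hence solvable with a $2\pi\varkappa$-periodic $u_k$ (unique up to an irrelevant $S$-independent term); the surviving $S$-independent part defines $\Lambda_k$, $\Omega_k$, and at the first step one recovers the stated formulas for $\Lambda_n$, $\Omega_n$. The quadratic-variation corrections enter the drift at order $\mu^{2p}$ --- the stochastic analogue of the $a^\varepsilon$ terms in \eqref{fgex0str} --- and are simply absorbed into the $\Lambda_k,\Omega_k$ with $k\ge\max\{n,2p\}$; the diffusion matrix transforms tensorially into ${\bf C}=\{\sigma_{ij}\}$, which stays $\mathcal O(\mu^p)$ because the Jacobian of the near-identity map is $I+\mathcal O(\mu)$.

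Finally I would verify the quantitative claims. Taking $t_0$ large makes each $\mu^k(t_0)$ small, so on $R\in[\epsilon,\mathcal R-\epsilon]$ (where the recursively defined $u_k,v_k$, which may be singular as $R\to0$ due to factors like $R^{-1}$ as in \eqref{fgex0}, are bounded) one gets $|\tilde U_N|,|\tilde V_N|\le\epsilon$ and $\tilde U_N,\tilde V_N=\mathcal O(\mu)$, and the map $(R,\Psi)\mapsto(r,\psi)$ is a diffeomorphism onto its image with smooth inverse; a standard cutoff/extension argument keeps the bound $|\tilde U_N|,|\tilde V_N|\le\epsilon$ on all of $(0,\mathcal R]\times\mathbb R$. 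The remainders $\widetilde\Lambda_N,\widetilde\Omega_N$ then collect the tails of \eqref{fgas} (which are $\mathcal O(\mu^{N+1})$), the uncancelled $\mu'$- and $\ell$-generated terms (which by \eqref{mucond} and $N\le m$ are $\mathcal O(\mu^{m+1})$, hence $\mathcal O(\mu^{N+1})$), and the higher-order products from composing the expansions, all uniformly for $r\in[\epsilon,\mathcal R-\epsilon]$ and $(\psi,S)\in\mathbb R^2$. I expect the principal obstacle to be the bookkeeping in this last stage: keeping the iterated homological equations, the It\^o corrections, and the $\mu'/\ell$ corrections mutually consistent through order $N$ and confirming that every error term is genuinely $\mathcal O(\mu^{N+1})$ uniformly on the truncated amplitude interval --- the solvability of each individual homological equation, by contrast, is automatic from the zero-mean construction.
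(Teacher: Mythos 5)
Your overall strategy is exactly the paper's: pass to the co-rotating frame \eqref{ch1}, then build a near-identity transformation $\sum u_k\mu^k$, $\sum v_k\mu^k$ with $2\pi\varkappa$-periodic coefficients determined by homological equations $s_0\partial_S(\cdot)=\langle\cdot\rangle_{\varkappa S}-(\cdot)$, let the $S$-averages define $\Lambda_k,\Omega_k$, account for the It\^o quadratic-variation corrections in the drift recursion, and recover invertibility and the $\epsilon$-bounds by taking $t_0$ large. The identification of $\Omega_k\equiv-\kappa s_k/\varkappa$ for $k<n$ and of $\Lambda_n,\Omega_n$ as the stated averages is also as in the paper.

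There is, however, one concrete accounting error. You claim that the terms produced by the lower-order corrections of $S'(t)$ are of order $\mu^{k+m}$ and hence ``lie beyond the truncation order.'' This is true for the terms generated by $\mu'(t)=\ell(t)\mu(t)$ (since $\ell=\mathcal O(\mu^m)$ and $N\leq m$), but not for the $S'$ corrections: differentiating $u_k(R,\Psi,S(t))\mu^k(t)$ in $t$ produces $s_j\,\partial_S u_k\,\mu^{k+j}$ for every $j\geq 1$, which is only $\mathcal O(\mu^{k+1})$ --- as low as $\mathcal O(\mu^{n+1})$ for $k=n$, $j=1$. If these $S$-dependent terms were dumped into the remainder, $\widetilde\Lambda_N$ and $\widetilde\Omega_N$ would fail to be $\mathcal O(\mu^{N+1})$ whenever $N>n$. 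They must instead be fed into the homological equations at order $k+j$, which is precisely what the paper does by including the $s_j\partial_S$ operators in the recursion terms of \eqref{drpas} and in the $\tilde{\bf h}_k$ of \eqref{ukvk}. Your own phrase ``built from already known quantities'' would accommodate this, so the fix is purely a matter of correcting the order count; but as written the claim is false and would break the stated remainder estimate for $N>n$. (A smaller inaccuracy of the same kind: the It\^o corrections first enter at order $\mu^{2p+n}$, not $\mu^{2p}$, since $u_l\equiv v_l\equiv 0$ for $l<n$; this one is harmless.)
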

The proof is contained in Section~\ref{sec3}.

Thus, it follows from \eqref{ch1} and \eqref{ch2} that the solutions of system \eqref{rpsi} with asymptotically constant amplitude $r(t)$ correspond to resonant solutions of system \eqref{PS} with $\varrho(t)\approx {\hbox{\rm const}}$. 

Let us remark that the transformation described in Theorem~\ref{Th1} averages with respect to $S$ only the drift terms of equations, while the diffusion coefficients are not simplified. The proposed method is based on the study of the simplified dynamics described by a truncated system 
\begin{gather}\label{trsys}
\frac{d\rho}{dt}=\Lambda(\rho,\phi,S(t),t), \quad \frac{d\phi}{dt}=\Omega(\rho,\phi,S(t),t),
\end{gather}
obtained from \eqref{rpsi} after dropping the stochastic part of the equations. At the second stage, the stochastic stability of this dynamics is proved in the complete system.

It is known that qualitative and asymptotic properties of solutions to asymptotically autonomous systems depend on the corresponding limiting equations (see, for example,~\cite{LM56}). Note that the form of the limiting system corresponding to \eqref{trsys} can vary depending on the properties of perturbations.

Let $q\in [1,N]$ be an integer such that 
\begin{gather}\label{asq}
\begin{split}
&\Omega_k(\rho,\phi)\equiv 0, \quad  k<q, \quad \Omega_q(\rho,\phi)\not\equiv 0.
\end{split}
\end{gather}
From the proof of Theorem~\ref{Th1} it follows that in this case $\Omega_q(\rho,\phi)$ has the following form:
\begin{gather}\label{Omegaq}
\Omega_q(\rho,\phi)\equiv 
\begin{cases} 
\displaystyle-\frac{\kappa s_q}{\varkappa}, & \text{if} \quad q<n,\\
\displaystyle \left\langle a_{2,q}\left(\rho,\frac{\kappa}{\varkappa}S +\phi,S\right)\right\rangle_{\varkappa S}-\frac{\kappa s_q}{\varkappa}, & \text{if } \quad q\geq n.
\end{cases}
\end{gather}
Hence, under assumption \eqref{asq}, the limiting system has the following form:
\begin{gather}\label{limsys}
\frac{d\hat \rho}{dt}=\mu^n(t)\Lambda_n(\hat\rho,\hat\phi), \quad 
\frac{d\hat \phi}{dt}=\mu^q(t)\Omega_q(\hat\rho,\hat\phi).
\end{gather}
Consider the dynamics near fixed points of system \eqref{limsys}. Assume that 
\begin{gather}\label{as1}
\exists\, \rho_0\in(0,\mathcal R), \phi_0\in\mathbb R: \quad \Lambda_n(\rho_0,\phi_0)=0, \quad \Omega_q(\rho_0,\phi_0)=0, \quad \mathcal D_{n,q}\neq 0,
\end{gather}
where 
\begin{gather*}
\mathcal D_{n,q}:=\det {\bf Y}_{n,q}(\rho_0,\phi_0,1), \quad 
{\bf Y}_{n,q}(\rho,\phi,\mu)\equiv \begin{pmatrix}\mu^n\partial_\rho\Lambda_n(\rho,\phi) & \mu^n\partial_\phi\Lambda_n(\rho,\phi) \\ \mu^q\partial_\rho\Omega_q(\rho,\phi)  & \mu^q\partial_\phi\Omega_q(\rho,\phi)
\end{pmatrix}.
\end{gather*}
Note that the case $\mathcal D_{n,q}=0$ corresponds to some bifurcations of the limiting system and is not discussed in this paper. From \eqref{Omegaq} it follows that \eqref{as1} can hold only if $q\geq n$. Define the parameters
\begin{align*}
\lambda_n=\partial_\rho \Lambda_n(\rho_0,\phi_0), \quad 
\xi_n=\partial_\phi \Lambda_n(\rho_0,\phi_0), \quad 
\eta_q=\partial_\rho\Omega_q(\rho_0,\phi_0), \quad 
\omega_q=\partial_\phi\Omega_q(\rho_0,\phi_0).
\end{align*}
Then, the eigenvalues of ${\bf Y}_{n,q}(\rho_0,\phi_0,\mu(t))$ are given by
\begin{gather*}
y_1(t)\equiv \mu^n(t)\beta_1(t), \quad y_2(t)\equiv \mu^q(t)\beta_2(t), \\
\beta_{j}(t)=\begin{cases} \beta_j^0, & \text{if} \quad q=n, \\
\beta_j^0 +\mathcal O(\mu^{q-n}(t)), & \text{if} \quad q>n,
\end{cases}
\end{gather*}
where  
\begin{align*}
& \beta_1^0=
\begin{cases}
\lambda_n, & \text{if} \quad q>n, \\
\frac{1}{2}\left(\lambda_n+\omega_q+\sqrt{(\lambda_n+\omega_q)^2-4\mathcal D_{n,q}}\right), & \text{if} \quad q=n,
\end{cases}
\\ 
& \beta_2^0=
\begin{cases}
\lambda_n^{-1} \mathcal D_{n,q}, & \text{if} \quad q>n, \\
\frac{1}{2}\left(\lambda_n+\omega_q-\sqrt{(\lambda_n+\omega_q)^2-4\mathcal D_{n,q}}\right), & \text{if} \quad q=n.
\end{cases}
\end{align*}
Define
\begin{gather}\label{zetah}
\gamma_k(t):=\int\limits_{t_0}^t \mu^k(\varsigma)\, d\varsigma, \quad , \quad 
\tilde \beta_2^0=\beta_2^0-\delta_{m,q}\frac{(q-n)\chi_m}{2}.
\end{gather}
We have the following lemma.
\begin{Lem}\label{Lem1}
Let assumption \eqref{as1} hold with $q\geq n$ and $\gamma_q(t)\to\infty$ as $t\to\infty$.
\begin{itemize}
\item If $q=n$ and 
\begin{itemize}
\item $\Re \beta^0_j<0$ for all $j\in\{1,2\}$, then the equilibrium $(\rho_0,\phi_0)$ of system \eqref{limsys} is asymptotically stable;
\item $\Re \beta^0_j>0$ for some $j\in\{1,2\}$, then the equilibrium $(\rho_0,\phi_0)$ of system \eqref{limsys} is unstable.
\end{itemize}
\item If $q>n$ and 
\begin{itemize}
\item $\beta^0_1<0$, $\tilde\beta^0_2<0$, then the equilibrium $(\rho_0,\phi_0)$ of system \eqref{limsys} is asymptotically stable;
\item $\beta^0_1>0$, $\tilde\beta^0_2>0$, then the equilibrium $(\rho_0,\phi_0)$ of system \eqref{limsys} is unstable.
\end{itemize}
\end{itemize}
\end{Lem}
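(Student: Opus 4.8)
The plan is to split into the cases $q=n$ and $q>n$, reducing the former to a classical autonomous stability problem and treating the latter by a weighted Lyapunov function adapted to the two rates $\mu^n(t)$ and $\mu^q(t)$ present in \eqref{limsys}.

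If $q=n$, the substitution of the independent variable $\theta=\gamma_n(t)$ transforms \eqref{limsys} into the \emph{autonomous} system $d\hat\rho/d\theta=\Lambda_n(\hat\rho,\hat\phi)$, $d\hat\phi/d\theta=\Omega_n(\hat\rho,\hat\phi)$, whose linearization at $(\rho_0,\phi_0)$ is the matrix with rows $(\lambda_n,\xi_n)$ and $(\eta_q,\omega_q)$; its trace and determinant are $\lambda_n+\omega_q$ and $\mathcal D_{n,q}$, so its eigenvalues are $\beta_1^0$ and $\beta_2^0$, and $\mathcal D_{n,q}\neq 0$ rules out a zero eigenvalue. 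Since $\gamma_n(t)\to\infty$, the map $t\mapsto\theta$ is an increasing bijection of $[t_0,\infty)$ onto $[0,\infty)$, so Lyapunov and asymptotic stability of the fixed point are invariant under this reparametrization, and both assertions follow from the classical theorem on stability (and instability) by the first approximation, e.g. via a constant quadratic Lyapunov function $z^TPz$ with $P$ the solution of the associated Lyapunov matrix equation. Note that here the Lyapunov function is time-independent and carries no correction, which is why the statement involves $\beta^0_j$ rather than $\tilde\beta_2^0$.

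For $q>n$ the equations evolve on two scales, a fast one for $u=\hat\rho-\rho_0$ (scale $\mu^n$) and a slow one for $v=\hat\phi-\phi_0$ (scale $\mu^q$), with $\mu^n(t)/\mu^q(t)\to\infty$. Since $\beta_1^0=\lambda_n\neq 0$, one first constructs a bounded scalar function $c(t)\to\xi_n/\lambda_n$, defined for $t\geq t_0$ after enlarging $t_0$, solving the scalar Riccati-type equation that makes $w:=u+c(t)v$ obey $\dot w=\mu^n(t)(\lambda_n+o(1))w+\mathcal O(\mu^n(t)(w^2+v^2))$; boundedness of $c$ uses $\mu'=\mathcal O(\mu^{m+1})$ and $\lambda_n\neq 0$. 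In these variables $\dot v=\mu^q(t)\big((\beta_2^0+o(1))v+\eta_q w+\mathcal O(w^2+v^2)\big)$. Now take $W(w,v,t)=A\,w^2+\mu^{n-q}(t)\,v^2$ with a large constant $A>0$; the weight $\mu^{n-q}$ is forced by the requirement that the contributions of $w^2$, $v^2$ and $wv$ to $\dot W$ all be of order $\mu^n$. A direct computation then gives, for $t$ large and $(w,v)$ near the origin,
\begin{gather*}
\dot W=\mu^n(t)\Big(2A\lambda_n\,w^2+2\eta_q\,wv+2\tilde\beta_2^0\,v^2+o(w^2+v^2)+\mathcal O(|(w,v)|^3)\Big),
\end{gather*}
where the shifted rate $\tilde\beta_2^0=\beta_2^0-\delta_{m,q}(q-n)\chi_m/2$ comes precisely from the term $\big(\tfrac{d}{dt}\mu^{n-q}(t)\big)v^2=(n-q)\ell(t)\mu^{n-q}(t)v^2$, which by \eqref{mucond} is $o(\mu^n(t))$ when $q<m$ and asymptotic to $(n-q)\chi_m\mu^n(t)$ when $q=m$. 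If $\beta_1^0<0$ and $\tilde\beta_2^0<0$, choose $A$ so large that the quadratic form in $(w,v)$ is negative definite; then $\dot W\leq -c\,\mu^n(t)(w^2+v^2)\leq -c'\mu^q(t)W$ near the origin, and since $\gamma_q(t)\to\infty$ a Gronwall estimate forces $W\to0$ along trajectories starting close to $(\rho_0,\phi_0)$, hence $w\to0$ and, since $\mu^{n-q}(t)\to\infty$, also $v\to0$ and $u\to0$; monotonicity of $W$ together with the monotonicity of $\mu^{q-n}$ gives Lyapunov stability as well. If $\beta_1^0>0$ and $\tilde\beta_2^0>0$, the same $W$ is positive definite in $(w,v)$ with $\dot W\geq c'\mu^q(t)W>0$; alternatively, whenever $w(t_0)\neq0$ the fast equation alone gives $|w(t)|\to\infty$ as long as the trajectory stays near $(\rho_0,\phi_0)$ (here $\gamma_n(t)\to\infty$ is used), which is impossible, so the fixed point is unstable.

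The routine parts are the smoothness-based bounds on the nonlinear remainders and the Gronwall-type integration against $\gamma_n(t)$ or $\gamma_q(t)$. The delicate point is the $q>n$ case: producing the bounded decoupling function $c(t)$, and above all carrying the $\mu'(t)$, $\ell(t)$ terms — negligible everywhere except in the borderline case $q=m$ — through $\dot W$ so that exactly the rate $\tilde\beta_2^0$ (and not $\beta_2^0$) emerges; this is the only place where the full strength of \eqref{mucond}, namely $\ell\mu^{-m}\to\chi_m$ together with $\mu^{m+1}=o(\ell)$, enters.
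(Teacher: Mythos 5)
Your proof is correct and follows essentially the same route as the paper: the $q=n$ case is handled identically by reparametrizing time via $\gamma_n$ and invoking linearized (in)stability of the resulting autonomous system, and in the $q>n$ case both arguments rest on a quadratic Lyapunov function carrying the weight $\mu^{n-q}(t)$ on the phase deviation, with the shift from $\beta_2^0$ to $\tilde\beta_2^0$ arising in both from differentiating that weight (non-negligible exactly when $q=m$, by \eqref{mucond}), followed by integrating the resulting differential inequality for $W$ against $\gamma_q$. The only cosmetic difference is that you remove the $\xi_n$-coupling by the substitution $w=u+c(t)v$ and absorb the remaining $\eta_q$ cross term by taking $A$ large, whereas the paper rescales $z_2=\mu^{-(q-n)/2}(t)(\hat\phi-\phi_0)$ and keeps an explicit cross term $\mu^{(q-n)/2}(t)B_2z_1z_2$ in the Lyapunov function with exactly tuned coefficients $B_1,B_2$.
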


Consider the case, when the equilibrium $(\rho_0,\phi_0)$ of system \eqref{limsys} is stable. Let us show that this dynamics is preserved in the truncated system \eqref{trsys}. In particular, we have the following lemma.

\begin{Lem}\label{Lem2}
Let assumptions \eqref{asq}, \eqref{as1} hold with $n\leq q<  (2N+2+n)/3$. If $\Re\beta_1^0<0$ and $\Re\tilde\beta_2^0<0$, then system \eqref{trsys} has a stable particular solution $\rho_{L}(t)$, $\phi_{L}(t)$ as $t\geq t_0$ such that
\begin{gather*}
\rho_{L}(t)=\rho_\ast(t)+\mathcal O\left(\mu^{\alpha}(t)\right) , \quad 
\phi_{L}(t)=\phi_\ast(t)+\mathcal O\left(\mu^{\alpha+\frac{q-n}{2}}(t)\right), \quad t\to\infty
\end{gather*}
with some $\alpha\in (0,\alpha_0)$, where
\begin{gather}
  \label{srp}
\rho_\ast(t)\equiv \rho_0+\sum_{k=1}^{N-q} \rho_k \mu^k(t), \quad 
\phi_\ast(t)\equiv \phi_0+\sum_{k=1}^{N-q}\phi_k  \mu^k(t),\\
\nonumber
\alpha_0=
	 \frac{1}{2}\min\left\{\frac{2\Re \beta_1^0}{\chi_m}, \frac{2\Re\tilde\beta_2^0}{\chi_m},2N+2+n-3q\right\},
\end{gather}
and $\rho_k$, $\phi_k$ are some constants.
  Moreover, the solution $\rho_{L}(t)$, $\phi_{L}(t)$ is asymptotically stable if $\gamma_q(t)\to\infty$ as $t\to\infty$.
\end{Lem}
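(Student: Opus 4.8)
The plan is to construct a formal particular solution of the truncated system \eqref{trsys} and then establish its stability and the stated rates by an analysis split according to the two time scales $\mu^n(t)$ and $\mu^q(t)$. First I would substitute the ansatz \eqref{srp} into \eqref{trsys}, expand $\Lambda$ and $\Omega$ about $(\rho_0,\phi_0)$ by Taylor's formula together with \eqref{tildeLO}, and collect powers of $\mu(t)$. Since $N\le m$ we have $\mu'(t)=\mathcal O(\mu^{m+1}(t))$, so $\dot\rho_\ast,\dot\phi_\ast$ do not enter the coefficients of $\mu^k(t)$ for $k\le N$: the $\mu^n$-coefficient in the first equation and the $\mu^q$-coefficient in the second vanish by \eqref{as1}, and at orders $\mu^{n+k}$ and $\mu^{q+k}$ one gets a linear system for $(\rho_k,\phi_k)$ with matrix ${\bf Y}_{n,q}(\rho_0,\phi_0,1)$, nonsingular because $\mathcal D_{n,q}\ne0$. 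Thus $\rho_k,\phi_k$ are determined for $1\le k\le N-q$, and the leftover terms produce residuals $\widetilde\Lambda:=\Lambda(\rho_\ast,\phi_\ast,S(t),t)-\dot\rho_\ast=\mathcal O(\mu^{N-q+n+1})$ and $\widetilde\Omega:=\Omega(\rho_\ast,\phi_\ast,S(t),t)-\dot\phi_\ast=\mathcal O(\mu^{N+1})$, with $(\rho_\ast(t),\phi_\ast(t))\in[\epsilon,\mathcal R-\epsilon]\times\mathbb R$ for $t$ large.

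Next, putting $\rho=\rho_\ast+u$, $\phi=\phi_\ast+v$, the linearization of \eqref{trsys} at $(\rho_\ast,\phi_\ast)$ has leading matrix ${\bf Y}_{n,q}(\rho_0,\phi_0,\mu(t))$ with eigenvalues $\mu^n\beta_1^0+\dots$ and $\mu^q\beta_2^0+\dots$. To handle stability I would perform the balancing substitution $v=\mu^{(q-n)/2}(t)\tilde v$ — it puts the two components on equal footing and, because $\tfrac{d}{dt}\mu^{-(q-n)/2}\sim-\tfrac{q-n}{2}\ell(t)$ and $\ell\mu^{-m}\to\chi_m$ by \eqref{mucond}, replaces the slow eigenvalue $\mu^q\beta_2^0$ by $\mu^q\tilde\beta_2^0$ exactly when $q=m$ — and then construct a positive quadratic form $\mathcal V(t,{\bf z})={\bf z}^{T}{\bf H}(t){\bf z}$, ${\bf z}=(u,\tilde v)^{T}$, with ${\bf H}(t)={\bf H}^{T}(t)$ uniformly positive definite and bounded, solving a Lyapunov-type equation for the balanced matrix whose right-hand side carries the same anisotropic $\mu^n$–$\mu^q$ scaling; by \eqref{mucond} (notably $\mu^{m+1}=o(\ell)$) the term ${\bf z}^{T}\dot{\bf H}(t){\bf z}$ and the cubic remainder are $o(\mu^q|{\bf z}|^2)$, so as long as the trajectory stays in a small ball (whence $\rho(t)\in[\epsilon,\mathcal R-\epsilon]$),
\[
\frac{d\mathcal V}{dt}\le-\tfrac{c}{2}\,\mu^q(t)\,\mathcal V+C\big(|\widetilde\Lambda(t)|+\mu^{-(q-n)/2}(t)\,|\widetilde\Omega(t)|\big)\sqrt{\mathcal V}.
\]
Since the forcing is integrable and, by $q<(2N+2+n)/3$, decays faster than $\mu^q$, this yields boundedness of $\mathcal V$ for initial data near $(\rho_\ast(t_0),\phi_\ast(t_0))$, hence confinement to $[\epsilon,\mathcal R-\epsilon]\times\mathbb R$ and stability of the particular solution $\rho_L=\rho_\ast+u$, $\phi_L=\phi_\ast+v$ with $u(t_0)=v(t_0)=0$; if moreover $\gamma_q(t)\to\infty$, then $\int_{t_0}^t\mu^q\,d\varsigma\to\infty$ forces $\mathcal V\to0$ along nearby trajectories, i.e.\ asymptotic stability.

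To obtain the sharp asymptotics I would refine this by a near-identity (size $\mathcal O(\mu^{q-n})$) linear change of variables block-diagonalizing the leading matrix, which reduces the residual coupling between the fast and slow modes to $o(\mu^q)$, and then integrate the two scalar equations successively. The fast equation $\dot{\tilde w}=\mu^n\lambda_n\tilde w+o(\mu^n)\tilde w+\widetilde\Lambda+\mathcal O(1)\widetilde\Omega+\mathcal O(\mu^n|{\bf z}|^2)$ with $\tilde w(t_0)=0$ gives $\tilde w(t)=\mathcal O\big(\mu^{\min\{N-q+1,\,\Re\beta_1^0/\chi_m\}}(t)\big)$ (any positive power if $\chi_m=0$) by the quasi-static balance, capped by the homogeneous decay $\exp(\Re\beta_1^0\gamma_n(t))$; substituting this into the slow equation $\dot{\tilde v}=\mu^q\tilde\beta_2^0\tilde v+o(\mu^q)\tilde v+\mathcal O(\mu^{2q-n})\tilde w+\mu^{-(q-n)/2}\widetilde\Omega+\mathcal O(\mu^n|{\bf z}|^2)$ and using the standard bound for convolution integrals against $\exp\big(\Re\tilde\beta_2^0(\gamma_q(t)-\gamma_q(\varsigma))\big)$ — which needs the forcing to decay faster than $\mu^q$, again ensured by $q<(2N+2+n)/3$, and produces $\mathcal O(\mu^{b-q}(t))$ for exponents with $b-q<\Re\tilde\beta_2^0/\chi_m$ — one arrives at $\tilde v(t)=\mathcal O(\mu^\alpha(t))$ for any $\alpha\in(0,\alpha_0)$. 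Undoing the changes of variables gives $\rho_L(t)-\rho_\ast(t)=u(t)=\mathcal O(\mu^\alpha)$ and $\phi_L(t)-\phi_\ast(t)=\mu^{(q-n)/2}(t)\tilde v(t)=\mathcal O(\mu^{\alpha+(q-n)/2})$.

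The main obstacle is the two-time-scale interplay when $q>n$: one must decouple the fast ($\mu^n$) and slow ($\mu^q$) modes up to $o(\mu^q)$ (equivalently, keep the Lyapunov matrix uniformly positive definite under the anisotropic scaling), match the rescaling so that the effective slow eigenvalue becomes $\tilde\beta_2^0$ rather than $\beta_2^0$, and — most delicately — show that the comparatively large residual $\widetilde\Lambda=\mathcal O(\mu^{N-q+n+1})$, which sits in the fast channel, does not degrade the sharp decay in the slow channel. All of this rests on the structure of $\mu(t)$ encoded in \eqref{mucond} and on the inequality $q<(2N+2+n)/3$, which guarantees $\alpha_0>0$. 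For $q=n$ the situation is much simpler: no rescaling is needed, the leading matrix is $\mu^n(t){\bf Y}_{n,n}(\rho_0,\phi_0,1)$ with a fixed Hurwitz matrix, and ${\bf H}$ may be taken constant.
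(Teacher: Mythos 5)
Your overall strategy coincides with the paper's in its two main pillars: the formal particular solution \eqref{srp} is built exactly as in the paper (matching powers of $\mu(t)$, each step solvable because $\mathcal D_{n,q}\neq 0$), and the stability analysis uses the same devices — the anisotropic balancing $v=\mu^{(q-n)/2}(t)\tilde v$, which shifts $\beta_2^0$ to $\tilde\beta_2^0$ precisely when $q=m$, and a quadratic form $B_1u^2+\tilde v^2+\mu^{(q-n)/2}B_2u\tilde v$ whose derivative is negative definite thanks to the AM--GM absorption $\mu^{(q+n)/2}|u\tilde v|\leq\tfrac12(\mu^nu^2+\mu^q\tilde v^2)$. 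The asymptotic stability via $\gamma_q(t)\to\infty$ is also the paper's argument.

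Where you genuinely diverge is the extraction of the rates $\mathcal O(\mu^{\alpha})$, $\mathcal O(\mu^{\alpha+(q-n)/2})$, and there your sketch has a gap as written. You first prove confinement ($|{\bf z}|\leq\epsilon$) and then propose to integrate the fast and slow scalar equations successively after a block-diagonalization. Two problems: (i) the quadratic remainder you place in the slow channel, $\mathcal O(\mu^n|{\bf z}|^2)$, is too coarse — with only $|{\bf z}|\leq\epsilon$ available it is $\mathcal O(\mu^n\epsilon)|{\bf z}|$, which for $n<q$ eventually dominates the stabilizing term $\mu^q\tilde\beta_2^0\tilde v$ however small $\epsilon$ is, so the convolution estimate cannot be run in that form; the correct grouping is $\mathcal O(\mu^{(q+n)/2}u^2)+\mathcal O(\mu^q|{\bf z}|^2)$ in the slow channel and $\mathcal O(\mu^n u^2)+\mathcal O(\mu^{(q+n)/2}|{\bf z}|^2)$ in the fast one. (ii) Even with the correct grouping, the linear coupling $\mu^{(q+n)/2}\xi_n$, $\mu^{(q+n)/2}\eta_q$ sits exactly at the geometric-mean order, so a naive quasi-static substitution of the fast variable into the slow equation returns a bound of the same size and the iteration does not improve; your block-diagonalization would have to be carried out explicitly, including its effect on the quadratic remainders, before the channels can be integrated independently. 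The paper sidesteps all of this with a cleaner device: it rescales the deviations \emph{before} the Lyapunov argument, setting $\rho=\rho_\ast+\mu^{\alpha}z_1$, $\phi=\phi_\ast+\mu^{\alpha+(q-n)/2}z_2$ as in \eqref{subsM}. Because both components acquire the same extra factor $\mu^{\alpha}$, the rescaled system \eqref{z1z2sys} has the same linear structure with eigenvalues shifted by $-\delta_{m,n}\alpha\chi_m$ and $-\delta_{m,q}\alpha\chi_m$ (whence the caps $\Re\beta_1^0/\chi_m$ and $\Re\tilde\beta_2^0/\chi_m$ in $\alpha_0$), the quadratic terms gain a factor $\mu^{\alpha}$, and the residual forcing becomes $\mathcal O(\mu^{N+1-\alpha-(q-n)/2})=o(\mu^q)$ by $q<(2N+2+n)/3$; then a single annulus argument with the same quadratic form $\mathcal V_\alpha$ in \eqref{Valpha} gives boundedness of $(z_1,z_2)$, which is already the desired rate. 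I recommend adopting that rescaling; your two-stage scheme can probably be repaired, but only by redoing essentially the same coupled estimate that the Lyapunov function already encodes.
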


Now, let us show that the phase locking regime associated with the solution $\rho_L(t)$, $\phi_L(t)$ of system \eqref{trsys} is preserved in the complete system \eqref{rpsi}. Consider the function
\begin{gather*}
\mathcal M_L(r,\psi,t):= \sqrt{\left(r-\rho_L(t)\right)^2 +\left(\psi-\phi_L(t)\right)^2 \mu^{n-q}(t)}
\end{gather*}
defined for all $(r,\psi)\in\mathbb R^2$ and $t\geq t_0$. 
We have the following theorem.
\begin{Th}\label{Th2}
Let system \eqref{PS} satisfy \eqref{rc}, \eqref{fgas}, \eqref{mucond}, and assumptions \eqref{asq}, \eqref{as1} hold with $n\leq q<  (2N+2+n)/3$ and $p>(q-n)/2$. If $\Re\beta_1^0<0$ and $\Re\tilde\beta_2^0<0$, then for all $\epsilon_1>0$, $\epsilon_2>0$, $l\in (0,1)$ and $t_s>t_0$ there exist $\delta_1>0$ and $\delta_2>0$ such that for any $0<\varepsilon\leq \delta_2$ the solution $r(t)$, $\psi(t)$ of system \eqref{rpsi} with initial data
$ \mathcal M_L(r(t_s),\psi(t_s),t_s)\leq \delta_1$  satisfies
\begin{gather}\label{rpst}
\mathbb P\left(\sup_{0<t-t_s\leq \mathcal T} \mathcal M_L(r(t),\psi(t),t)\geq \epsilon_1\right)\leq \epsilon_2,
\end{gather}
where
\begin{gather*}
\mathcal T=
\begin{cases}
\infty, & \text{if} \quad \mu^{2p-q+n}(t)\in L_1(t_0,\infty),\\
\mathcal T_\varepsilon, & \text{if} \quad  \mu^{2p-q+n}(t)\not\in L_1(t_0,\infty)
\end{cases}
\end{gather*}
and $\mathcal T_\varepsilon>0$ is the root of the equation
\begin{gather*}
\gamma_{2p-q+n}(\mathcal T_\varepsilon+t_s)-\gamma_{2p-q+n}(t_s)=\varepsilon^{-2(1-l)}.
\end{gather*} 
\end{Th}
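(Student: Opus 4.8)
The plan is to recenter the diffusion at the deterministic phase-locked solution $\rho_L(t)$, $\phi_L(t)$ furnished by Lemma~\ref{Lem2}, to build a Lyapunov function comparable to $\mathcal M_L^2$, to estimate the generator of \eqref{rpsi}, and to conclude by a stopping-time (supermartingale-type) inequality. Concretely, I would set $u:=r-\rho_L(t)$ and $v:=(\psi-\phi_L(t))\mu^{(n-q)/2}(t)$, so that $\mathcal M_L^2(r,\psi,t)=u^2+v^2$, and apply It\^o's formula to \eqref{rpsi}. Since $(\rho_L,\phi_L)$ solves \eqref{trsys}, the drift of $(u,v)$ is a pure increment:
\begin{gather*}
du=\big[\Lambda(r,\psi,S(t),t)-\Lambda(\rho_L,\phi_L,S(t),t)\big]\,dt+\varepsilon\langle{\bf c}_1,d{\bf w}\rangle,\\
dv=\mu^{\frac{n-q}{2}}(t)\big[\Omega(r,\psi,S(t),t)-\Omega(\rho_L,\phi_L,S(t),t)\big]\,dt+\tfrac{n-q}{2}\ell(t)\,v\,dt+\varepsilon\mu^{\frac{n-q}{2}}(t)\langle{\bf c}_2,d{\bf w}\rangle,
\end{gather*}
where $r=\rho_L+u$, $\psi=\phi_L+v\mu^{(q-n)/2}$, and ${\bf c}_i$ is the $i$-th row of ${\bf C}$ from \eqref{rpsi}, so $|{\bf c}_i|=\mathcal O(\mu^p)$ by \eqref{tildeLO}. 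Using \eqref{asq}, \eqref{as1}, the estimates $\rho_L\to\rho_0$, $\phi_L\to\phi_0$ of Lemma~\ref{Lem2}, Taylor expansion, and the bounds \eqref{tildeLO} together with the corresponding bounds for the $(r,\psi)$-derivatives of $\widetilde\Lambda_N$, $\widetilde\Omega_N$ coming from the construction in Theorem~\ref{Th1}, one sees that the drift of $(u,v)$ equals ${\bf B}(t)(u,v)^T+{\bf g}(u,v,t)$, where ${\bf B}(t)$ has eigenvalues $\mu^n(t)(\beta_1^0+o(1))$ and $\mu^q(t)(\tilde\beta_2^0+o(1))$ — here the term $\tfrac{n-q}{2}\ell(t)v$ produces, via \eqref{mucond} and the inequality $m\geq q$ (which follows from $q<(2N+2+n)/3$ and $N\leq m$), exactly the correction $-\delta_{m,q}\tfrac{(q-n)\chi_m}{2}$ of \eqref{zetah} — and ${\bf g}$ gathers the quadratic-in-$(u,v)$ terms of order $\mu^n$, the $\mu^{n+1},\dots,\mu^{N+1}$-order linear-in-$(u,v)$ remainders, and the contributions of $\widetilde\Lambda_N$, $\widetilde\Omega_N$ and $\tilde\ell$.

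Next I would construct the Lyapunov function. Because the two modes of ${\bf B}(t)$ relax on the different scales $\mu^n$ and $\mu^q$, the bare form $u^2+v^2$ need not be monotone along the deterministic flow, so — exactly as in the proof of Lemma~\ref{Lem2} — I would first diagonalize the leading linear part (a time-dependent near-identity change of variables whose time derivative contributes only harmless terms of order $\mu^{m+(q-n)/2}$) and then take $\mathcal W(u,v,t)$ to be the sum of squares of the new coordinates; for $q=n$ this reduces to the classical quadratic form obtained from the Lyapunov matrix equation. Then $c_1\mathcal M_L^2\leq\mathcal W\leq c_2\mathcal M_L^2$ for some $0<c_1\leq c_2$, and a direct computation of the generator $\mathcal L_t$ of \eqref{rpsi} (including $\partial_t$) gives
\[
\big(\partial_t+\mathcal L_t\big)\mathcal W\leq -c_0\,\mu^q(t)\,\mathcal W+C_0\,\varepsilon^2\,\mu^{2p-q+n}(t)\qquad\text{on }\ \{\mathcal M_L\leq r_0,\ t\geq t_0\}
\]
for some $r_0,c_0,C_0>0$ and all sufficiently large $t_0$. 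Here $\Re\beta_1^0<0$ and $\Re\tilde\beta_2^0<0$ provide the negative term; the assumptions $n\leq q<(2N+2+n)/3$ and $p>(q-n)/2$ guarantee that every summand of ${\bf g}$ and of the It\^o second-order term is, after an appropriately weighted Young inequality, subordinate to $c_0\mu^q\mathcal W$ once $r_0$ is small and $t_0$ large (each remainder proportional to $u^2$ carries a factor $\mu^k$ with $k>n$, each proportional to $v^2$ a factor $\mu^k$ with $k>q$, and each cross term $\mu^c|u||v|$ has $c>(n+q)/2$); and the only surviving stochastic contribution is $\varepsilon^2\mu^{2p-q+n}$, dominated by the diffusion of the $v$-component, whose coefficient carries the extra factor $\mu^{(n-q)/2}$ so that its quadratic variation is of order $\varepsilon^2\mu^{2p}\mu^{n-q}$. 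Establishing this generator estimate — in particular verifying that all remainders are subordinate under the stated inequalities on $q$, $p$, $N$ — is the main obstacle; the remaining probabilistic part is routine.

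Finally, fix $\epsilon_1\in(0,r_0]$ (the general case follows since \eqref{rpst} is monotone in $\epsilon_1$), $\epsilon_2\in(0,1)$, $l\in(0,1)$, and $t_s>t_0$, where, after possibly enlarging $t_0$, we may assume that $\rho_L(t)$ stays in a compact subinterval of $(0,\mathcal R)$ and the above inequality holds for $t\geq t_0$. Let $\tau:=\inf\{t>t_s:\mathcal M_L(r(t),\psi(t),t)\geq\epsilon_1\}$. On $[t_s,\tau]$ the solution remains in a compact set, so $\mathcal W$ and the coefficients of \eqref{rpsi} are bounded there, the stochastic integrals are genuine martingales, and It\^o's formula with the generator bound (discarding the negative term) yields, for every $t\geq t_s$,
\[
\mathbb E\,\mathcal W\big(r(t\wedge\tau),\psi(t\wedge\tau),t\wedge\tau\big)\leq c_2\delta_1^2+C_0\,\varepsilon^2\big(\gamma_{2p-q+n}(t)-\gamma_{2p-q+n}(t_s)\big)
\]
whenever $\mathcal M_L(r(t_s),\psi(t_s),t_s)\leq\delta_1$. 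Since $\mathcal W(r(\tau),\psi(\tau),\tau)\geq c_1\epsilon_1^2$ on $\{\tau\leq t\}$ by continuity, Markov's inequality gives $\mathbb P(\tau\leq t)\leq (c_1\epsilon_1^2)^{-1}\big[c_2\delta_1^2+C_0\varepsilon^2(\gamma_{2p-q+n}(t)-\gamma_{2p-q+n}(t_s))\big]$. If $\mu^{2p-q+n}\in L_1(t_0,\infty)$, then $\gamma_{2p-q+n}(t)-\gamma_{2p-q+n}(t_s)\leq G:=\int_{t_0}^\infty\mu^{2p-q+n}(\varsigma)\,d\varsigma<\infty$ for all $t$; choosing $\delta_1,\delta_2$ with $c_2\delta_1^2\leq\tfrac12 c_1\epsilon_1^2\epsilon_2$ and $C_0\delta_2^2 G\leq\tfrac12 c_1\epsilon_1^2\epsilon_2$ and letting $t\to\infty$ yields $\mathbb P(\tau<\infty)\leq\epsilon_2$, i.e. \eqref{rpst} with $\mathcal T=\infty$. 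If $\mu^{2p-q+n}\notin L_1(t_0,\infty)$, then $\gamma_{2p-q+n}(t)\to\infty$, the root $\mathcal T_\varepsilon$ of the displayed equation exists for small $\varepsilon$, and at $t=t_s+\mathcal T_\varepsilon$ the noise term equals $C_0\varepsilon^{2l}$; choosing $\delta_1,\delta_2$ with $c_2\delta_1^2\leq\tfrac12 c_1\epsilon_1^2\epsilon_2$ and $C_0\delta_2^{2l}\leq\tfrac12 c_1\epsilon_1^2\epsilon_2$ gives $\mathbb P(\tau\leq t_s+\mathcal T_\varepsilon)\leq\epsilon_2$. In both cases $\{\sup_{0<t-t_s\leq\mathcal T}\mathcal M_L(r(t),\psi(t),t)\geq\epsilon_1\}=\{\tau\leq t_s+\mathcal T\}$ since $\delta_1<\epsilon_1$ and $\mathcal M_L$ is continuous, which proves \eqref{rpst}.
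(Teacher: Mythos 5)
Your proposal is correct and follows essentially the same route as the paper: recentering at $(\rho_L,\phi_L)$ with the phase rescaled by $\mu^{(n-q)/2}$, a quadratic Lyapunov function equivalent to $\mathcal M_L^2$ whose generator satisfies $\mathfrak L\mathcal W\leq -c_0\mu^q\mathcal W+C_0\varepsilon^2\mu^{2p-q+n}$, and an exit-time estimate with the identical case split on $\mu^{2p-q+n}\in L_1$ and the same choices of $\delta_1,\delta_2,\mathcal T_\varepsilon$. The only cosmetic differences are that the paper uses the explicit quadratic form $B_{1,0}z_1^2+z_2^2+\mu^{(q-n)/2}B_{2,0}z_1z_2$ rather than diagonalizing, and packages the noise term into a deterministic compensator $\varepsilon^2\mathcal U_1(t,\mathcal T)$ so as to invoke Doob's supermartingale inequality instead of your Dynkin-formula-plus-Markov argument, which is equivalent.
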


Note that the estimates of the form \eqref{rpst} are usually associated with the concept of stability in probability (see, for example,~\cite[\S 5.3]{RH12} and \cite[Chapter II, \S 1]{HJK67}). 

The proofs of Lemmas~\ref{Lem1}, \ref{Lem2} and Theorem~\ref{Th2} are contained in Section~\ref{sec4}.

Now, consider the case when instead of \eqref{as1} the following assumption holds:
\begin{gather}\label{as2}
	\begin{split}
\exists\, \rho_0\in(0,\mathcal R): \quad & \Lambda_n(\rho_0,\phi)\equiv 0, \quad  \Xi_{n}(\phi):=\partial_r\Lambda_n(\rho_0,\phi)\not\equiv 0,\\ & \Omega_q(\rho_0,\phi)\neq 0 \quad \forall\,\phi\in\mathbb R.
	\end{split}
\end{gather}
In this case, the solutions with $\rho(t)\approx \rho_0$ may appear in the truncated system \eqref{trsys} in the phase drift mode, when the phase shift tends to infinity. In particular, we have the following lemma.
\begin{Lem}\label{Lem3}
Let assumptions \eqref{asq}, \eqref{as2} hold and $\gamma_q(t)\to \infty$ as $t\to\infty$. If $\sup_\psi\Xi_n(\psi)<\delta_{m,n}\chi_m/2$, then system \eqref{trsys} has a solution $\tilde\rho_D(t)$, $\tilde\phi_D(t)$ as $t\geq t_0$ such that 
\begin{gather*}
\tilde\rho_D(t)=\rho_0+\mathcal O\left(\mu^{\frac{1}{2}}(t)\right), \quad 
\tilde\phi_D(t) \to \infty, \quad t\to\infty
\end{gather*}
\end{Lem}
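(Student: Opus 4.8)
The plan is to obtain $\tilde\rho_D$, $\tilde\phi_D$ as a particular solution of \eqref{trsys} that is trapped, for all large $t$, in a slowly shrinking strip around the resonant radius $\rho_0$, while the phase drifts off to infinity. Fix a small constant $c>0$ and set
\[
\mathcal G_c(t):=\big\{(\rho,\phi)\in(0,\mathcal R]\times\mathbb R:\ |\rho-\rho_0|\le c\,\mu^{1/2}(t)\big\},
\]
with $c$ taken small enough that $\mathcal G_c(t)\subset(0,\mathcal R)\times\mathbb R$ for $t\ge t_0$. The core of the argument is that $\mathcal G_c(t)$ is forward invariant for \eqref{trsys} once $t\ge t_\ast$, for a suitable $t_\ast\ge t_0$. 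To see this I would use $\Lambda=\widehat\Lambda_N+\widetilde\Lambda_N$ with $\widehat\Lambda_N(\rho,\phi,t)=\sum_{k=n}^N\Lambda_k(\rho,\phi)\mu^k(t)$ and \eqref{tildeLO}, the identities $\Lambda_n(\rho_0,\phi)\equiv0$ and $\partial_r\Lambda_n(\rho_0,\phi)=\Xi_n(\phi)$ from \eqref{as2}, Taylor's formula $\Lambda_n(\rho_0+u,\phi)=\Xi_n(\phi)u+\mathcal O(u^2)$ uniformly in $\phi$, and $\mu'=\ell\mu$ with $\ell\mu^{-m}\to\chi_m$ from \eqref{mucond}, to compute, for a solution of \eqref{trsys} at any time $t\ge t_\ast$ at which $\rho(t)=\rho_0\pm c\,\mu^{1/2}(t)$,
\[
\frac{d}{dt}\Big(\rho(t)-\rho_0\mp c\,\mu^{1/2}(t)\Big)=\pm c\,\mu^{n+1/2}(t)\Big(\Xi_n(\phi(t))-\tfrac12\delta_{m,n}\chi_m+o(1)\Big),
\]
where the upper/lower signs go with the upper/lower boundary, and the $o(1)$ collects the $\mathcal O(\mu^{n+1})$ quadratic remainder, the $\mathcal O(\mu^{n+1})$ tail of $\widehat\Lambda_N$, the $\mathcal O(\mu^{N+1})$ term $\widetilde\Lambda_N$ (all $o(\mu^{n+1/2})$ since $N\ge n$) and, when $m>n$, also the boundary-drift contribution $-\tfrac c2\ell\mu^{1/2}=\mathcal O(\mu^{m+1/2})=o(\mu^{n+1/2})$; in the borderline case $m=n$ that last contribution equals $-\tfrac c2\chi_m\mu^{n+1/2}$, which is the source of the term $\tfrac12\delta_{m,n}\chi_m$. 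Since $\Xi_n(\phi)\le\sup_\psi\Xi_n(\psi)<\delta_{m,n}\chi_m/2$ uniformly in $\phi$, the bracket is uniformly negative for $t\ge t_\ast$, so this derivative is negative on the upper boundary and positive on the lower one; a standard first‑exit‑time argument then shows that any solution of \eqref{trsys} starting in $\mathcal G_c(t_\ast)$ never leaves the strip, and since it thereafter stays in a fixed compact subinterval of $(0,\mathcal R)$ on which the right‑hand side of \eqref{trsys} is uniformly bounded, it is global in forward time.

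Next I would check the phase drift. By \eqref{as2} the smooth $2\pi$-periodic function $\Omega_q(\rho_0,\cdot)$ is nowhere zero, hence of one sign; assume $\Omega_q(\rho_0,\phi)\ge\omega_0>0$ for all $\phi$ (the opposite sign is symmetric and leads to $\tilde\phi_D\to-\infty$). Shrinking $c$ further so that $\Omega_q(\rho,\phi)\ge\omega_0/2$ throughout $\mathcal G_c(t_\ast)\supset\mathcal G_c(t)$, $t\ge t_\ast$, and using \eqref{asq} to write $\widehat\Omega_N(\rho,\phi,t)=\sum_{k=q}^N\Omega_k(\rho,\phi)\mu^k(t)$ together with \eqref{tildeLO}, one obtains $\Omega(\rho,\phi,S(t),t)\ge\tfrac14\omega_0\,\mu^q(t)$ on $\mathcal G_c(t)$ for all large $t$. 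Consequently the trapped solution satisfies $\dot\phi(t)\ge\tfrac14\omega_0\mu^q(t)$, whence $\phi(t)\ge\phi(t_\ast)+\tfrac14\omega_0\big(\gamma_q(t)-\gamma_q(t_\ast)\big)\to\infty$ because $\gamma_q(t)\to\infty$.

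Putting the pieces together: with $c$ and $t_\ast\ge t_0$ fixed as above, let $\tilde\rho_D$, $\tilde\phi_D$ be the solution of \eqref{trsys} with $\tilde\rho_D(t_\ast)=\rho_0$ and $\tilde\phi_D(t_\ast)$ arbitrary. By the invariance, $|\tilde\rho_D(t)-\rho_0|\le c\,\mu^{1/2}(t)$ for $t\ge t_\ast$, i.e. $\tilde\rho_D(t)=\rho_0+\mathcal O(\mu^{1/2}(t))$, and by the phase-drift estimate $\tilde\phi_D(t)\to\infty$; this is the claimed solution (defined for $t\ge t_\ast$, which suffices since the assertions are asymptotic as $t\to\infty$).

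I expect the only genuinely delicate point to be the bookkeeping in the first step: one has to recognise that the width $c\,\mu^{1/2}(t)$ — rather than the ``natural'' $c\,\mu(t)$ — is exactly the scaling for which the leading radial term $c\,\Xi_n(\phi)\mu^{n+1/2}(t)$ dominates the higher-order drift corrections (and the boundary-drift term when $m>n$), while contributing only the additive $\tfrac12\delta_{m,n}\chi_m$ when $m=n$; this is what makes the single sign condition $\sup_\psi\Xi_n(\psi)<\delta_{m,n}\chi_m/2$ both natural and sufficient for invariance, and hence for the existence of the phase-drifting solution.
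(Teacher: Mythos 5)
Your proposal is correct and follows essentially the same route as the paper: the paper performs the explicit rescaling $\rho=\rho_0+\mu^{1/2}(t)z$ and derives the differential inequality $\frac{d|z|}{dt}\leq \mu^n(t)\left(-\xi_\ast|z|/2+C_\ast\mu^{1/2}(t)\right)$ together with $\left|\frac{d\phi}{dt}\right|\geq \mathfrak G_\ast\mu^q(t)$, which is exactly your moving-strip invariance check written in the variable $z=(\rho-\rho_0)\mu^{-1/2}$. The key computation — the $\mu^{1/2}$ width, the appearance of the boundary-drift correction $\delta_{m,n}\chi_m/2$, and the integration of the phase inequality via $\gamma_q(t)\to\infty$ — is identical in both arguments.
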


Note that assumption \eqref{as2} can be relaxed if $n>q$. In this case, $\Omega_q(\rho_0,\phi)\equiv -\kappa s_q/\varkappa$, and system \eqref{rpsi} can be further simplified by additionally averaging the amplitude equation over the fast variable $\psi(t)$. 
We have the following lemma.
\begin{Lem}\label{Lem4}
Let assumptions \eqref{asq} hold with $n>q$, $2p>q$ and $s_q\neq 0$. Then, for all $N\in [n,m]$ and $\epsilon\in (0,\mathcal R/2)$ there exist $t_1\geq t_0$ and the reversible transformation $(r,\psi)\mapsto (z,\psi)$,
\begin{align}
\label{ch3} z(t)=r(t)+\tilde E_N(r(t),\psi(t),t),
\end{align}
where $|\tilde E_N(r,\psi,t)|\leq \epsilon$ for all $r\in (0,\mathcal R]$, $\psi\in\mathbb R$, $t\geq t_1$, and $\tilde E_N(r,\psi,t)=\mathcal O(\mu^{n-q}(t))$ as $t\to\infty$ such that system \eqref{rpsi} can be transformed into
\begin{gather}\label{rpsi2}
d\begin{pmatrix}
z \\ \psi
\end{pmatrix} = \begin{pmatrix} \mathcal F(z,\psi,t) \\ 
\mathcal G(z,\psi,t)\end{pmatrix}dt+\varepsilon {\bf Z}(z,\psi,t)\, d{\bf w}(t),
\end{gather}
where  
\begin{align*}
\mathcal F(z,\psi,t)\equiv&\,\widehat{\mathcal F}_N(z,t)+ \widetilde{\mathcal F}_N(z,\psi,t), \quad  & \widehat{\mathcal F}_N(z,t)\equiv &\, \sum_{k=n}^N {\mathcal F}_k(z) \mu^k(t),\\
\mathcal G(z,\psi,t)\equiv&\, -\frac{\kappa s_q}{\varkappa} \mu^q(t) + \widetilde{\mathcal G}_q(z,\psi,t), &&
\end{align*}
and ${\bf Z}(z,\psi,t)\equiv \{\zeta_{i,j}(z,\psi,t)\}_{2\times 2}$ are defined for all $z\in (0, \mathcal R]$, $\psi\in\mathbb R$, $t\geq t_1$, are $2\pi$-periodic w.r.t. $\psi$, and
\begin{gather}
\label{tildeLO2}
\widetilde{\mathcal F}_N(z,\psi,t)=\mathcal O\left(\mu^{N+1}(t)\right),
 \quad
\widetilde{\mathcal G}_q(z,\psi,t)=\mathcal O\left(\mu^{q+1}(t)\right), 
\quad 
\zeta_{i,j}(z,\psi,t)=\mathcal O\left(\mu^{p}(t)\right)
\end{gather}
as $t\to\infty$ uniformly for all $z\in [\epsilon, \mathcal R-\epsilon]$ and $\psi\in\mathbb R$. In particular,  $\mathcal F_n(z)\equiv \left\langle \Lambda_n(z,\psi)\right\rangle_{\psi}$.
\end{Lem}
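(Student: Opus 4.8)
\textbf{Proof proposal for Lemma~\ref{Lem4}.}

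The plan is to construct the averaging transformation \eqref{ch3} by a near-identity change of the amplitude variable that kills the $\psi$-dependent part of the drift order by order in powers of $\mu(t)$, exactly as in the proof of Theorem~\ref{Th1} but now using the fast rotation of $\psi(t)$ instead of the fast rotation of $S(t)$. The key observation is that under assumption \eqref{asq} with $n>q$ we have $\Omega_k\equiv 0$ for $k<q$ and $\Omega_q(\rho,\phi)\equiv -\kappa s_q/\varkappa$ is a nonzero constant, so $\psi(t)$ is a genuinely fast phase: $d\psi=-(\kappa s_q/\varkappa)\mu^q(t)\,dt+\mathcal O(\mu^{q+1}(t))\,dt+\mathcal O(\mu^p(t))\,d{\bf w}$, and since $2p>q$ the stochastic contribution to the phase is negligible on the relevant scale. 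I would first pass to a rescaled slow time $\theta=\gamma_q(t)$ (so that $d\psi/d\theta\to -\kappa s_q/\varkappa\neq 0$), which makes the solvability of the homological equations transparent: for any $2\pi$-periodic-in-$\psi$ function $H_k(z,\psi)$ with zero $\psi$-mean, the equation $-(\kappa s_q/\varkappa)\partial_\psi E_k = H_k$ has a $2\pi$-periodic solution $E_k(z,\psi)=-(\varkappa/\kappa s_q)\int_0^\psi (H_k-\langle H_k\rangle_\psi)\,d\psi'$.

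The construction proceeds inductively on $k$ from $k=n-q$ up to $k=N-q$ (note the shift: since the amplitude drift starts at order $\mu^n$ while the phase speed is of order $\mu^q$, the natural small parameter for the amplitude equation after rescaling is $\mu^{n-q}$, which explains the claimed size $\tilde E_N=\mathcal O(\mu^{n-q}(t))$ and the regularity threshold $N\leq m$). At each step I substitute $z=r+\sum_{j} E_j(r,\psi)\mu^{?}(t)$ into \eqref{rpsi}, apply the Itô formula — here is where the $2p>q$ hypothesis and the bound $\zeta,\sigma=\mathcal O(\mu^p)$ enter, guaranteeing that the Itô correction terms $\tfrac{1}{2}\varepsilon^2\partial_r^2 E_j (\cdot)$ and the cross terms are of order $\mu^{2p}$ with $2p>q$, hence absorbed into higher-order remainders — collect the coefficient of each power of $\mu(t)$, and choose $E_k$ to solve the homological equation removing its $\psi$-dependent part, leaving $\mathcal F_{n+k}(z)=\langle(\text{that coefficient})\rangle_\psi$; the leading term gives $\mathcal F_n(z)\equiv\langle\Lambda_n(z,\psi)\rangle_\psi$. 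The derivatives of $\mu$ produced by differentiating $\mu^k(t)$ in time contribute $\ell(t)\mu^k(t)=\mathcal O(\mu^{k+m}(t))$ terms by \eqref{mucond}, which is why the process terminates at $N\leq m$ with a clean $\mathcal O(\mu^{N+1})$ remainder. The phase equation is not touched beyond verifying that the substitution changes $\mathcal G$ only by $\mathcal O(\mu^{q+1})$ and $\mathcal O(\mu^p)$-diffusion-induced terms, all consistent with $\widetilde{\mathcal G}_q=\mathcal O(\mu^{q+1})$ since $2p>q$.

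Once the formal transformation is built, I would fix $N\in[n,m]$ and $\epsilon\in(0,\mathcal R/2)$, truncate the series defining $\tilde E_N$ at order $N-q$, and choose $t_1\geq t_0$ large enough that $|\tilde E_N(r,\psi,t)|\leq\epsilon$ on $(0,\mathcal R]\times\mathbb R\times[t_1,\infty)$; reversibility of $r\mapsto z$ on $[\epsilon,\mathcal R-\epsilon]$ then follows from the implicit function theorem since $\partial_r\tilde E_N=\mathcal O(\mu^{n-q}(t))\to 0$, so $1+\partial_r\tilde E_N$ is invertible for $t\geq t_1$ (enlarging $t_1$ if necessary), and the inverse is again near-identity with the same asymptotic order. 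The smoothness and $2\pi$-periodicity in $\psi$ of all coefficients are inherited from those of $\Lambda_n,\Omega_q,\mathbf C$ and the solution formula for the homological equations. The main obstacle I anticipate is bookkeeping the interaction between the three sources of ``higher-order'' terms — the logarithmic-derivative terms $\ell(t)$ from $\partial_t\mu^k$, the Itô second-order terms of size $\mu^{2p}$, and the genuine expansion remainders of size $\mu^{N+1}$ — and checking that the hypothesis $2p>q$ (rather than the stronger $2p>n$) is exactly what is needed so that diffusion never contaminates the drift expansion below order $N+1$; this requires being careful that the homological equation at step $k$ really only needs the $\psi$-mean-zero part of a known function and that no small divisor appears, which is guaranteed precisely because $\kappa s_q/\varkappa\neq 0$ by the assumption $s_q\neq 0$.
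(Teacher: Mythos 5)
Your proposal is correct and follows essentially the same route as the paper: a near-identity change of the amplitude variable only, with homological equations $-(\kappa s_q/\varkappa)\partial_\psi e_{k}=(\text{zero-mean part})$ solvable because $\Omega_q\equiv-\kappa s_q/\varkappa\neq 0$ makes $\psi$ a fast phase, the coefficients indexed from $n-q$ to $N-q$, the It\^{o} corrections entering at order $\mu^{2p+n-q}$ (harmless since $2p>q$), the $\ell(t)$-terms controlled by $N\leq m$, and invertibility for large $t$ from the near-identity structure. The only cosmetic difference is your explicit rescaling to slow time $\gamma_q(t)$, and a slight imprecision in saying the It\^{o} terms are ``absorbed into higher-order remainders'' when in fact those of order between $n+2p-q$ and $N$ enter the homological equations and contribute to the averaged coefficients $\mathcal F_k$ — but your order-by-order prescription handles them correctly anyway.
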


This time, consider another truncated system 
\begin{gather}\label{trsys2}
\frac{d\rho}{dt}=\mathcal F(\rho,\phi,t), \quad 
\frac{d\phi}{dt}=\mathcal G(\rho,\phi,t),
\end{gather}
obtained from \eqref{rpsi2}, and assume that
\begin{gather}\label{as22}
	\begin{split}
\exists\, \rho_0\in(0,\mathcal R): \quad & \mathcal F_n(\rho_0)=0, \quad  \hat\xi_{n}:=\partial_r\mathcal F_n(\rho_0)\neq 0.
	\end{split}
\end{gather}
Then, we have the following lemma similar to Lemma~\ref{Lem3}.

\begin{Lem}\label{Lem5}
Let assumptions \eqref{asq}, \eqref{as22} hold with $n>q$, $2p>q$, $s_q\neq 0$ and $\gamma_q(t)\to \infty$ as $t\to\infty$. If $\hat \xi_n<\delta_{m,n}\chi_m/2$, then system \eqref{trsys2} has a solution $\rho_D(t)$, $\phi_D(t)$ as $t\geq t_0$ such that 
\begin{gather*}
\rho_D(t)=\rho_0+\mathcal O\left(\mu^{\frac{1}{2}}(t)\right), \quad 
\phi_D(t) =-\frac{\kappa s_q }{\varkappa}\gamma_q(t) (1+o(1)), \quad t\to\infty
\end{gather*}
\end{Lem}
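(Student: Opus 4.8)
The plan is to build the pair $\rho_D(t),\phi_D(t)$ by integrating the two equations of~\eqref{trsys2} in turn: first one traps the amplitude $\rho(t)$ in a neighbourhood of $\rho_0$ of size $\mathcal O(\mu^{1/2}(t))$ by a weighted Lyapunov argument, and then one reads off the asymptotics of the phase $\phi(t)$ directly from the explicit leading term $-\kappa s_q\varkappa^{-1}\mu^q(t)$ of $\mathcal G$. What makes the two stages separable is that, by Lemma~\ref{Lem4}, the amplitude equation depends on the drifting phase only through the remainder $\widetilde{\mathcal F}_N=\mathcal O(\mu^{N+1}(t))$, uniformly in $\psi$; so near $\rho_0$, where $\mathcal F_n(\rho_0)=0$ and $\hat\xi_n=\mathcal F_n'(\rho_0)$, a Taylor expansion gives
\[
\mathcal F(\rho,\phi,t)=\hat\xi_n(\rho-\rho_0)\mu^n(t)+\mathcal O\big((\rho-\rho_0)^2\mu^n(t)\big)+\mathcal O\big(\mu^{n+1}(t)\big)
\]
uniformly in $\phi$, so the amplitude dynamics does not see $\phi$ at the relevant orders.

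Next I would introduce $\mathcal W(t):=\mu^{-1}(t)(\rho(t)-\rho_0)^2$ and compute, along solutions of~\eqref{trsys2}, using $\mu'=\ell\mu$ and $\ell=(\chi_m+\tilde\ell)\mu^m$ with $\tilde\ell=o(1)$,
\[
\dot{\mathcal W}=\Big(2\hat\xi_n\mu^n(t)-(\chi_m+\tilde\ell(t))\mu^m(t)+\mathcal O(\mu^{n+1}(t))\Big)\mathcal W+\mathcal O\big(\mathcal W^{3/2}\mu^{n+1/2}(t)\big)+\mathcal O\big(\mathcal W^{1/2}\mu^{n+1/2}(t)\big).
\]
When $n<m$ the coefficient of $\mathcal W$ is $2\hat\xi_n\mu^n(t)(1+o(1))$, negative for large $t$ because $\hat\xi_n<0=\delta_{m,n}\chi_m/2$; when $n=m$ it is $(2\hat\xi_m-\chi_m+o(1))\mu^m(t)$, negative because $\hat\xi_m<\chi_m/2=\delta_{m,n}\chi_m/2$. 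Thus in both cases it is $\le-c\,\mu^n(t)$ for some $c>0$ and all $t$ large; this is exactly where the hypothesis $\hat\xi_n<\delta_{m,n}\chi_m/2$ is used, and it is why the scale $\mu^{1/2}(t)$ (equivalently the weight $\mu^{-1}$) is the right one.

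For the confinement, fix $K>0$; on $\{\mathcal W=K\}$ the last two terms above are $\mathcal O(\mu^{1/2}(t))\cdot\mu^n(t)$, so there is $t_\ast\ge t_0$ (and, after shrinking $\epsilon$ in advance so that $\sqrt{K\mu(t)}$ keeps $\rho$ inside the domain of validity of the expansion of the first paragraph for $t\ge t_\ast$) such that $\dot{\mathcal W}<0$ on $\{\mathcal W=K,\ t\ge t_\ast\}$; hence $\{\mathcal W\le K\}$ is forward invariant for $t\ge t_\ast$. Consequently the solution of~\eqref{trsys2} with $\rho(t_\ast)=\rho_0$, $\phi(t_\ast)=0$ stays in $[\epsilon,\mathcal R-\epsilon]\times\mathbb R$ for all $t\ge t_\ast$ and satisfies $\rho(t)=\rho_0+\mathcal O(\mu^{1/2}(t))$; this is the required $\rho_D(t)$ (for $t\ge t_0$, enlarging $t_0$ if necessary). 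Since $\rho_D(t)\to\rho_0\in(0,\mathcal R)$, the bound $\widetilde{\mathcal G}_q(\rho_D(t),\phi(t),t)=\mathcal O(\mu^{q+1}(t))$ remains valid, and integrating $\dot\phi=-\kappa s_q\varkappa^{-1}\mu^q(t)+\widetilde{\mathcal G}_q(\rho_D(t),\phi(t),t)$ gives $\phi_D(t)=-\kappa s_q\varkappa^{-1}\gamma_q(t)+\mathcal O(\gamma_{q+1}(t))$; because $\gamma_{q+1}(t)=o(\gamma_q(t))$ (by L'Hôpital, $\gamma_{q+1}'/\gamma_q'=\mu\to0$, when $\gamma_{q+1}\to\infty$, and trivially otherwise) while $\gamma_q(t)\to\infty$ and $s_q\ne0$, this is $\phi_D(t)=-\kappa s_q\varkappa^{-1}\gamma_q(t)(1+o(1))$, as claimed.

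The only genuinely delicate point is the bookkeeping of the resonant case $n=m$: one must choose the weight $\mu^{-1}$ so that the term $-\chi_m\mu^m(t)$ produced by $\mu'$ in $\dot{\mathcal W}$ shifts the effective linearization exponent from $\hat\xi_n$ to $\hat\xi_n-\chi_m/2$, which is precisely what turns $\hat\xi_n<\delta_{m,n}\chi_m/2$ into the strict negativity of the coefficient of $\mathcal W$. The remaining ingredients — the Taylor estimate, absorbing $\widetilde{\mathcal F}_N$ and the cubic term $\mathcal O(\mathcal W^{3/2}\mu^{n+1/2})$ into the negative linear part, and the estimate $\gamma_{q+1}=o(\gamma_q)$ — are routine and parallel the argument used for Lemma~\ref{Lem3}.
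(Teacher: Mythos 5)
Your proposal is correct and follows essentially the same route as the paper: rescale the amplitude deviation by $\mu^{1/2}(t)$ (your $\mathcal W=\mu^{-1}(t)(\rho-\rho_0)^2$ is just the square of the paper's variable $u$), observe that the logarithmic derivative $\ell(t)$ shifts the effective linearization coefficient from $\hat\xi_n$ to $\hat\xi_n-\delta_{m,n}\chi_m/2<0$, trap the solution by a differential inequality, and then integrate the phase equation using $\gamma_{q+1}=o(\gamma_q)$. The only deviation is that you center at $\rho_0$ rather than at the refined approximation $\rho_\star(t)=\rho_0+\sum_{k=1}^{N-n}\rho_k\mu^k(t)$ constructed in the paper, which is a harmless simplification since only $\mathcal O(\mu^{1/2}(t))$ accuracy is claimed.
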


Finally, let us show that the solutions with $r(t)\approx \rho_0$ are preserved in the stochastic system \eqref{rpsi} in the phase drift mode.
Define $\mathcal M_D(r,t):=|r-\rho_D(t)|$. Then, we have the following theorem.
\begin{Th}\label{Th3}
Let system \eqref{PS} satisfy \eqref{rc}, \eqref{fgas}, \eqref{mucond}, assumptions \eqref{asq}, \eqref{as22} hold with $n>q$, $2p>q$, $s_q\neq 0$, and $\gamma_q(t)\to\infty$ as $t\to\infty$. If $\hat\xi_n<\delta_{m,n}\chi_m/2$, then there is $t_\ast>t_0$ such that for all $\epsilon_1>0$, $\epsilon_2>0$, $l\in (0,1)$ and $t_s>t_\ast$ there exist $\delta_1>0$ and $\delta_2>0$ such that for any $0<\varepsilon\leq \delta_2$ the solution $r(t)$, $\psi(t)$ of system \eqref{rpsi} with initial data
$ \mathcal M_D(r(t_s),t_s)\leq \delta_1$, $\psi(t_s)\in\mathbb R$  satisfies
\begin{gather}\label{rpD}
\mathbb P\left(\sup_{0<t-t_s\leq \mathcal T} \mathcal M_D(r(t),t)\geq \epsilon_1\right)\leq \epsilon_2,
\end{gather}
where
\begin{gather*}
\mathcal T=
\begin{cases}
\infty, & \text{if} \quad \mu^{c}(t)\in L_1(t_0,\infty),\\
\mathcal T_\varepsilon, & \text{if} \quad  \mu^{c}(t)\not\in L_1(t_0,\infty),
\end{cases}
\end{gather*}
$c=\min\{2p,n+1\}$, and $\mathcal T_\varepsilon>0$ is the root of the equation
\begin{gather*} 
\gamma_{c}(\mathcal T_\varepsilon+t_s)-\gamma_{c}(t_s)=\varepsilon^{-2(1-l)}.
\end{gather*} 
\end{Th}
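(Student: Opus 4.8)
The plan is to prove Theorem~\ref{Th3} by constructing a suitable Lyapunov function for the complete stochastic system \eqref{rpsi} (equivalently, after the transformation of Lemma~\ref{Lem4}, for system \eqref{rpsi2}) and applying a stochastic comparison argument in the spirit of \cite{RH12,HJK67}. First I would pass from \eqref{rpsi} to \eqref{rpsi2} using Lemma~\ref{Lem4}; since the change of variables \eqref{ch3} is $\mathcal O(\mu^{n-q}(t))$ and uniformly bounded by $\epsilon$ on the relevant annulus, the event in \eqref{rpD} for $\mathcal M_D(r,t)=|r-\rho_D(t)|$ is, up to harmless constants and a shift of $\rho_D$ by the known asymptotics of Lemma~\ref{Lem5}, equivalent to the analogous event for $|z-\rho_D(t)|$ in the $(z,\psi)$ variables. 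Because the drift $\mathcal F$ and diffusion $\mathbf Z$ in \eqref{rpsi2} no longer depend on the fast phase $\psi$ at leading order (only $\widetilde{\mathcal F}_N=\mathcal O(\mu^{N+1})$ and the $\psi$-dependence of $\zeta_{i,j}$ remain), the amplitude equation for $z(t)$ is essentially self-contained, and the phase $\psi(t)$ merely drifts as described in Lemma~\ref{Lem5}; this is the structural reason the phase-drifting case is simpler than Theorem~\ref{Th2}.

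Next I would introduce the Lyapunov candidate $\mathcal V(z,t):=(z-\rho_D(t))^2$ (or a smoothed variant bounded away from the singular amplitude values) and compute its generator $\mathcal L\mathcal V$ along \eqref{rpsi2}. The drift contribution is $2(z-\rho_D(t))\big(\mathcal F(z,\psi,t)-\rho_D'(t)\big)$; expanding $\mathcal F(z,\psi,t)=\mu^n(t)\mathcal F_n(z)+\mathcal O(\mu^{n+1}(t))$ around $z=\rho_0$ and using $\mathcal F_n(\rho_0)=0$, $\partial_r\mathcal F_n(\rho_0)=\hat\xi_n$ together with $\rho_D(t)=\rho_0+\mathcal O(\mu^{1/2}(t))$ from Lemma~\ref{Lem5}, the leading term becomes $2\hat\xi_n\mu^n(t)(z-\rho_D(t))^2$ plus lower-order corrections; the subtraction of $\rho_D'(t)=\mathcal O(\mu^{n}(t)\mu'(t)/\mu(t))=\mathcal O(\mu^{n+m}(t))$ is absorbed. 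The diffusion contribution is $\varepsilon^2\sum_{i,j}\zeta_{i,j}^2(z,\psi,t)=\mathcal O(\varepsilon^2\mu^{2p}(t))$. Hence one obtains an estimate of the form
\begin{gather*}
\mathcal L\mathcal V(z,t)\leq 2\hat\xi_n\mu^n(t)\,\mathcal V(z,t)+c_1\mu^{n+1}(t)\sqrt{\mathcal V(z,t)}+c_2\varepsilon^2\mu^{2p}(t)
\end{gather*}
valid on a neighbourhood of $z=\rho_0$ for $t$ large. The condition $\hat\xi_n<\delta_{m,n}\chi_m/2$ enters precisely here: when $n=m$ and $\chi_m<0$, comparing $\mu^n(t)$ with $\mu^{n+1}(t)$ and with the time-decay of $\mu^n$ requires $2\hat\xi_n$ to dominate the logarithmic derivative contribution $n\chi_m$, which is what makes the ``drift'' part of $\mathcal L\mathcal V$ genuinely dissipative; this is the same mechanism as in Lemma~\ref{Lem5}. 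The residual term $c_2\varepsilon^2\mu^{2p}(t)$ is the noise-induced perturbation of the stability domain boundary.

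Then I would run the standard supermartingale/comparison estimate: for the stopped process one controls $\mathbb E\,\mathcal V(z(t\wedge\tau),t\wedge\tau)$ by Dynkin's formula and the above differential inequality, integrating the inhomogeneous linear ODE $\dot v=2\hat\xi_n\mu^n(t)v+c_2\varepsilon^2\mu^{2p}(t)$ (the middle term handled via Young's inequality, $c_1\mu^{n+1}\sqrt v\leq \epsilon\mu^n v+C\epsilon^{-1}\mu^{n+2}$, with $\mu^{n+2}=o(\mu^n)$). The homogeneous factor $\exp(2\hat\xi_n\gamma_n(t))$ decays (since $\gamma_n(t)\to\infty$ when $n\leq m$, which holds here as $N\leq m$ and $n\le N$; note $\gamma_q(t)\to\infty$ is assumed and $n>q$ gives $\gamma_n\le\gamma_q$ up to constants, so care is needed — actually $n>q$ means $\mu^n<\mu^q$ so $\gamma_n$ may converge, in which case one uses $c=\min\{2p,n+1\}$ and does not claim decay, only boundedness). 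Then a Chebyshev/maximal inequality of Doob type for the supermartingale part converts the moment bound into the probability estimate \eqref{rpD}, with $\delta_1$ chosen from $\epsilon_1,\epsilon_2$ and $\delta_2$ from the requirement that the accumulated noise term $\varepsilon^2(\gamma_c(t)-\gamma_c(t_s))$ stay below $\varepsilon^{2l}$, which is exactly the defining equation for $\mathcal T_\varepsilon$; when $\mu^c\in L_1$ the noise term is uniformly bounded and $\mathcal T=\infty$. The main obstacle I expect is the bookkeeping near the critical resonance order $n=m$ with $\chi_m<0$: there the decay rate of the Lyapunov function and the decay rate of the coefficient $\mu^n(t)$ are of the same logarithmic type, so the inequality $\hat\xi_n<\delta_{m,n}\chi_m/2$ must be used sharply (not merely $\hat\xi_n<0$), and one must verify that the $\mathcal O(\mu^{1/2})$-error in $\rho_D(t)$ and the $\mathcal O(\mu^{n-q})$-error in the transformation \eqref{ch3} do not destroy the sign of the leading coefficient; keeping all error terms genuinely of lower order than $\mu^n\mathcal V$ on the shrinking neighbourhood, uniformly in $\varepsilon$, is the delicate part.
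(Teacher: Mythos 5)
Your route coincides with the paper's: pass to \eqref{rpsi2} via Lemma~\ref{Lem4}, centre the amplitude at the phase-drifting solution $\rho_D(t)$ of Lemma~\ref{Lem5} (the paper sets $z=\rho_D(t)+u$ and takes $\mathfrak U_0(u)=u^2/2$, i.e.\ your $\mathcal V$ up to a factor), estimate the generator, add the deterministic compensator $\varepsilon^2C_2\left(\gamma_c(t_s+\mathcal T)-\gamma_c(t)\right)$, and conclude with Doob's inequality for the stopped nonnegative supermartingale, with the same dichotomy on $\mu^c\in L_1(t_0,\infty)$. One step, however, would fail as you have written it. The cross term $c_1\mu^{N+1}(t)\sqrt{\mathcal V}$ originates from the residual $\psi$-dependence of the drift, $\widetilde{\mathcal F}_N(\rho_D+u,\psi,t)-\widetilde{\mathcal F}_N(\rho_D,\phi_D,t)=\mathcal O(\mu^{N+1}(t))$, which does \emph{not} vanish at $u=0$, so it must be absorbed as an additive (not multiplicative) perturbation. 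Your Young split $c_1\mu^{n+1}\sqrt{v}\leq\epsilon\mu^nv+C\epsilon^{-1}\mu^{n+2}$ leaves a remainder carrying no power of the noise intensity $\varepsilon$; the compensator would then have to contain $C\epsilon^{-1}\left(\gamma_{n+2}(t_s+\mathcal T)-\gamma_{n+2}(t)\right)$ without the prefactor $\varepsilon^2$, and its value at $t=t_s$ cannot be driven below $\epsilon_2\epsilon_1^2/2$ by shrinking $\delta_1$ and $\delta_2$ when $\mu^{n+2}\notin L_1$ and $\mathcal T=\mathcal T_\varepsilon\to\infty$. The paper's split, $2\tilde C_0\mu^{N+1}|u|\leq\tilde C_0\mu^{N+1/2}\left(u^2\varepsilon^{-2}+\varepsilon^2\mu\right)\leq\mu^n|\hat\xi_n|u^2/4+\tilde C_0\varepsilon^2\mu^{n+1}$, is arranged precisely so that the additive remainder is tagged with $\varepsilon^2$ and joins the diffusion term $\mathcal O(\varepsilon^2\mu^{2p})$ in a single compensator; this is also the origin of $c=\min\{2p,n+1\}$ rather than $n+2$.

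Two smaller corrections. First, $\rho_D'(t)=\mathcal F(\rho_D(t),\phi_D(t),t)=\mathcal O(\mu^{n}(t))$, not $\mathcal O(\mu^{n+m}(t))$; this is harmless only because the quantity that actually enters is the difference $\mathcal F(z,\psi,t)-\mathcal F(\rho_D,\phi_D,t)$, which vanishes at $u=0$ up to the $\mathcal O(\mu^{N+1})$ residual discussed above. Second, the hypothesis $\hat\xi_n<\delta_{m,n}\chi_m/2$ is consumed by Lemma~\ref{Lem5}, whose ansatz carries the extra factor $\mu^{1/2}(t)$ and hence the logarithmic-derivative term $\ell(t)u/2$; in the unscaled variable $u=z-\rho_D(t)$ used in the proof of the theorem itself only $\hat\xi_n<0$ is needed, so the ``sharp'' use of the inequality you anticipate at the generator stage does not in fact occur there.
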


Note that the estimate \eqref{rpD} corresponds to partial stability in probability (see~\cite[\S 7.1.2]{VIV98}).

The proofs of Lemmas~\ref{Lem3}, \ref{Lem4}, \ref{Lem5} and Theorem~\ref{Th3} are contained in Section~\ref{sec5}.

\section{Change of variables} \label{sec3}

Applying It\^{o}'s formula (see, for example,~\cite[\S 4.2]{BO98}) to \eqref{ch1} yields the following system:
\begin{gather}\label{RPsys}
d\begin{pmatrix}
R \\ \Psi
\end{pmatrix}=
{\bf b}(R,\Psi,S(t),t)\, dt + \varepsilon{\bf B}(R,\Psi,S(t),t)\,d{\bf w}(t),
\end{gather}
where ${\bf b}(R,\Psi,S,t)\equiv (b_1(R,\Psi,S,t),b_2(R,\Psi,S,t))^T$, ${\bf B}(R,\Psi,S,t)\equiv \{\beta_{i,j}(R,\Psi,S,t)\}_{2\times 2}$, 
\begin{align*}
b_1(R,\Psi,S,t) &\equiv
 a_1\left(R,\frac{\kappa}{\varkappa}S +\Psi,S,t\right), \\
 b_2(R,\Psi,S,t) & \equiv a_2\left(R,\frac{\kappa}{\varkappa}S +\Psi,S,t\right)-\frac{\kappa}{\varkappa}S'(t), \\
\beta_{i,j}(R,\Psi,S,t) & \equiv   \alpha_{i,j}\left(R,\frac{\kappa}{\varkappa}S +\Psi,S,t\right).
\end{align*}
It follows from \eqref{fgas} that
\begin{gather}\label{FGas}\begin{split}
{\bf b}(R,\Psi,S(t),t)&= \sum_{k=1}^{M}  {\bf b}_k(R,\Psi,S(t)) \mu^{k}(t)+\tilde {\bf b}_{M}(R,\Psi,t), \\ 
{\bf B}(R,\Psi,S(t),t)&= \sum_{k=p}^{p+M} {\bf B}_k(R,\Psi,S(t))\mu^{k}(t)+\tilde {\bf B}_{p+M}(R,\Psi,t), 
\end{split}
\end{gather}
where 
\begin{gather}\label{tildeas}
\begin{split}
& |\tilde {\bf b}_{M}(R,\Psi,t)|=\mathcal O\left(\mu^{M+1}(t)\right), \quad  \|\tilde {\bf B}_{p+M}(R,\Psi,t)\|=\mathcal O\left(\mu^{p+M+1}(t)\right)
\end{split}
\end{gather}
as  $t\to\infty$ uniformly for all $R\in(0,\mathcal R]$, $\Psi\in\mathbb R$ and for any integer $M\geq 1$. The coefficients ${\bf b}_k(R,\Psi,S)\equiv (b_{1,k}(R,\Psi,S),b_{2,k}(R,\Psi,S))^T$ and ${\bf B}_k(R,\Psi,S)\equiv \{\beta_{i,j,k}(R,\Psi,S)\}_{2\times 2}$ are $2\pi$-periodic in $\Psi$ and $2\pi \varkappa$-periodic in $S$.  In particular, $b_{1,k}(R,\Psi,S) \equiv 0$ and $b_{2,k}(R,\Psi,S)\equiv - \kappa s_{k}/\varkappa$ for $k<n$.

Let us show that system \eqref{RPsys} can be simplified by averaging the drift terms of equations. Such method is effective in the study of similar deterministic systems with a small parameter~\cite{BM61,Hap93}.
It is readily seen that the limiting system corresponding to \eqref{RPsys} has the following form:
\begin{gather*}
\frac{d\hat R}{dt}=0, \quad \frac{d\hat\Psi}{dt}=0, \quad \frac{d\hat S}{dt}=s_0.
\end{gather*}
Hence, the phase of the perturbations $S(t)$ can be used as a fast variable as $t\to\infty$. The averaging transformation is constructed in the following form:
\begin{gather}\label{rpch}
\begin{split}
	U_N(R,\Psi,S,t)&=R+\sum_{k=1}^N  u_k(R,\Psi,S) \mu^{k}(t), \\
	V_N(R,\Psi,S,t)&=\Psi+\sum_{k=1}^N v_k(R,\Psi,S) \mu^{k}(t)
\end{split}
\end{gather}
with some integer $N\in [n,m]$ and some functions $u_k(R,\Psi,S)$ and $v_k(R,\Psi,S)$, which are assumed to be periodic with respect to $\Psi$ and $S$. These coefficients are chosen in such that the system written in the new variables 
$
r(t)\equiv U_N(R(t),\Psi(t),S(t),t)$, $
\psi(t)\equiv V_N(R(t),\Psi(t),S(t),t)
$
takes the form \eqref{rpsi}, where the drift terms do not depend on $S(t)$ in the first $N$ terms of the asymptotics as $t\to\infty$. 

Define the operators
\begin{gather}\nonumber
\mathcal L U:= 	 \partial_t U  + \left(\nabla_{(R,\Psi)} U\right)^T {\bf b}+\frac{\varepsilon^2}{2}{\hbox{\rm tr}}\left({\bf B}^T {\bf H}_{(R,\Psi)}(U){\bf B}\right), \\ 
\label{HJdef}
\nabla_{(R,\Psi)} U :=  	\begin{pmatrix}\partial_{R} U  \\  \partial_{\Psi} U \end{pmatrix}, \quad 
	{\bf H}_{(R,\Psi)}(U):=  		
\begin{pmatrix}
		\partial_{R}^2 U & \partial_{R}\partial_{\Psi} U \\
		\partial_{\Psi}\partial_{R} U & \partial_{\Psi}^2 U
\end{pmatrix}, \quad
{\bf J}_{(R,\Psi)}(U,V):=  		
\begin{pmatrix}
		\partial_{R} U & \partial_{\Psi} U \\
		\partial_{R} V & \partial_{\Psi} V
\end{pmatrix}
\end{gather}
for any smooth functions $U(R,\Psi,t)$ and $V(R,\Psi,t)$. Note that $\mathcal L$ is the generator of the process defined by system \eqref{RPsys} (see, for example,~\cite[\S 3.3]{RH12}). By applying It\^{o}'s formula, we get
\begin{equation}\label{Itorhopsi}
\begin{array}{rcl}
d \begin{pmatrix} r \\ \psi \end{pmatrix} &=&\displaystyle \begin{pmatrix} \mathcal L  U_N( R,\Psi,S(t),t) \\ \mathcal L  V_N( R,\Psi,S(t),t) \end{pmatrix}  dt \\
&&\\
&&+ \varepsilon {\bf J}_{(R,\Psi)}(U_N( R,\Psi,S(t),t),V_N( R,\Psi,S(t),t)) {\bf B}( R,\Psi,S(t),t) \, d{\bf w}(t).
\end{array}
\end{equation}
Taking into account \eqref{FGas} with $M=N$, we see that the drift term in \eqref{Itorhopsi} takes the form
\begin{gather}\label{drpas}
\begin{split}
\begin{pmatrix} \mathcal L U_N  \\ \mathcal L V_N \end{pmatrix} = &
\sum_{k=1}^N \mu^k(t)\left\{{\bf b}_k+s_0\partial_S \begin{pmatrix} u_k \\ v_k \end{pmatrix}\right\} + 
\sum_{k=2}^N \mu^k(t) \sum_{j=1}^{k-1}\left\{ b_{1,j}\partial_R + b_{2,j}\partial_\Psi+s_{j} \partial_S \right\}\begin{pmatrix} u_{k-j} \\ v_{k-j} \end{pmatrix}\\
 & +\frac{\varepsilon^2}{2}\sum_{k=2p+1}^N \mu^k(t) \sum_{
        \substack{
             i+j+l=k\\
            i\geq p, j\geq p, l\geq 1
                }
            } 
\begin{pmatrix}
{\hbox{\rm tr}}\left \{{\bf B}^T_{i} {\bf H}_{(R,\Psi)}(u_{l} ){\bf B}_{j}\right\} \\ 
{\hbox{\rm tr}}\left\{{\bf B}^T_{i} {\bf H}_{(R,\Psi)}(v_{l} ){\bf B}_{j}\right\} \end{pmatrix} + \tilde{\bf z}_N(R,\Psi,t),
\end{split}
\end{gather}
where  
\begin{gather}\label{zpas}\begin{split}
\tilde{\bf z}_N(R,\Psi,t)\equiv & \, \tilde {\bf b}_N(R,\Psi,t)+
\sum_{k=N+1}^{2N} \mu^k(t) \sum_{j=1}^{k-1}\left\{ b_{1,j}\partial_R + b_{2,j}\partial_\Psi+s_j \partial_S \right\}\begin{pmatrix} u_{k-j} \\ v_{k-j} \end{pmatrix}\\
 & +\frac{\varepsilon^2}{2}\sum_{k= N+1}^{2p+3N} \mu^k(t) \sum_{
        \substack{
            i+j+l=k\\
            i\geq p, j\geq p, l\geq 1
                }
            } 
\begin{pmatrix}
{\hbox{\rm tr}}\left ({\bf B}^T_{i} {\bf H}_{(R,\Psi)}(u_{l} ){\bf B}_{j}\right) \\ 
{\hbox{\rm tr}}\left({\bf B}^T_{i} {\bf H}_{(R,\Psi)}(v_{l} ){\bf B}_{j}\right) \end{pmatrix}\\
 & +\frac{\varepsilon^2}{2}\sum_{k= 1}^{N} \mu^k(t) 
\begin{pmatrix}
{\hbox{\rm tr}}\left (\tilde {\bf B}^T_{p+N} {\bf H}_{(R,\Psi)}(u_{k} )\tilde{\bf B}_{p+N}\right) \\ 
{\hbox{\rm tr}}\left(\tilde{\bf B}^T_{p+N} {\bf H}_{(R,\Psi)}(v_{k} )\tilde{\bf B}_{p+N}\right) \end{pmatrix} \\
& +\sum_{k=1}^N \mu^k(t) \left(k\ell(t)+\tilde b_{1,N}\partial_R+\tilde b_{2,N}\partial_\Psi\right)\begin{pmatrix} u_{k} \\ v_{k} \end{pmatrix}.
\end{split}
\end{gather}
Combining \eqref{zpas} with \eqref{mucond}, \eqref{FGas} and \eqref{tildeas}, we get
\begin{gather}\label{tildeZas}
|\tilde {\bf z}_{N}(R,\Psi,t)|=\mathcal O\left(\mu^{N+1}(t)\right)
\end{gather}
 as $t\to\infty$ uniformly for all $R\in (0,\mathcal R]$ and $\Psi\in\mathbb R$. It is assumed that $u_k(R,\Psi,S)\equiv v_k(R,\Psi,S)\equiv 0$ for $k\leq 0$ and $k>N$. Comparing the drift terms of system \eqref{rpsi} with \eqref{drpas}, we obtain
\begin{gather}\label{ukvk}
s_0\partial_S \begin{pmatrix} u_k \\ v_k \end{pmatrix}=\begin{pmatrix} \Lambda_k(R,\Psi) \\ \Omega_k(R,\Psi)\end{pmatrix}-{\bf b}_k(R,\Psi,S)+\tilde{\bf h}_k(R,\Psi,S), \quad k=1,\dots, N,
\end{gather} 
where each $\tilde {\bf h}_k(R,\Psi,S)\equiv (\tilde h_{1,k}(R,\Psi,S),\tilde h_{2,k}(R,\Psi,S))^T$ is expressed in terms of $\{u_{j}, v_{j},\Lambda_{j}, \Omega_{j}\}_{j=1}^{k-1}$. In particular, if $p=n=1$, then
\begin{equation*}
\begin{array}{rl}
\tilde {\bf h}_1 \equiv 
& \begin{pmatrix}
0\\ 0
\end{pmatrix},\\
\tilde {\bf h}_2 \equiv 
& 
(u_1\partial_R+v_1\partial_\Psi)
\begin{pmatrix}
\Lambda_1\\ \Omega_1
\end{pmatrix}
-(b_{1,1}\partial_R + b_{2,1}\partial_\Psi )\begin{pmatrix} u_{1} \\ v_{1} \end{pmatrix}, \\
\tilde {\bf h}_3 \equiv
& \displaystyle
\sum_{i+j=3}(u_i\partial_R+v_i\partial_\Psi)
\begin{pmatrix}
\Lambda_j\\ \Omega_j
\end{pmatrix} + \frac{1}{2}\left(u_1^2\partial_R^2+2u_1v_1 \partial_R\partial_\Psi+v_1^2 \partial_\Psi^2\right)\begin{pmatrix}
\Lambda_1\\ \Omega_1
\end{pmatrix} \\
& \displaystyle -\sum_{j=1}^2 \left( b_{1,j}\partial_R + b_{2,j}\partial_\Psi+s_j\partial_S\right)\begin{pmatrix} u_{3-j} \\ v_{3-j} \end{pmatrix} -\frac{\varepsilon^2}{2}\begin{pmatrix}
{\hbox{\rm tr}}\left ({\bf B}^T_{1} {\bf H}_{(R,\Psi)}(u_{1} ){\bf B}_{1}\right) \\ 
{\hbox{\rm tr}}\left({\bf B}^T_{1} {\bf H}_{(R,\Psi)}(v_{1} ){\bf B}_{1}\right) \end{pmatrix}.
\end{array}
\end{equation*}
Let us define 
\begin{gather*}
\begin{pmatrix}
\Lambda_k(R,\Psi) \\
\Omega_k(R,\Psi)
\end{pmatrix} \equiv \left\langle {\bf b}_k(R,\Psi,S)-\tilde {\bf h}_k(R,\Psi,S)\right\rangle_{\varkappa S}.
\end{gather*}
In this case, the right-hand side of \eqref{ukvk} has zero mean. Hence, system \eqref{ukvk} is solvable in the class of functions that are $2\pi\varkappa$-periodic in $S$ with zero mean $\langle u_k(R,\Psi,S)\rangle_{ \varkappa S}=\langle v_k(R,\Psi,S)\rangle_{ \varkappa S}=0$.
By induction, we can see that the functions $\Lambda_k(R,\Psi)$ and $\Omega_k(R,\Psi)$ are $2\pi$-periodic in $\Psi$. It can easily be checked that 
\begin{align*}
&\Lambda_k(R,\Psi)\equiv 0, \quad &\Omega_k(R,\Psi)&\equiv \langle b_{2,k}(R,\Psi,S)\rangle_{\varkappa S}\equiv -\frac{\kappa s_k}{\varkappa}, \quad k< n,\\
&\Lambda_n(R,\Psi)\equiv \langle b_{1,n}(R,\Psi,S)\rangle_{\varkappa S}, \quad &\Omega_n(R,\Psi)&\equiv \langle b_{2,n}(R,\Psi,S)\rangle_{\varkappa S}.
\end{align*}
From \eqref{rpch} it follows that for all $\epsilon\in (0,\mathcal R/2)$ there exists $t_0\geq \tau_0$ such that
\begin{gather*}
|U_N(R,\Psi,S,t)-R|< \epsilon, \quad 
|V_N(R,\Psi,S,t)-\Psi|< \epsilon, \quad 
|D_N(R,\Psi,S,t)-1|< \epsilon
\end{gather*}
for all $R\in(0,\mathcal R]$, $(\Psi,S)\in\mathbb R^2$ and $t\geq t_0$, where 
\begin{gather*}
D_N(R,\Psi,S,t):={\hbox{\rm det }} {\bf J}_{(R,\Psi)}(U_N( R,\Psi,t),V_N( R,\Psi,t)).
\end{gather*} 
Hence, there exists the inverse transformation $R= u(r,\psi,t)$, $\Psi=v(r,\psi,t)$ such that $0<u(r,\psi,t)<\mathcal R$ for all $r\in [\epsilon,\mathcal R-\epsilon]$, $\psi\in\mathbb R$ and $t\geq t_0$.
Define
\begin{align}\nonumber
\begin{pmatrix}
\tilde \Lambda_N(r,\psi,t)\\
\tilde \Omega_N(r,\psi,t)
\end{pmatrix}
\equiv & \,
\begin{pmatrix}
\mathcal L U_N (R,\Psi,S(t),t)\\
\mathcal L V_N(R,\Psi,S(t),t)
\end{pmatrix}\Big|_{\substack{R=u(r, \psi, t)  \\ \Psi=v(r,\psi,t)}}  - \sum_{k=1}^N  \begin{pmatrix}
\Lambda_k(r,\psi)\\
\Omega_k(r,\psi)
\end{pmatrix} \mu^k(t), \\
\label{Cform}
{\bf C}(r,\psi,S(t),t)\equiv & \,
{\bf J}_{(R,\Psi)} (U_N( R,\Psi,S(t),t),V_N( R,\Psi,S(t),t)) {\bf B}(R,\Psi,S(t),t)\Big|_{\substack{ R=u(r, \psi, t) \\  \Psi=v(r,\psi,t)}}.
\end{align}
Combining this with \eqref{drpas}, \eqref{tildeZas} and \eqref{FGas}, we get estimates \eqref{tildeLO}.

Define $\tilde U_N(R,\Psi,t)\equiv  U_N(R,\Psi,S(t),t)-R$, $\tilde V_N(R,\Psi,t)\equiv  V_N(R,\Psi,S(t),t)-\Psi$. Then, we get \eqref{tildeUNVN}. 
This completes the proof of Theorem~\ref{Th1}.

\section{Analysis of the phase locking mode}
\label{sec4}

\begin{proof}[Proof of Lemma~\ref{Lem1}]
Let $q=n$. The change of variables $\hat\rho(t)=\hat z_1(\gamma_n(t))$, $\hat\psi(t)=\hat z_2(\gamma_n(t))$ transforms the limiting system \eqref{limsys} into the following autonomous system: 
\begin{gather*}
\frac{d\hat z_1}{d\gamma}= \Lambda_n(\hat z_1,\hat z_2), \quad \frac{d\hat z_2}{d\gamma}= \Omega_q(\hat z_1,\hat z_2).
\end{gather*}
By applying Lyapunov's indirect method (see, for example,~\cite[Theorem 4.7]{HKH}), we obtain the result of the lemma.  

Now, let $q>n$. Substituting 
\begin{gather*}
\hat\rho(t)=\rho_0+z_1(t),\quad 
\hat\phi(t)=\phi_0+\mu^{\frac{q-n}{2}}(t)z_2(t)
\end{gather*} 
into \eqref{limsys}, we obtain
\begin{gather}\label{zsys}
 \frac{d{\bf z}}{dt}= {\bf f}({\bf z},t), 
\end{gather}
where ${\bf z}=(z_1,z_2)^T$, ${\bf f}({\bf z},t)=(f_1({\bf z},t),f_2({\bf z},t))^T$, 
\begin{align*}
f_1({\bf z},t) \equiv &\,  
\mu^n(t)\Lambda_n\left(\rho_0+z_1,\phi_0+\mu^{\frac{q-n}{2}}(t) z_2\right), \\ 
f_2({\bf z},t) \equiv &\, \mu^{\frac{q+n}{2}}(t)\Omega_q\left(\rho_0+z_1,\phi_0+\mu^{\frac{q-n}{2}}(t) z_2\right) -\ell(t)\frac{(q-n)z_2}{2}.
\end{align*}
It follows that 
\begin{align*}
f_1({\bf z},t) = &\,  
\mu^n(t)  \lambda_n z_1 \left(1 +\mathcal O(z_1 ) \right)+  \mu^{\frac{q+n}{2}}(t) \left( \xi_n z_2 + \mathcal O\left(|{\bf z}|^2\right)\right), \\ 
f_2({\bf z},t) = &\, \mu^{\frac{q+n}{2}}(t)\eta_q z_1 \left( 1+\mathcal O(z_1)\right) +\mu^{q}(t)\left( \tilde\omega_q z_2 \left(1+\mathcal O(\tilde\ell(t))+\mathcal O(\mu(t))\right)+ \mathcal O\left(|{\bf z}|^2\right)\right)
\end{align*}
as $|{\bf z}|=\sqrt{z_1^2+z_2^2}\to 0$ and $t\to\infty$, where
$\tilde \omega_{q}:=\omega_q-\delta_{m,q}{(q-n)\chi_m}/{2}$.
Consider 
\begin{gather*}
\mathcal V(z_1,z_2,t)= B_1 z_1^2+ z_2^2+ \mu^{\frac{q-n}{2}}(t) B_2 z_1z_2 
\end{gather*}
with some parameters $B_1$ and $B_2$ as a Lyapunov function candidate for system \eqref{zsys}. The derivative of $\mathcal V(z_1,z_2,t)$ along the 
trajectories of system \eqref{zsys} is given by 
\begin{align*}
\frac{d\mathcal V}{dt}=& \, 
\mu^n(t)z_1^2\left(2B_1 \lambda_n  +\mathcal O\left(|{\bf z}|\right)+\mathcal O(\mu(t))\right) \\
 & + \mu^q(t) z_2^2 \left( B_2 \xi_n +2\tilde \omega_q+\mathcal O(z_2)+\mathcal O\left(\mu(t) +|\tilde \ell(t)|\right)  \right)  \\
 & + \mu^{\frac{q+n}{2}}(t)z_1z_2\left(2B_1\xi_n+2 \eta_q+B_2\lambda_n +\mathcal O\left(|{\bf z}|\right)+\mathcal O\left(\mu(t) +|\tilde \ell(t)|\right) \right) 
\end{align*}
as $|{\bf z}|\to 0$ and $t\to\infty$. Let us take   
\begin{gather*}
B_1=
\begin{cases}
\lambda_n \tilde\omega_q, & \text{if} \quad \xi_n=0,\\
\displaystyle \frac{\lambda_n \tilde \beta_2^0}{2\xi_n^2}, & \text{if} \quad \xi_n\neq 0,
\end{cases}
\quad  
B_2=-\frac{2 B_1\xi_n+2\eta_q}{\lambda_n}.
\end{gather*}
Then,
\begin{gather*}
\frac{d\mathcal V}{dt} = 2B_1 \beta_1^0  \left( \mu^n(t) z_1^2 + \mu^q(t) (\xi_n^2+\delta_{\xi_n,0})\lambda_n^{-2}z_2^2 \right) \left (1+\mathcal O\left(|{\bf z}|\right)+\mathcal O\left(\mu(t)+|\tilde \ell(t)|\right) \right)  
\end{gather*}
as $|{\bf z}|\to 0$ and $t\to\infty$. 
Note that $B_1>0$ if $\beta_1^0 \tilde \beta_2^0>0$. Therefore, there exist $\Delta_1>0$ and $t_1\geq t_0$ such that
\begin{gather*}
B_- |{\bf z}|^2\leq \mathcal V(z_1,z_2,t)\leq B_+  |{\bf z}|^2
\end{gather*}
and
\begin{align}
\label{vleq} \frac{d\mathcal V}{dt} & \leq -C_0\mu^q(t)  \mathcal V, & \text{if} & \quad \beta_1^0<0, \quad \tilde \beta_2^0<0,\\
\label{vgeq} \frac{d\mathcal V}{dt} & \geq  C_0\mu^q(t)  \mathcal V, & \text{if}& \quad \beta_1^0>0, \quad \tilde \beta_2^0>0
\end{align}
for all $|{\bf z}|\leq \Delta_1$ and $t\geq t_1$ with  $B_+=2\max\{1,B_{1}\}>0$, $B_-=\min\{1,B_{1}\}/2>0$ and $C_0=B_1|\beta_1^0|\min\{1,(\xi_n^2+\delta_{\xi_n,0})\lambda_n^{-2}\}/B_+>0$. 

Let $\beta_1^0<0$ and $\tilde\beta_2^0<0$, then for all $\epsilon\in (0,\Delta_1)$ there exits $\delta_\epsilon=(\epsilon/2)\sqrt{B_-/B_+}$ such that 
\begin{gather*} 
\sup_{|{\bf z}|\leq \delta_\epsilon}\mathcal V(z_1,z_2,t)\leq B_+ \delta_\epsilon^2 < B_-\epsilon^2=\inf_{|{\bf z}|=\epsilon}\mathcal V(z_1,z_2,t).
\end{gather*}
Hence, any solution $(z_1(t),z_2(t))$ of system \eqref{zsys} with initial data $|{\bf z}(t_s)|\leq \delta_\epsilon$ with any $t_s\geq t_1$ cannot exit from the domain $|{\bf z}|\leq\epsilon$ as $t> t_s$. Thus, the equilibrium is stable. Moreover, integrating \eqref{vleq}, we obtain
\begin{gather*}
\left|{\bf z}(t)\right|\leq C_+ \exp\left(-\frac{C_0}{2} (\gamma_q(t)-\gamma_q(t_s))\right), \quad t\geq t_s
\end{gather*}
with some $C_+>0$. It follows that $|{\bf z}(t)|\to 0$ as $t\to\infty$. In this case, the equilibrium $(0,0)$ is asymptotically stable.

Let $\beta_1^0>0$ and $\tilde\beta_2^0>0$, then integrating \eqref{vgeq} as $t\geq t_1$, we get
\begin{gather*}
\left|{\bf z}(t)\right|\geq C_- \exp\left( \frac{C_0}{2} (\gamma_n(t)-\gamma_n(t_1))\right) 
\end{gather*}
with some $C_->0$. In this case, the solution is unstable. Returning to the variables $(\rho,\phi)$, we obtain the stability or instability of the equilibrium $(\rho_0,\phi_0)$ in system \eqref{limsys}.
\end{proof}

\begin{proof}[Proof of Lemma~\ref{Lem2}]
Consider the case $q>n$. The proof in the case $q=n$ is similar.  Substituting \eqref{srp} into \eqref{trsys} and equating the terms of like powers of $\mu(t)$, we obtain
\begin{gather}\label{rpk}
{\bf Y}_{n,q}(\rho_0,\phi_0,1)
\begin{pmatrix} 
 \rho_k \\ \phi_k 
\end{pmatrix} 
= 
\begin{pmatrix} 
 F_k \\ G_k 
\end{pmatrix}, \quad k=1,\dots,N-q,
\end{gather}
where $F_k$, $G_k$ are expressed through $\rho_0,\phi_0, \dots, \rho_{k-1}, \phi_{k-1}$. For instance,
\begin{align*}
\begin{pmatrix} F_1\\ G_1 \end{pmatrix} = &- \begin{pmatrix}\Lambda_{n+1}(\rho_0,\phi_0) \\  \Omega_{q+1}(\rho_0,\phi_0) \end{pmatrix}, \\
\begin{pmatrix} F_2\\ G_2 \end{pmatrix} = 
	&- \begin{pmatrix}\Lambda_{n+2}(\rho_0,\phi_0) \\  \Omega_{q+2}(\rho_0,\phi_0) \end{pmatrix} 
	-\left(\rho_1\partial_\rho +\phi_1 \partial_\phi\right) \begin{pmatrix}\Lambda_{n+1}(\rho_0,\phi_0) \\  \Omega_{q+1}(\rho_0,\phi_0) \end{pmatrix} \\
	& -\frac{1}{2}\left(\rho_1^2\partial^2_\rho+2\rho_1\phi_1\partial_\rho\partial_\phi+\phi_1^2\partial^2_\phi\right)\begin{pmatrix}\Lambda_{n}(\rho_0,\phi_0) \\  \Omega_{q}(\rho_0,\phi_0) \end{pmatrix},\\
	\begin{pmatrix} F_3\\ G_3 \end{pmatrix} = 
	&- \begin{pmatrix}\Lambda_{n+3}(\rho_0,\phi_0) \\  \Omega_{q+3}(\rho_0,\phi_0) \end{pmatrix} 
	-\sum_{i+j=3}\left(\rho_i\partial_\rho +\phi_i \partial_\phi\right) \begin{pmatrix}\Lambda_{n+j}(\rho_0,\phi_0) \\  \Omega_{q+j}(\rho_0,\phi_0) \end{pmatrix} \\
	& -\frac{1}{6}\left(\rho_1^3\partial^3_\rho+3\rho_1^2\phi_1\partial_\rho^2\partial_\phi+3\rho_1\phi_1^2\partial_\rho\partial_\phi^2+\phi_1^3\partial^3_\phi\right)\begin{pmatrix}\Lambda_{n}(\rho_0,\phi_0) \\  \Omega_{q}(\rho_0,\phi_0) \end{pmatrix}.
\end{align*}
Since $\mathcal D_{n,q}\neq 0$, we see that system \eqref{rpk} is solvable. By construction, 
\begin{gather*}
		\begin{split}
&\mathcal R_\rho(t)\equiv \frac{d\rho_{\ast}(t)}{dt}-\Lambda(\rho_{\ast}(t),\phi_{\ast}(t),S(t),t)=\mathcal O\left(\mu^{N+1}(t)\right),\\
&\mathcal R_\phi(t)\equiv \frac{d\phi_{\ast}(t)}{dt}-\Omega(\rho_{\ast}(t),\phi_{\ast}(t),S(t),t)=\mathcal O\left(\mu^{N+1}(t)\right), \quad t\to\infty.
\end{split}
\end{gather*}
Consider the change of variables
\begin{gather} \label{subsM}
\rho(t)=\rho_{\ast}(t)+\mu^{\alpha}(t) z_1(t), \quad 
\phi(t)=\phi_{\ast}(t)+\mu^{\alpha+\frac{q-n}{2}}(t) z_2(t).
\end{gather}
Substituting \eqref{subsM} into \eqref{trsys}, we obtain  
\begin{gather}\label{z1z2sys}\begin{split}
\frac{dz_1}{dt}&={\mathcal F}(z_1,z_2,t) +\widetilde{\mathcal F}(z_1,z_2,t), \\ 
\frac{dz_2}{dt}&={\mathcal G}(z_1,z_2,t) +\widetilde{\mathcal G}(z_1,z_2,t)
\end{split}
\end{gather}
with  
\begin{gather*}
\begin{split}
{\mathcal F}(z_1,z_2,t)\equiv 
& \, 
\mu^{-\alpha}(t)
\left(
\widehat\Lambda\left(\rho_{\ast}(t)+ \mu^{\alpha}(t) z_1,\phi_{\ast}(t)+\mu^{\alpha+\frac{q-n}{2}}(t) z_2,t\right)
-\widehat\Lambda\left(\rho_\ast(t),\phi_\ast(t),t\right) 
\right) \\ 
& -\alpha  \ell(t) z_1, \\
{\mathcal G}(z_1,z_2,t)\equiv &\, \mu^{-\alpha-\frac{q-n}{2}}(t)\left(\widehat\Omega\left(\rho_{\ast}(t)+ \mu^{\alpha}(t) z_1,\phi_{\ast}(t)+\mu^{\alpha+\frac{q-n}{2}}(t) z_2,t\right)-\widehat\Omega\left(\rho_\ast(t),\phi_\ast(t),t\right) \right) \\
 & -\left(\alpha +\frac{q-n}{2}\right)\ell(t) z_2.\\
\widetilde{\mathcal F}(z_1,z_2,t)\equiv & \, 
\mu^{-\alpha}(t)
\left(
\widetilde\Lambda\left(\rho_{\ast}(t)+ \mu^{\alpha}(t) z_1,\phi_{\ast}(t)+\mu^{\alpha+\frac{q-n}{2}}(t) z_2, S(t),t\right)
-\widetilde\Lambda\left(\rho_\ast(t),\phi_\ast(t),S(t),t\right) 
\right) \\ 
& -\mu^{-\alpha}(t) \mathcal R_\varrho(t),\\
\widetilde{\mathcal G}(z_1,z_2,t)\equiv  & \mu^{-\alpha-\frac{q-n}{2}}(t)\left(\widetilde\Omega\left(\rho_{\ast}(t)+ \mu^{\alpha}(t) z_1,\phi_{\ast}(t)+\mu^{\alpha+\frac{q-n}{2}}(t) z_2, S(t),t\right)-\widetilde\Omega\left(\rho_\ast(t),\phi_\ast(t),S(t),t\right) \right) \\
& -\mu^{-\alpha-\frac{q-n}{2}}(t)\mathcal R_\phi(t).
\end{split}
\end{gather*}
Define
\begin{gather*}
\tilde \omega_{q,\alpha}:=\omega_q-\delta_{m,q}\chi_m \left(\alpha+\frac{ q-n }{2}\right), \quad \tilde \beta_{2,\alpha}^0:=\tilde \beta_2^0-\delta_{m,q}\chi_m \alpha.
\end{gather*}
It follows that 
\begin{align*}
{\mathcal F}(z_1,z_2,t) = &\,  
\left\{\mu^n(t)  \lambda_n z_1 \left(1 +\mathcal O(z_1 ) \right)+  \mu^{\frac{q+n}{2}}(t) \left(\xi_n z_2 + \mathcal O\left(|{\bf z}|^2\right)\right)\right\}\left(1+\mathcal O(\mu(t))\right), \\ 
{\mathcal G}(z_1,z_2,t) = &\, \left\{\mu^{\frac{q+n}{2}}(t)\eta_q z_1 \left( 1+\mathcal O(z_1)\right) +\mu^{q}(t)\left( \tilde\omega_{q,\alpha} z_2 \left(1+\mathcal O(\tilde\ell(t))+\mathcal O(\mu(t))\right)+ \mathcal O\left(|{\bf z}|^2\right)\right)\right\} \\
& \times\left(1+\mathcal O(\mu(t))\right),\\
\widetilde{\mathcal F}(z_1,z_2,t) = &\,  \mathcal O\left(\mu^{N+1-\alpha}(t)\right),\\
\widetilde{\mathcal G}(z_1,z_2,t) = &\,  \mathcal O\left(\mu^{N+1-\alpha-\frac{q-n}{2}}(t)\right)
\end{align*}
as $|{\bf z}|\to 0$ and $t\to\infty$.  Note that by choosing $\alpha \in(0,\alpha_0)$, we can ensure that $N+1-\alpha-(q-n)/2>q$ and $\tilde \beta_{2,\alpha}^0<0$. We use 
\begin{gather}\label{Valpha}
\mathcal V_\alpha(z_1,z_2,t)= B_{1,\alpha} z_1^2+ z_2^2+ \mu^{\frac{q-n}{2}}(t) B_{2,\alpha} z_1z_2 
\end{gather}
 as a Lyapunov function candidate for system \eqref{z1z2sys}, where the parameters $B_{1,\alpha}$ and $B_{2,\alpha}$ are chosen as
\begin{gather}\label{B1B2alpha}
B_{1,\alpha}=
\begin{cases}
\lambda_n \tilde \beta_{2,\alpha}, & \text{if} \quad \xi_n=0,\\
\displaystyle \frac{\lambda_n \tilde \beta_{2,\alpha}^0}{2\xi_n^2}, & \text{if} \quad \xi_n\neq 0,
\end{cases}
\quad  
B_{2,\alpha}=-\frac{2 B_{1,\alpha}\xi_n+2\eta_q}{\lambda_n}.
\end{gather}
Note that $\tilde \beta_{2,\alpha}=\tilde \omega_{q,\alpha}$ if $\xi_n=0$. Hence, $B_{1,\alpha}>0$. The derivative of $\mathcal V_\alpha(z_1,z_2,t)$ along the trajectories of system \eqref{z1z2sys} is given by 
\begin{gather*}
\frac{d\mathcal V_\alpha}{dt}={\mathfrak D}_\alpha(z_1,z_2,t)+\widetilde{\mathfrak D}_\alpha(z_1,z_2,t), 
\end{gather*}
where 
\begin{align*}
{\mathfrak D}_\alpha(z_1,z_2,t)\equiv & \, \left(\partial_t + {\mathcal F}(z_1,z_2,t)\partial_{z_1}+ {\mathcal G}(z_1,z_2,t)\partial_{z_2}\right)\mathcal V_\alpha(z_1,z_2,t), \\
\widetilde{\mathfrak D}_\alpha(z_1,z_2,t)\equiv & \, \left(\widetilde {\mathcal F}(z_1,z_2,t)\partial_{z_1}+\widetilde {\mathcal G}(z_1,z_2,t)\partial_{z_2}\right)\mathcal V_\alpha(z_1,z_2,t),
\end{align*}
It can easily be checked that
\begin{align*}
{\mathfrak D}_\alpha(z_1,z_2,t)= & \,  2B_{1,\alpha} \beta_1^0  \left( \mu^n(t) z_1^2 + \mu^q(t) (\xi_n^2+\delta_{\xi_n,0})\lambda_n^{-2}z_2^2 \right) \left (1+\mathcal O\left(|{\bf z}|\right)+\mathcal O\left(\mu(t)+|\tilde \ell(t)|\right) \right), \\
\widetilde{\mathfrak D}_\alpha(z_1,z_2,t)= & \, \mathcal O\left(|{\bf z}| \right)\mathcal O\left( \mu^{N+1-\alpha-\frac{q-n}{2}}(t)\right)
\end{align*}
as $|{\bf z}|\to 0$ and $t\to\infty$. Consequently, there exist $\Delta_1>0$ and $t_1\geq t_0$ such that
\begin{gather}
\label{Vineq} 
B_- |{\bf z}|^2\leq\mathcal V_\alpha (z_1,z_2,t)\leq B_+  |{\bf z}|^2,\\
\nonumber 
{\mathfrak D}_\alpha(z_1,z_2,t) \leq -\hat C_{\alpha}\mu^q(t)  |{\bf z}|^2, \quad
\widetilde{\mathfrak D}_\alpha(z_1,z_2,t) \leq \tilde C_{\alpha}\mu^q(t)  \varsigma(t) |{\bf z}|
\end{gather}
for all $|{\bf z}|\leq \Delta_1$ and $t\geq t_1$ with $B_+=2\max\{1,B_{1,\alpha}\}>0$, $B_-=\min\{1,B_{1,\alpha}\}/2>0$, $\hat C_{\alpha}=B_{1,\alpha}|\beta_1^0|\min\{1,(\xi_n^2+\delta_{\xi_n,0})\lambda_n^{-2}\}>0$ and some $\tilde C_{\alpha}>0$. Here, $\varsigma(t)\equiv \mu^{(2N+2+n-3q)/2-\alpha}(t)$ is a positive strictly decreasing function as $t\geq t_1$. 
This implies that for all $\epsilon\in (0,\Delta_1)$ there exist $\delta_\epsilon= 2\tilde C_{\alpha} \varsigma(t_\epsilon)/\hat C_{\alpha}$ and $t_\epsilon\geq t_1$ such that $\varsigma(t_\epsilon)\leq \epsilon \hat C_{\alpha}/ (2\tilde C_{\alpha})$ and the following inequality holds:
\begin{gather*}
\frac{d\mathcal V_\alpha}{dt} \leq 	\mu^q(t) \left(-\hat C_{\alpha}+\frac{\tilde C_{\alpha}\varsigma(t_\epsilon)}{\delta_\epsilon}\right)|{\bf z}|^2 \leq 0
\end{gather*}
for all $\delta_\epsilon\leq |{\bf z}|\leq \epsilon$ and $t\geq t_\epsilon$. Combining this with \eqref{Vineq}, we see that any solution of system \eqref{z1z2sys} with initial data $|{\bf z}(t_s)|\leq \delta$ as $t_s\geq t_\epsilon$, where $\delta=\min\{\delta_\epsilon,(\epsilon/2)\sqrt{B_-/B_+}\}$, cannot exit from the domain $|{\bf z}|\leq \epsilon$ as $t> t_s$. Hence, there exists a solution of system \eqref{trsys} such that $\rho_L(t)=\rho_{\ast}(t)+\mathcal O(\mu^{\alpha}(t))$, $\phi_L(t)=\phi_{\ast}(t)+\mathcal O(\mu^{\alpha+\frac{q-n}{2}}(t))$ as $t\to\infty$. 

To prove the stability of the solution $\rho_L(t)$, $\phi_L(t)$, consider the change of variables 
\begin{gather} \label{subsast}
\rho(t)=\rho_L(t)+z_1(t), \quad \phi(t)=\phi_L(t)+ \mu^{\frac{q-n}{2}}(t)z_2(t).
\end{gather} 
Substituting \eqref{subsast} into \eqref{trsys}, we obtain system \eqref{z1z2sys}, where
\begin{gather}\label{FG2}
\begin{split}
{\mathcal F}(z_1,z_2,t)\equiv 
& \, 
\Lambda\left(\rho_L(t)+z_1,\phi_L(t)+\mu^{\frac{q-n}{2}}(t) z_2, S(t),t\right)
-\Lambda\left(\rho_L(t),\phi_L(t),S(t),t\right), \\
{\mathcal G}(z_1,z_2,t)\equiv &\, \mu^{-\frac{q-n}{2}}(t)\left(\Omega\left(\rho_L(t)+z_1,\phi_L(t)+\mu^{\frac{q-n}{2}}(t) z_2, S(t),t\right)-\Omega\left(\rho_L(t),\phi_L(t),S(t),t\right) \right) \\
 & -\frac{q-n}{2}\ell(t) z_2,
\end{split}
\end{gather}
and $\widetilde{\mathcal F}(z_1,z_2,t)\equiv \widetilde{\mathcal G}(z_1,z_2,t)\equiv 0$. In this case,
\begin{gather}\label{FG2as}
\begin{split}
{\mathcal F}(z_1,z_2,t) = &\,  
\left\{\mu^n(t)  \lambda_n z_1 \left(1 +\mathcal O(z_1 ) \right)+  \mu^{\frac{q+n}{2}}(t) \left(\xi_n z_2 + \mathcal O\left(|{\bf z}|^2\right)\right)\right\}\left(1+\mathcal O(\mu(t))\right), \\ 
{\mathcal G}(z_1,z_2,t) = &\, \left\{\mu^{\frac{q+n}{2}}(t)\eta_q z_1 \left( 1+\mathcal O(z_1)\right) +\mu^{q}(t)\left( \tilde\omega_{q} z_2 \left(1+\mathcal O(\tilde\ell(t))+\mathcal O(\mu(t))\right)+ \mathcal O\left(|{\bf z}|^2\right)\right)\right\} \\
& \times\left(1+\mathcal O(\mu(t))\right)
\end{split}
\end{gather}
as $|{\bf z}|\to 0$ and $t\to\infty$, where $\tilde \omega_q=\omega_q-\delta_{m,q}(q-n)\chi_m/2$. By using Lyapunov function $\mathcal V_0(z_1,z_2,t)$  defined by \eqref{Valpha} and \eqref{B1B2alpha} with $\alpha=0$ and repeating the arguments as given above, we obtain
\begin{gather*}
\frac{d\mathcal V_0}{dt}\leq - \mu^q(t) C_0 \mathcal V_0\leq 0
\end{gather*}
 for all $|{\bf z}|\leq \Delta_2$, $t\geq t_2$ with some $0<\Delta_2>0$, $t_2> t_0$ and $C_0= \hat C_{0}/B_+>0$. Hence, the equilibrium $(0,0)$ of system \eqref{z1z2sys} is stable. Moreover, integrating the last inequality and using \eqref{Vineq}, we get
\begin{gather*}
\left|{\bf z}(t)\right|\leq C_+ \exp\left(-\frac{C_0}{2} (\gamma_q(t)-\gamma_q(t_2))\right), \quad t\geq t_2
\end{gather*}
with some $C_+>0$. It follows that if $\gamma_q(t)\to\infty$ as $t\to\infty$, then the equilibrium $(0,0)$ is asymptotically stable. Taking into account \eqref{subsast}, we obtain the stability of the solution $\rho_L(t)$, $\phi_L(t)$.
\end{proof}

\begin{proof}[Proof of Theorem~\ref{Th2}]
Substituting 
\begin{gather*}
r(t)=\rho_L(t)+z_1(t), \quad \psi(t)=\phi_L(t)+\mu^{\frac{q-n}{2}}(t) z_2(t)
\end{gather*}
 into \eqref{rpsi}, we obtain
\begin{gather}\label{zsde}
d {\bf z}(t)= \begin{pmatrix} \mathcal F(z_1,z_2,t) \\ 
\mathcal G(z_1,z_2,t)\end{pmatrix}dt+\varepsilon {\bf D}(z_1,z_2,t)\, d{\bf w}(t),
\end{gather}
where ${\bf D}(z_1,z_2,t)\equiv \{d_{i,j}(z_1,z_2,t)\}_{2\times 2}$, 
\begin{align*}
d_{1,j}(z_1,z_2,t) \equiv & \,  \sigma_{1,j}\left(\rho_L(t)+z_1,\phi_L(t)+ \mu^{\frac{q-n}{2}}(t) z_2,S(t),t\right),\\
d_{2,j}(z_1,z_2,t) \equiv & \, \mu^{-\frac{q-n}{2}}(t) \sigma_{2,j}\left(\rho_L(t)+z_1,\phi_L(t)+\mu^{\frac{q-n}{2}}(t)z_2,S(t)\right),
\end{align*}
the functions ${\mathcal F}(z_1,z_2,t)$, ${\mathcal G}(z_1,z_2,t)$ are defined by \eqref{FG2} and satisfy asymptotic estimates \eqref{FG2as}. 
It follows from \eqref{tildeLO} that
\begin{gather*}
d_{1,j}(z_1,z_2,t)=\mathcal O\left(\mu^p(t)\right), \quad
d_{2,j}(z_1,z_2,t)=\mathcal O\left(\mu^{\frac{2p-q+n}{2}}(t)\right)
\end{gather*}
as $t\to\infty$ uniformly for all $|{\bf z}|\leq \Delta_0$ with some $\Delta_0>0$. Note that $\mathcal F(0,0,t)\equiv \mathcal G(0,0,t)\equiv 0$. It follows from Lemma~\ref{Lem2} that system \eqref{zsde} with $\varepsilon=0$ has a stable equilibrium $(0,0)$. Let us prove the stability of the equilibrium under persistent perturbations by white noise by constructing a suitable stochastic Lyapunov function~\cite[Chapter II, \S 5]{HJK67}. 

Note that the generator of the process defined by \eqref{zsde} has the  form $\mathcal L\equiv  \mathcal L_0+\varepsilon^2 \mathcal L_1$, where
\begin{align*}
\mathcal L_0:=& \partial_t+\mathcal F \partial_{z_1}+ \mathcal G\partial_{z_2},\\
\mathcal L_1:=& \frac{1}{2}\left((d_{1,1}^2+d_{1,2}^2)\partial_{z_1}^2+2(d_{1,1}d_{2,1}+d_{1,2}d_{2,2})\partial_{z_1}\partial_{z_2}+(d_{2,1}^2+d_{2,2}^2)\partial_{z_2}^2\right).
\end{align*}
Consider the function $\mathcal U_0(z_1,z_2,t)\equiv \mathcal V_0(z_1,z_2,t)$, where $\mathcal V_0(z_1,z_2,t)$ is defined by \eqref{Valpha} and \eqref{B1B2alpha} with $\alpha=0$. It can easily be checked that 
\begin{align*}
 \mathcal L_0  \mathcal U_0(z_1,z_2,t) & = 2B_{1,0} \beta_1^0  \left( \mu^n(t) z_1^2 + \mu^q(t) (\xi_n^2+\delta_{\xi_n,0})\lambda_n^{-2}z_2^2 \right) \left (1+\mathcal O\left(|{\bf z}|\right)+\mathcal O\left(\mu(t)+|\tilde \ell(t)|\right) \right), \\
 \mathcal L_1 \mathcal U_0(z_1,z_2,t) & = \mathcal O\left(\mu^{2p-q+n}(t)\right)
\end{align*}
as $|{\bf z}|\to 0$ and $t\to\infty$, where $B_{1,0}>0$ is defined by \eqref{B1B2alpha}. Hence, there exist $0<\Delta_1\leq \Delta_0$ and $t_1\geq t_0$ such that
\begin{align}
\label{U0est}
 B_- |{\bf z}|^2\leq \mathcal U_0 (z_1,z_2,t)\leq B_+  |{\bf z}|^2,\\
\nonumber
\mathcal L \mathcal U_0(z_1,z_2,t) \leq -C_0 \mu^q(t)  |{\bf z}|^2+\varepsilon^2 C_1 \mu^{2p-q+n}(t),
\end{align}
for all $|{\bf z}|\leq \Delta_1$ and $t\geq t_1$ with $B_+=2\max\{1,B_{1,0}\}>0$, $B_-=\min\{1,B_{1,0}\}/2>0$, $C_{0}=B_{1,0}|\beta_1^0|\min\{1,(\xi_n^2+\delta_{\xi_n,0})\lambda_n^{-2}\}>0$ and some $C_1>0$. 
Fix the parameters $\epsilon_1\in (0,\Delta_1)$, $\epsilon_2\in (0,1)$ and $t_s\geq t_1$. Consider a stochastic Lyapunov function candidate
\begin{gather}\label{Vstdef}
\mathcal V(z_1,z_2,t)\equiv \mathcal U_0(z_1,z_2,t)+\varepsilon^2\mathcal U_1(t,\mathcal T),   
\end{gather}
where $\mathcal U_1(t,\mathcal T)\equiv 
C_1 \left(\gamma_{2p-q+n}(t_s+\mathcal T)-\gamma_{2p-q+n}(t)\right)$
with some parameter $\mathcal T>0$. It can easily be checked that 
\begin{gather}
\label{VLVest}
\begin{split}
\mathcal V(z_1,z_2,t) & \geq \mathcal U_0(z_1,z_2,t)\geq 0, \\ 
\mathfrak L \mathcal V(z_1,z_2,t) & \leq - C_1 \mu^q(t) |{\bf z}|^2\leq 0
\end{split}
\end{gather}
 for all $(z_1,z_2,t)\in \mathcal I(\Delta_1,t_\ast,\mathcal T):=\{(z_1,z_2,t)\in\mathbb R^3: |{\bf z}|\leq \Delta_1, 0\leq t-t_\ast\leq \mathcal T\}$. Let ${\bf z}(t)$ be a solution of system \eqref{zsde} with initial data $|{\bf z}(t_\ast)|\leq  \delta_1<\epsilon_1$ and $0<\varepsilon\leq \delta_2$. Positive parameters $\delta_1$, $\delta_2$ and $\mathcal T$ will be specified later. By $\theta_{\epsilon}$ denote the first exit time of $({\bf z}(t), t)$ from the domain $\mathcal I(\epsilon,t_\ast,\mathcal T)$. Define the function $\theta_{\epsilon_1}(t)\equiv \min\{\theta_{\epsilon_1}, t\}$. Then, ${\bf z}(\theta_{\epsilon_1}(t))$ is the process stopped at the first exit time from the domain $\mathcal I(\epsilon_1,t_\ast,\mathcal T)$. It follows from \eqref{VLVest} that $\mathcal V(z_1(\theta_{\epsilon_1}(t)), z_2(\theta_{\epsilon_1}(t)),\theta_{\epsilon_1}(t))$ is a nonnegative supermartingale (see, for example,~\cite[\S 5.2]{RH12}). Using Doob's inequality for supermartingales, we obtain 
\begin{align*}
\mathbb P\left(\sup_{0\leq t-t_\ast\leq \mathcal T} |{\bf z}(t)|\geq \epsilon_1\right) & = 
\mathbb P\left(\sup_{t\geq t_\ast} |{\bf z}(\theta_{\epsilon_1}(t))|\geq \epsilon_1\right)  \\
& \leq \mathbb P\left(\sup_{t\geq t_\ast} \mathcal V(z_1(\theta_{\epsilon_1}(t)), z_2(\theta_{\epsilon_1}(t)),\theta_{\epsilon_1}(t))\geq B_-\epsilon_1^2\right) \\
&\leq 
\frac{ \mathcal V\left (z_1(\theta_{\epsilon_1}(t_\ast)), z_2(\theta_{\epsilon_1}(t_\ast)\right), \theta_{\epsilon_1}(t_\ast))}{B_-\epsilon_1^2}
\\
&= 
\frac{ \mathcal V (z_1(t_\ast), z_2(t_\ast), t_\ast)}{B_-\epsilon_1^2}.
\end{align*}
It follows from \eqref{U0est} and \eqref{Vstdef} that 
\begin{gather*}
\mathcal V(z_1(t_\ast), z_2(t_\ast), t_\ast)\leq B_+ \delta_1^2+\varepsilon^{2(1-l)} \delta_2^{2l} \mathcal U_1(t_\ast,\mathcal T)
\end{gather*}
 with some $l\in(0,1]$. Consider first the case when $\mu^{2p-q+n}(t)\in L_1(t_0,\infty)$. It follows from \eqref{zetah} that there exists $\mathcal U_\ast>0$ such that $\mathcal U_1(t_\ast,\mathcal T)\leq \mathcal U_\ast$ for all $\mathcal T>0$. By taking $l=1$, 
\begin{align*}
 \delta_1=\epsilon_1\sqrt{ \frac{\epsilon_2 B_-}{2B_+}}, \quad 
 \delta_2=  \epsilon_1\sqrt{ \frac{\epsilon_2 B_-}{2\mathcal U_\ast}},
\quad 
\mathcal T=\infty,
\end{align*}
we obtain
\begin{gather*}
\mathbb P\left(\sup_{t\geq t_\ast} |{\bf z}(t)|\geq \epsilon_1\right) \leq \epsilon_2.
\end{gather*}
Consider now the case when $\mu^{2p-q+n}(t)\not\in L_1(t_0,\infty)$. Note that $\gamma_{2p-q+n}(t)\to\infty$ as $t\to\infty$ and $\mu(t)>0$ for all $t\geq \tau_0$. Hence, there exists $\mathcal T_\varepsilon>0$ such that 
\begin{gather*}
\mathcal U_1(t_\ast,\mathcal T_\varepsilon)=C_1 \varepsilon^{-2(1-l)} \quad \forall\, l \in (0,1).
\end{gather*}
Moreover, $\mathcal T_\varepsilon\to \infty$ as $\varepsilon \to 0$.
By taking
\begin{gather*}
 \delta_1=\epsilon_1\sqrt{ \frac{\epsilon_2 B_-}{2 B_+}}, \quad 
 \delta_2= \left( \frac{\epsilon_1^2\epsilon_2 B_-}{2 C_1}\right)^{\frac{1}{2l}},
\quad 
\mathcal T=\mathcal T_\varepsilon, 
\end{gather*}
we obtain
\begin{gather*}
\mathbb P\left(\sup_{0\leq t-t_\ast\leq \mathcal T_\varepsilon} |{\bf z}(t)|\geq \epsilon_1\right) \leq \epsilon_2.
\end{gather*}
Returning to the variables $r(t)$, $\psi(t)$, we get \eqref{rpst}.  
\end{proof}

\section{Analysis of the phase drift mode}
\label{sec5}

\begin{proof}[Proof of Lemma~\ref{Lem3}]
Substituting 
$
\rho(t)=\rho_0+\mu^{1/2}(t)z(t)
$ into \eqref{trsys}, we obtain
\begin{gather}\label{ypsi}
\frac{dz}{dt}= {\mathfrak F}_n(z,\phi,t)+\widetilde {\mathfrak F}_n(z,\phi,t), \quad 
\frac{d\phi}{dt}=\widetilde {\mathfrak G}_n(z,\phi,t),
\end{gather}
where 
\begin{align*}
{\mathfrak F}_n(z,\phi,t)\equiv & \, \Lambda_n\left(\rho_0+\mu^{\frac{1}{2}}(t)z,\phi\right)\mu^{n-\frac{1}{2}}(t)-\ell(t)\frac{z}{2},\\ 
\widetilde {\mathfrak F}_n(z,\phi,t)\equiv & \, \Lambda\left(\rho_0+\mu^{\frac{1}{2}}(t)z,\phi,S(t),t\right)\mu^{-\frac{1}{2}}(t)- \Lambda_n\left(\rho_0+\mu^{\frac{1}{2}}(t)z,\phi\right)\mu^{n-\frac{1}{2}}(t), \\
\widetilde {\mathfrak G}_n(z,\phi,t)\equiv & \, \Omega\left(\rho_0+\mu^{\frac{1}{2}}(t)z,\phi,S(t),t\right).
\end{align*} 
Note that 
\begin{gather}
\label{FFGas}
\begin{split}
	  {\mathfrak F}_n(z,\phi,t)= & \, \mu^{n}(t)z \left(\Xi_n(\phi) -\delta_{m,n}\frac{\chi_m}{2} +\mathcal O(z)+\mathcal O\left(\tilde\ell(t)\right)\right), \\
		\widetilde{\mathfrak F}_n(z,\phi,t)= & \, \mathcal O\left(\mu^{n+\frac{1}{2}}(t)\right),\\
		\widetilde {\mathfrak G}_n(z,\phi,t)= & \, \mu^q(t)\left(\Omega_q(\rho_0,\phi)+\mathcal O\left(\mu^{\frac{1}{2}}(t)\right)\right)
\end{split}
\end{gather}
as $z\to 0$ and $t\to\infty$ uniformly for all $\phi\in\mathbb R$.
Note that there exists $\xi_\ast>0$ such that $\Xi_n(\phi)-\delta_{m,n}\chi_m/2\leq -\xi_\ast$ for all $\phi\in\mathbb R$. From \eqref{FFGas} it follows that there exist $\Delta_1>0$ and $t_1\geq t_0$ such that 
\begin{gather}\label{phiineq}
\frac{d|z|}{dt}\leq \mu^n(t)\left(-\frac{\xi_\ast |z|}{2}+ \mu^{\frac{1}{2}}(t) C_\ast\right), \quad \left|\frac{d\phi}{dt}\right|\geq \mu^q(t) \mathfrak G_\ast
\end{gather}
for all $|z|\leq \Delta_1$, $\phi\in\mathbb R$ and $t\geq t_1$ with some constants $C_\ast>0$ and $\mathfrak G_\ast>0$.
Therefore, for all $\epsilon\in (0,\Delta_1)$ there exist $\delta_\epsilon= 4 C_\ast \sqrt{\mu(t_\epsilon)}/|\xi_\ast|$ and $t_\epsilon\geq t_1$ such that $\sqrt{\mu(t_\epsilon)}\leq \epsilon |\xi_\ast|/ (4 C_\ast)$ and    
\begin{gather*}
\frac{d|z|}{dt}\leq \mu^n(t)|z|\left(-\frac{\xi_\ast}{2}+\mu^{\frac{1}{2}}(t_\epsilon)\frac{C_\ast}{\delta_\epsilon}\right)\leq -\mu^n(t)\frac{\xi_\ast|z|}{4}
\end{gather*}
for all $\delta_\epsilon\leq |z|\leq \epsilon$, $\phi\in\mathbb R$ and $t\geq t_\epsilon$. Hence, any solution $(z(t),\phi(t))$ of system \eqref{ypsi} with initial data $|z(t_s)|\leq \delta_\epsilon$, $\phi(t_s)\in\mathbb R$, $t_s\geq t_\epsilon$ satisfies the inequality $|z(t)|<\epsilon$ for all $t\geq t_s$. Thus, there exists a solution $\tilde \rho_D(t)$, $\tilde \phi_D(t)$ of system \eqref{trsys} such that $\tilde \rho_D(t)=\rho_0+\mathcal O(\mu^{1/2}(t))$ as $t\to\infty$. Integrating the second inequality in \eqref{phiineq}, we get $\tilde \phi_D(t)\to \infty$ as $t\to\infty$.

\end{proof}

\begin{proof}[Proof of Lemma~\ref{Lem4}]
Consider the transformation
\begin{gather}\label{zch}
	E_N(r,\psi,t)=r+\sum_{k=n-q}^{N-q}  e_k(r,\psi) \mu^{k}(t), 
\end{gather}
with $N\in [n,m]$. The functions $e_k(r,\psi)$ are assumed to be periodic with respect in $\psi$ and are sought in such a way that the system written in the new variables 
$z(t)\equiv E_N(r(t),\psi(t),t)$, $\psi(t)$
takes the form \eqref{rpsi2}. 

Denote by $\mathfrak L$ the generator of the process defined by system \eqref{rpsi}. Then,
\begin{gather*}
\mathfrak L U:= 	 (\partial_t + \Lambda \partial_r+\Omega \partial_\psi) U+\frac{\varepsilon^2}{2}{\hbox{\rm tr}}\left({\bf C}^T {\bf H}_{(r,\psi)}(U){\bf C}\right) 
\end{gather*}
for any smooth function $U(r,\psi,t)$, where ${\bf H}_{(r,\psi)}(U)$ is defined by \eqref{HJdef}. By applying It\^{o}'s formula, we get
\begin{gather}\label{Itoz}
d z =\mathfrak L  E_N( r,\psi,t)\, dt + \varepsilon (\nabla_{(r,\psi)} E_N(r,\psi,t))^T {\bf C}(r,\psi,S(t),t) \, d{\bf w}(t).
\end{gather}
It follows from \eqref{FGas}, \eqref{tildeas} and \eqref{Cform} that
\begin{gather*}
{\bf C}(r,\psi,S(t),t)= \sum_{k=p}^{p+N} {\bf C}_k(r,\psi,S(t))\mu^{k}(t)+\tilde {\bf C}_{p+N}(r,\psi,t), \quad 
\|\tilde {\bf C}_{p+N}(r,\psi,t)\|=\mathcal O\left(\mu^{p+N+1}(t)\right)
 \end{gather*}
as  $t\to\infty$ uniformly for all $r\in(0,\mathcal R]$, $(\psi,S)\in\mathbb R^2$. Hence, the drift term in \eqref{Itoz} takes the following form:
\begin{gather}\label{dz}
\begin{split}
\mathfrak L E_N  = &
\sum_{k=n}^{N} \mu^k(t)\left\{\Lambda_k-\frac{\kappa s_q}{\varkappa}\partial_\psi e_{k-q}  \right\} + 
\sum_{k=2n-q}^N \mu^k(t) \sum_{\substack{i+j=k\\ n\leq i\leq N \\ n-q\leq j\leq N-q}} \Lambda_{i}\partial_r e_{j}+ 
\sum_{k=n+1}^N \mu^k(t) \sum_{\substack{i+j=k\\ q+1\leq i\leq N \\ n-q\leq j\leq N-q}} \Omega_{i}\partial_\psi e_{j}\\
 & +\frac{\varepsilon^2}{2}\sum_{k=n+2p-q}^N \mu^k(t) \sum_{\substack{i+j+l=k\\ p\leq i,j\leq p+N \\ n-q\leq l\leq N-q}} 
{\hbox{\rm tr}}\left({\bf C}^T_{i} {\bf H}_{(r,\psi)}(e_{l} ){\bf C}_{j}\right)+ \tilde{\mathfrak R}_N(r,\psi,t),
\end{split}
\end{gather}
where  
\begin{align*}
\tilde{\mathfrak R}_N(r,\psi,t)\equiv & \, \widetilde\Lambda_N(r,\psi,S(t),t)+
\sum_{k=N+1}^{2N-q} \mu^k(t) \sum_{\substack{i+j=k\\ n\leq i\leq N \\ n-q\leq j\leq N-q}} \Lambda_{i}\partial_r e_{j}+ 
\sum_{k=N+1}^{2N-q} \mu^k(t) \sum_{\substack{i+j=k\\ q+1\leq i\leq N \\ n-q\leq j\leq N-q}} \Omega_{i}\partial_\psi e_{j}\\
 & +\frac{\varepsilon^2}{2}\sum_{k=N+1}^{2p+3N-q} \mu^k(t) \sum_{\substack{i+j+l=k\\ p\leq i,j\leq p+N \\ n-q\leq l\leq N-q}} {\hbox{\rm tr}}\left ({\bf C}^T_{i} {\bf H}_{(r,\psi)}(e_{l} ){\bf C}_{j}\right)\\
 & +\sum_{k= n-q}^{N-q} \mu^k(t) \left\{ \left(k\ell(t)+\tilde \Lambda_{N}\partial_r+\tilde \Omega_N\partial_\psi\right)e_k+\frac{\varepsilon^2}{2}{\hbox{\rm tr}}\left (\tilde {\bf C}^T_{p+N} {\bf H}_{(r,\psi)}(e_{k} )\tilde{\bf C}_{p+N}\right) \right\}.
\end{align*}
It follows easily that 
\begin{gather}\label{tildeRN}
|\tilde {\mathfrak R}_{N}(r,\psi,t)|=\mathcal O\left(\mu^{N+1}(t)\right)
\end{gather}
 as $t\to\infty$ uniformly for all $r\in (0,\mathcal R]$ and $\psi\in\mathbb R$. Comparing the drift term of the first equation in \eqref{rpsi2} with \eqref{dz}, we obtain
\begin{gather}\label{ek}
-\frac{\kappa s_q}{\varkappa}\partial_\psi e_{k-q}=\mathcal F_k(r)-\Lambda_k(r,\psi)+\tilde g_k(r,\psi), \quad k=n,\dots, N,
\end{gather} 
where each $\tilde g_k(r,\psi)$ is expressed in terms of $\{e_{j-q}, \mathcal F_{j}\}_{j=n}^{k-1}$. In particular, if $p=q=1$, $n=2$, then
$\tilde g_2 \equiv 0$, $\tilde g_3 \equiv e_1\partial_r \mathcal F_2
-(\Lambda_2\partial_r + \Omega_2\partial_\psi )e_1- \varepsilon^2 
{\hbox{\rm tr}} ({\bf C}^T_{1} {\bf H}_{(r,\psi)}(e_{1} ){\bf C}_{1})/2$.
Define 
\begin{gather*}
\mathcal F_k(r) \equiv \left\langle \Lambda_k(r,\psi)-\tilde g_k(r,\psi)\right\rangle_{\psi}.
\end{gather*}
Then, system \eqref{ek} is solvable in the class of functions that are $2\pi$-periodic in $\psi$ with zero mean.
From \eqref{zch} it follows that for all $\epsilon\in (0,\mathcal R/2)$ there exists $t_1\geq t_0$ such that $|E_N(r,\psi,t)-r|< \epsilon$, $|\partial_r E_N(r,\psi,t)-1|<\epsilon$ for all $r\in(0,\mathcal R]$, $\psi\in\mathbb R$ and $t\geq t_1$. Hence, there exists the inverse transformation $r= \mathfrak R(z,\psi,t)$ such that $0<\mathfrak R(z,\psi,t)<\mathcal R$ for all $z\in [\epsilon,\mathcal R-\epsilon]$, $\psi\in\mathbb R$ and $t\geq t_1$.
Define
\begin{align*}
\tilde {\mathcal F}_N(z,\psi,t)
\equiv & \,
\mathfrak L E_N \left(\mathfrak R(z,\psi,t),\psi,t\right) - \sum_{k=n}^N \mathcal F_k(z) \mu^k(t), \\
\mathcal G(z,\psi,t) \equiv & \, \Omega(\mathfrak R(z,\psi,t),\psi,S(t),t), \\
{\bf Z}(z,\psi,t)\equiv & \,
{\bf J}_{(r,\psi)} \left(E_N(\mathfrak R(z,\psi,t),\psi,t),\psi\right) {\bf C}(\mathfrak R(z,\psi,t),\psi,S(t),t),
\end{align*}
where ${\bf J}_{(r,\psi)}(U,V)$ is defined by \eqref{HJdef}.
Combining this with \eqref{tildeLO}, \eqref{zch} and \eqref{tildeRN}, we get \eqref{tildeLO2}. Thus, we obtain the proof of Lemma~\ref{Lem4} with $\tilde E_N(r,\psi,t)\equiv  E_N(r,\psi,t)-r$.

\end{proof}

\begin{proof}[Proof of Lemma~\ref{Lem5}]
Consider the function
\begin{gather}\label{rhostar}
\rho_\star(t)=\rho_0+\sum_{k=1}^{N-n}\rho_k \mu^k(t), 
\end{gather}
where $\{\rho_k\}_{k=1}^{N-n}$ are some constants. 
Substituting \eqref{rhostar} in the first equation in \eqref{trsys2} and grouping the terms of like powers of $\mu(t)$, we obtain a chain of equations for $\rho_k$: $\hat\xi_n \rho_k=F_k$, where $F_k$ is expressed through $\rho_0,\rho_1, \dots, \rho_{k-1}$. In particular, $F_1=-\mathcal F_{n+1}(\rho_0)$, $F_2=-\mathcal F_{n+2}(\rho_0)-\rho_1 \mathcal F_{n+1}'(\rho_0)-\rho_1^2\mathcal F_{n}''(\rho_0)/2 $. Since $\hat\xi_{n}\neq 0$, we see that this system is solvable. Moreover, by construction, 
\begin{gather*}
\mathcal R_\star(t)\equiv \frac{d\rho_{\star}(t)}{dt}-\widehat{\mathcal F}(\rho_\star(t),t)=\mathcal O\left(\mu^{N+1}(t)\right), \quad t\to\infty.
\end{gather*}
Substituting 
$
\rho(t)=\rho_\star(t)+u(t) \mu^{1/2}(t)
$ into \eqref{trsys2}, we obtain
\begin{gather}\label{ypsi2}
\frac{du}{dt}= \widehat {\mathfrak F}_\star(u,t)+\widetilde {\mathfrak F}_\star(u,\phi,t), \quad 
\frac{d\phi}{dt}=-\frac{\kappa s_q}{\varkappa}\mu^q(t)+\widetilde {\mathfrak G}_\star(u,\phi,t),
\end{gather}
where 
\begin{align*}
\widehat {\mathfrak F}_\star(u,t)\equiv & \, \left(\widehat{\mathcal F}\left(\rho_\star(t)+u\mu^{\frac{1}{2}}(t),t\right)-\widehat{\mathcal F}(\rho_\star(t),t)\right)\mu^{-\frac{1}{2}}(t)-\ell(t)\frac{u}{2},\\ 
\widetilde {\mathfrak F}_\star(u,\phi,t)\equiv & \, \left(\widetilde{\mathcal F}\left(\rho_\star(t)+u\mu^{\frac{1}{2}}(t),\phi,t\right)-\mathcal R_\star(t)\right)\mu^{-\frac{1}{2}}(t), \\
\widetilde {\mathfrak G}_\star(u,\phi,t)\equiv & \, \widetilde{\mathcal G}_q\left(\rho_\star(t)+u\mu^{\frac{1}{2}}(t),\phi,t\right).
\end{align*} 
It can easily be checked that 
\begin{gather}
\label{FFGas2}
\begin{split}
	  \widehat {\mathfrak F}_\star(u,t)= & \, \mu^{n}(t)u \left(\hat\xi_n -\delta_{m,n}\frac{\chi_m}{2}+\mathcal O(u)+\mathcal O\left(\varsigma(t)\right)\right), \\
		\widetilde{\mathfrak F}_\star(u,\phi,t)= & \, \mathcal O\left(\mu^{N+\frac{1}{2}}(t)\right),\\
		\widetilde {\mathfrak G}_\star(u,\phi,t)= & \, \mathcal O\left(\mu^{q+1}(t)\right)
\end{split}
\end{gather}
as $u\to 0$ and $t\to\infty$ uniformly for all $\phi\in\mathbb R$, where $\varsigma(t)\equiv \mu^{1/2}(t)+|\tilde\ell(t)|$ is strictly decreasing.
Define $\hat\xi_\star=\hat\xi_n-\delta_{m,n}\chi_m/2<0$. From \eqref{FFGas2} it follows that there exist $\Delta_1>0$ and $t_2\geq t_1$ such that 
\begin{gather}\label{phiineq2}
\frac{d|u|}{dt}\leq \mu^n(t)\left(-\frac{|\hat\xi_\star| |u|}{2}+ \varsigma(t) C_1\right), \quad 
	\left|\frac{d\phi}{dt}+\frac{\kappa s_q}{\varkappa}\mu^q(t)\right|\leq C_2\mu^{q+1}(t) 
\end{gather} 
for all $|u|\leq \Delta_1$, $\phi\in\mathbb R$ and $t\geq t_2$ with some $C_1>0$ and $C_2>0$.
Therefore, for all $\epsilon\in (0,\Delta_1)$ there exist $\delta_\epsilon= 4 C_1 \varsigma(t_\epsilon)/|\hat\xi_\star|$ and $t_\epsilon\geq t_2$ such that $\varsigma(t_\epsilon)\leq \epsilon |\hat\xi_\star|/ (4 C_1)$ and    
\begin{gather*}
\frac{d|u|}{dt}\leq \mu^n(t)|u|\left(-\frac{|\hat\xi_\star|}{2}+\varsigma(t_\epsilon)\frac{C_1}{\delta_\epsilon}\right)\leq -\mu^n(t)\frac{\hat\xi_\star|u|}{4}
\end{gather*}
for all $\delta_\epsilon\leq |u|\leq \epsilon$, $\phi\in\mathbb R$ and $t\geq t_\epsilon$. Hence, any solution $(u(t),\phi(t))$ of system \eqref{ypsi2} with initial data $|u(t_s)|\leq \delta_\epsilon$, $\phi(t_s)\in\mathbb R$, $t_s\geq t_\epsilon$ satisfies the inequality $|u(t)|<\epsilon$ for all $t\geq t_s$. Thus, there exists a solution $\rho_D(t)$, $\phi_D(t)$ of system \eqref{trsys2} such that $\rho_D(t)=\rho_0+\mathcal O(\mu^{1/2}(t))$ as $t\to\infty$. Integrating the second inequality in \eqref{phiineq2}, we obtain $\phi_D(t)=-\kappa s_q \varkappa^{-1}\gamma_q(t) (1+o(1))$ as $t\to\infty$.
\end{proof}

\begin{proof}[Proof of Theorem~\ref{Th3}]
Let $\rho_D(t)$, $\phi_D(t)$ be the solution of the system \eqref{trsys2} described in Lemma~\ref{Lem5}. Substituting 
\begin{gather}\label{rhodz}
z(t)=\rho_D(t)+u(t)
\end{gather}
into \eqref{rpsi2}, we obtain
\begin{gather}\label{upe}
d \begin{pmatrix}u \\ 
\psi\end{pmatrix}= \begin{pmatrix} \mathfrak F_D(u,\psi,t) \\ 
\mathfrak G_D(u,\psi,t)\end{pmatrix}dt+\varepsilon {\bf D}(u,\psi,t)\, d{\bf w}(t),
\end{gather}
where ${\mathfrak F}_D(u,\psi,t)\equiv \widehat{\mathfrak F}_D(u,t)+\widetilde{\mathfrak F}_D(u,\psi,t)$, ${\bf D}(u,\psi,t)\equiv \{d_{i,j}(z,\phi,t)\}_{2\times 2}$,
\begin{gather*}
\begin{split}
\widehat{\mathfrak F}_D(u,t)\equiv 
& \, 
\widehat{\mathcal F}_N\left(\rho_D(t)+u,t\right)
-\widehat{\mathcal F}_N\left(\rho_D(t),t\right), \\
\widetilde{\mathfrak F}_D(u,\psi,t)\equiv 
& \, 
\widetilde{\mathcal F}_N\left(\rho_D(t)+u,\psi,t\right)
-\widetilde{\mathcal F}_N\left(\rho_D(t),\phi_D(t),t\right), \\
{\mathfrak G}_D(u,\psi,t)\equiv &\, \mathcal G\left(\rho_D(t)+u,\psi,t\right), \\
d_{i,j}(u,\psi,t) \equiv & \,  \zeta_{i,j}\left(\rho_D(t)+u,\psi,t\right).
\end{split}
\end{gather*}
Note that $\widehat {\mathfrak F}_D(0,t)\equiv 0$ and  $\widehat {\mathfrak F}_D(u,t) = \mu^n(t)u \left(\hat\xi_n+\mathcal O(u)+\mathcal O\left(\mu^{1/2}(t)\right)\right)$ as $u\to 0$, $t\to\infty$ uniformly for all $\psi\in\mathbb R$.
It follows from \eqref{tildeLO2} that
\begin{align*}
\widetilde {\mathfrak F}_D(u,\psi,t) =  \mathcal O\left(\mu^{N+1}(t)\right), \quad 
\mathfrak G_D(u,\psi,t) =  \mu^q(t)\left(-\frac{\kappa s_q}{\varkappa} +\mathcal O\left(\mu(t)\right)\right),\quad
d_{i,j}(u,\psi,t)=  \mathcal O\left(\mu^{p}(t)\right)
\end{align*}
as $t\to\infty$ uniformly for all $|u|\leq \Delta_1$ and $\psi\in\mathbb R$ with some $\Delta_1>0$. Denote by $\mathfrak L\equiv  \widehat{\mathfrak L}_0+\widetilde {\mathfrak L}_0+\varepsilon^2 \mathfrak L_1$ the generator of the process defined by \eqref{upe}, where
\begin{align*}
\widehat{\mathfrak L}_0:=& \partial_t+\widehat {\mathfrak F}_D \partial_{u}+ \mathfrak G_D\partial_{\psi},\\
\widetilde {\mathfrak L}_0:=&\widetilde {\mathfrak F}_D \partial_{u},\\
\mathfrak L_1:=& \frac{1}{2}\left((d_{1,1}^2+d_{1,2}^2)\partial_{u}^2+2(d_{1,1}d_{2,1}+d_{1,2}d_{2,2})\partial_{u}\partial_{\psi}+(d_{2,1}^2+d_{2,2}^2)\partial_{\psi}^2\right).
\end{align*}
Consider an auxiliary function $\mathfrak U_0(u)\equiv u^2/2$. It can easily be checked that 
\begin{align*}
\widehat{\mathfrak L}_0  \mathfrak U_0(u) & = \mu^n(t)u^2 \left(\hat\xi_n+\mathcal O(u)+\mathcal O\left(\mu^{\frac{1}{2}}(t)\right)\right), \\
\widetilde{\mathfrak L}_0  \mathfrak U_0(u) & = \mathcal O(u) \mathcal O\left(\mu^{N+1}(t)\right), \\
 \mathfrak L_1 \mathfrak U_0(u)& = \mathcal O\left(\mu^{2p}(t)\right)
\end{align*}
as $u\to 0$ and $t\to\infty$ uniformly for all $\psi\in\mathbb R$. Hence, there exist $0<\Delta_2\leq \Delta_1$ and $t_2\geq t_1$ such that
\begin{gather}
\begin{split}
\label{Vineq2} 
\widehat{\mathfrak L}_0 \mathfrak U_0(u) \leq & - \mu^n(t) \frac{3|\hat\xi_n|u^2}{4} , \\
\widetilde{\mathfrak L}_0 \mathfrak U_0(u) \leq &  2 \tilde C_0 \mu^{N+1}(t)|u| \leq \tilde C_0 \mu^{N+\frac{1}{2}}(t)\left(\frac{u^2}{\varepsilon^2}+\varepsilon^2 \mu(t)\right)\leq \mu^n(t)\frac{|\hat\xi_n| u^2}{4}+ \tilde C_0\varepsilon^2\mu^{n+1}(t),\\
{\mathfrak L}_1 \mathfrak U_0(u) \leq & C_1 \mu^{2p}(t)
\end{split}
\end{gather}
for all $|u|\leq \Delta_2$, $t\geq t_2$ and $\psi\in\mathbb R$ with some $\tilde C_0>0$ and $C_1>0$. 
Fix the parameters $\epsilon_1\in (0,\Delta_2)$, $\epsilon_2\in (0,1)$, $t_s\geq t_2$, and consider a stochastic Lyapunov function candidate in the following form:
\begin{gather*}
\mathfrak V(u,t)\equiv \mathfrak U_0(u)+\varepsilon^2\mathfrak U_1(t,\mathcal T),   
\end{gather*}
where $\mathfrak U_1(t,\mathcal T)\equiv 
C_2 \left(\gamma_{c}(t_s+\mathcal T)-\gamma_{c}(t)\right)$
with $C_2=\tilde C_0+C_1>0$ and some parameter $\mathcal T>0$. It follows from \eqref{Vineq2} that 
\begin{gather}
\label{LUest}
\begin{split}
\mathfrak V(u,t)  \geq \frac{u^2}{2}\geq 0, \quad 
\mathfrak L \mathfrak V(u,t)  \leq -  \mu^{n}(t) \frac{|\hat\xi_n|u^2}{2}\leq 0
\end{split}
\end{gather}
 for all $(u,\psi,t)\in \mathfrak I(\Delta_2,t_s,\mathcal T):=\{(u,\psi,t)\in\mathbb R^3: |u|\leq \Delta_2, 0\leq t-t_s\leq \mathcal T\}$. Let $(u(t),\psi(t))$ be a solution of system \eqref{upe} with initial data $|u(t_s)|\leq  \delta_1<\epsilon_1$, $\psi(t_s)\in\mathbb R$ and $0<\varepsilon\leq \delta_2$. Positive parameters $\delta_1$, $\delta_2$ and $\mathcal T$ will be specified later. By $\theta_{\epsilon_1}$ denote the first exit time of $(u(t),\psi(t), t)$ from the domain $\mathfrak I(\epsilon,t_s,\mathcal T)$. Define the function $\theta_{\epsilon_1}(t)\equiv \min\{\theta_{\epsilon_1}, t\}$. Then, $(u(\theta_{\epsilon_1}(t)),\psi(\theta_{\epsilon_1}(t)),\theta_{\epsilon_1}(t))$ is the process stopped at the first exit time from the domain $\mathfrak I(\epsilon_1,t_s,\mathcal T)$. It follows from \eqref{LUest} that $\mathfrak V(u(\theta_{\epsilon_1}(t)),\theta_{\epsilon_1}(t))$ is a nonnegative supermartingale. By applying Doob's inequality, we obtain 
\begin{align*}
\mathbb P\left(\sup_{0\leq t-t_s\leq \mathcal T} |u(t)|\geq \epsilon_1\right) & = 
\mathbb P\left(\sup_{t\geq t_s} |u(\theta_{\epsilon_1}(t))|\geq \epsilon_1\right)  \\
& \leq \mathbb P\left(\sup_{t\geq t_s} \mathfrak V(u(\theta_{\epsilon_1}(t)),\theta_{\epsilon_1}(t))\geq \frac{\epsilon_1^2}{2}\right) \\
&\leq 
\frac{ 2\mathfrak V\left (u(\theta_{\epsilon_1}(t_s)), \theta_{\epsilon_1}(t_s)\right)}{\epsilon_1^2}
\\
&= 
\frac{ 2\mathfrak V (u(t_s),t_s)}{\epsilon_1^2}.
\end{align*}
It can easily be checked that 
$\mathfrak V(u(t_s), t_s)\leq {\delta_1^2}/{2}+\varepsilon^{2(1-l)} \delta_2^{2l} \mathfrak U_1(t_s,\mathcal T)$ 
 with some $l\in(0,1]$. Let $\mu^{c}(t)\in L_1(t_0,\infty)$. Then, there exists $\mathfrak U_\ast>0$ such that $\mathfrak U_1(t_s,\mathcal T)\leq \mathfrak U_\ast$ for all $\mathcal T>0$. By taking 
\begin{align*}
l=1, \quad 
\delta_1=\epsilon_1\sqrt{ \frac{\epsilon_2}{2}}, \quad 
\delta_2=  \epsilon_1\sqrt{ \frac{\epsilon_2 }{4\mathfrak U_\ast}},
\quad 
\mathcal T=\infty,
\end{align*}
we obtain
\begin{gather*}
\mathbb P\left(\sup_{t\geq t_s} |u(t)|\geq \epsilon_1\right) \leq \epsilon_2.
\end{gather*}
Now, let $\mu^{c}(t)\not\in L_1(t_0,\infty)$. In this case, $\gamma_{c}(t)\to\infty$ as $t\to\infty$. Hence, there exists $\mathcal T_\varepsilon>0$ such that $\mathfrak U_1(t_\ast,\mathcal T_\varepsilon)=C_2 \varepsilon^{-2(1-l)}$ for all  $l \in (0,1)$. It is clear that $\mathcal T_\varepsilon\to \infty$ as $\varepsilon \to 0$.
By taking
\begin{gather*}
 \delta_1=\epsilon_1\sqrt{ \frac{\epsilon_2}{2}}, \quad 
 \delta_2= \left( \frac{\epsilon_1^2\epsilon_2 }{4 C_2}\right)^{\frac{1}{2l}},
\quad 
\mathcal T=\mathcal T_\varepsilon, 
\end{gather*}
we obtain
\begin{gather*}
\mathbb P\left(\sup_{0\leq t-t_s\leq \mathcal T_\varepsilon} |u(t)|\geq \epsilon_1\right) \leq \epsilon_2.
\end{gather*}
Combining this with \eqref{ch3} and \eqref{rhodz}, we get \eqref{rpD}. 
\end{proof}

\section{Examples}\label{sex}

\subsection{Example 1.} Consider again system \eqref{Ex0}. It was shown in Section~\ref{sec1} that this system corresponds to \eqref{PS} with $\nu_0=1$, $\mu(t)\equiv t^{-1/4}$, and ${\bf a}(\varrho,\varphi,S,t)=(a_1(\varrho,\varphi,S,t),a_2(\varrho,\varphi,S,t))^T$, ${\bf A}(\varrho,\varphi,S,t)=\{\alpha_{i,j}(\varrho,\varphi,S,t)\}_{2\times 2}$ defined by \eqref{fgex0str} and \eqref{fgex0}. It can easily be checked that the condition \eqref{mucond} holds with $m=4$ and $\chi_m=-1/4$. Let $s_0=2$, then there exist $\kappa=1$, $\varkappa=2$ such that the resonance condition \eqref{rc} holds. 

Let $n=2$ and $p=1$. Then, the change of variables described in Theorem~\ref{Th1} with $N=2$ transforms the system into \eqref{rpsi} with 
\begin{align*}
\Lambda(r,\psi,S,t) \equiv &\,  t^{-\frac{1}{2}}\Lambda_2(r,\psi)+\widetilde \Lambda_1(r,\psi,S,t), \\ 
\Omega(r,\psi,S,t)\equiv &\, t^{-\frac{1}{4}}\Omega_1(r,\psi) + t^{-\frac{1}{2}}\Omega_2(r,\psi)+\widetilde \Omega_2(r,\psi,S,t),
\end{align*}
where
\begin{align*}
\Lambda_2(r,\psi)\equiv & \, \frac{r}{64} \left(32 B_0 + 24  C_0 r^2 + 16  Q_1 \sin 2 \psi + \varepsilon^2 (6 - \cos 4 \psi) \right), \\ 
\Omega_1(r,\psi)\equiv & \, -\frac{s_1}{2}, \\
\Omega_2(r,\psi)\equiv & \, \frac{\cos 2 \psi }{16}  (4  Q_1 +  \varepsilon^2 \sin 2 \psi),
\end{align*}
and $\widetilde \Lambda_2(r,\psi,S,t)=\mathcal O(t^{-3/4})$, $\widetilde \Omega_2(r,\psi,S,t)=\mathcal O(t^{-3/4})$ as $t\to\infty$ uniformly for all $r\in (0,\mathcal R]$, $(\psi,S)\in\mathbb R^2$ with any fixed $\mathcal R>0$; $ Q_1= B_1- A_1$. 

Let $s_1=0$. Then, assumption \eqref{asq} holds with $q=n=2$. Note that the corresponding limiting system \eqref{limsys} may have several fixed points. 

If $(32 B_0\pm 16 Q_1 +7\varepsilon^2)  C_0<0$ and $\varepsilon^2\pm 4Q_1\neq 0$, then there exist
\begin{gather*}
\rho_0^\pm=\sqrt{-\frac{32 B_0\pm 16 Q_1 +7\varepsilon^2}{24  C_0}}, \quad \phi_0^\pm= \pm\frac{\pi}{4} ({\hbox{\rm mod }} \pi)
\end{gather*}
such that assumption \eqref{as1} holds with $\mathcal D_{n,q}^\pm=-3  C_0 (\rho_0^\pm)^2 (\varepsilon^2\pm 4 Q_1 )/32\neq 0$. It can easily be checked that 
\begin{gather*}
\lambda_n^\pm=\frac{3 C_0(\rho_0^\pm)^2}{4}, \quad 
\xi_n^\pm=\eta_q^\pm=0, \quad 
\omega_q^\pm=-\frac{\varepsilon^2 \pm 4 Q_1 }{8}.
\end{gather*}
In this case, $\beta_1^0=\lambda_n^\pm$ and $\tilde\beta_2^0=\beta_2^0=\omega_q^\pm$. If, in addition, $ C_0<0$ and $\varepsilon^2\pm 4 Q_1>0$, then it follows from Lemma~\ref{Lem2} that there is a stable phase locking solution $\rho_L^\pm(t)$, $\phi_L^\pm(t)$ of the corresponding truncated system \eqref{trsys} such that $\rho_L^\pm(t)\sim \rho_0^\pm$, $\phi_L^\pm(t)\sim \phi_0^\pm$ as $t\to\infty$. Moreover, from Theorem~\ref{Th2} it follows that for all $l\in(0,1)$ this regime is stochastically stable in system \eqref{Ex0} on an asymptotically long time interval as  $t_s\leq t\leq    \varepsilon^{-4(1-l)}/4$. Note that if $ C_0>0$ or $\varepsilon^2 \pm 4 Q_1<0$, then from Lemma~\ref{Lem1} it follows that the equilibrium $(\rho_0^\pm,\phi_0^\pm)$ of the corresponding limiting system \eqref{limsys} is unstable.

If $\varepsilon\neq 0$, $|4 Q_1 |\leq \varepsilon^2$ and $(32 B_0+5\varepsilon^2) C_0<0$, then there are
\begin{gather*}
\rho_0^{k}=\sqrt{-\frac{32 B_0+5\varepsilon^2}{24  C_0}}, \quad \phi_0^{k}= (-1)^{k+1}\arcsin \frac{4 Q_1}{\varepsilon^2}+\pi k, \quad k\in\mathbb Z
\end{gather*}
such that assumption \eqref{as1} holds with
$\mathcal D_{n,q}^k=3  C_0 (  \varepsilon   \rho_0^k\cos 2\phi_0^k)^2/32\neq 0$. 
It can easily be checked that 
\begin{gather*}
\lambda_n^k=\frac{3 C_0(\rho_0^k)^2}{4}, \quad 
\xi_n^k=\eta_q^k=0, \quad 
\omega_q^k=\frac{  \varepsilon^2 \cos^2 2\phi_0^k}{8}.
\end{gather*}
In this case, $\beta_1^0=\lambda_n^k$ and $\tilde\beta_2^0=\beta_2^0=\omega_q^k$. We see that $\tilde \beta_2^0>0$. It follows from Lemma~\ref{Lem1} that for all $k\in\mathbb Z$ the equilibrium $(\rho_0^k,\phi_0^k)$ of the corresponding limiting system \eqref{limsys} is unstable.

Note that the parameter plane $( B_0, Q_1)$ is divided on the following parts (see~Fig.~\ref{ppEx0}):
$\mathfrak D_+:=\{(B_0,Q_1)\in\mathbb R^2: Q_1>g_+(B_0,\varepsilon)\}$,  
$\mathfrak D_-:=\{(B_0,Q_1)\in\mathbb R^2: Q_1<g_-(B_0,\varepsilon)\}$, 
$\Gamma_\pm:=\{(B_0,Q_1)\in\mathbb R^2: Q_1=g_\pm(B_0,\varepsilon)\}$, 
$P_0=(0,0)$, 
$P_{\varepsilon,0}=(-7\varepsilon^2/32,0)$, 
$P_{\varepsilon,\pm}=(-3\varepsilon^2/32,\pm\varepsilon^2/4)$ and 
$\mathfrak D_0=\mathbb R^2\setminus \overline{\mathfrak D_+ \cup \mathfrak D_-}$, where
\begin{gather*}
g_\pm( B_0,\varepsilon)\equiv 
	\begin{cases}
		\displaystyle \mp 2\left( B_0+\frac{7\varepsilon^2}{32}\right), & \displaystyle  B_0<-\frac{3\varepsilon^2}{32}, \\
		\displaystyle \mp\frac{\varepsilon^2}{4}, & \displaystyle B_0\geq -\frac{3\varepsilon^2}{32}.
	\end{cases}
\end{gather*}
Note that if $\varepsilon\neq 0$, then $\mathfrak D_+ \cap \mathfrak D_-$ is not empty. Thus, if $(B_0,Q_1)\in\mathfrak D_\pm$ and $C_0<0$, then a stable phase locking regime $\rho_L^\pm(t)$, $\phi_L^\pm(t)$ occurs in the corresponding truncated system \eqref{trsys} and persists in the full stochastic system \eqref{Ex0} at least on an asymptotically long time interval (see~Fig.~\ref{FigEx1}). 

\begin{figure}
\centering
\subfigure[$\varepsilon=0$]
    {
     \includegraphics[width=0.42\linewidth]{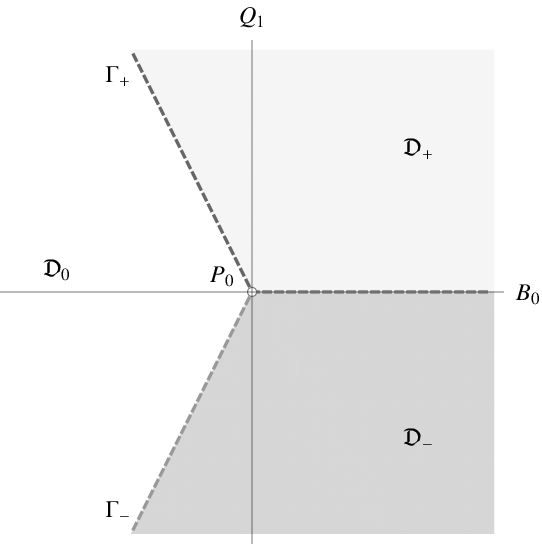}
    }
		\hspace{2ex}
\subfigure[$\varepsilon\neq 0$]
    {
     \includegraphics[width=0.42\linewidth]{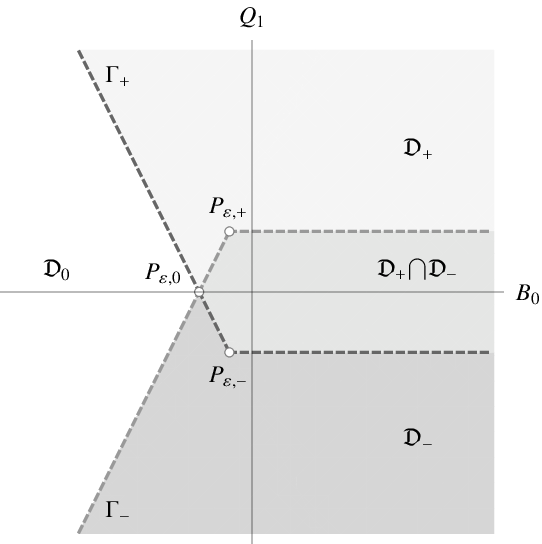}
    }
\caption{\footnotesize Partition of the parameter plane $(B_0,Q_1)$ for system \eqref{Ex0} with $n=2$, $p=1$, $s_0=2$ and $s_1=0$.} \label{ppEx0}
\end{figure}

\begin{figure}
\centering
{
   \includegraphics[width=0.4\linewidth]{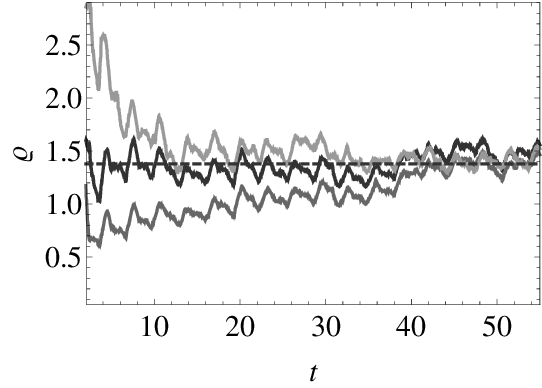}
}
\hspace{1ex}
{
   	\includegraphics[width=0.4\linewidth]{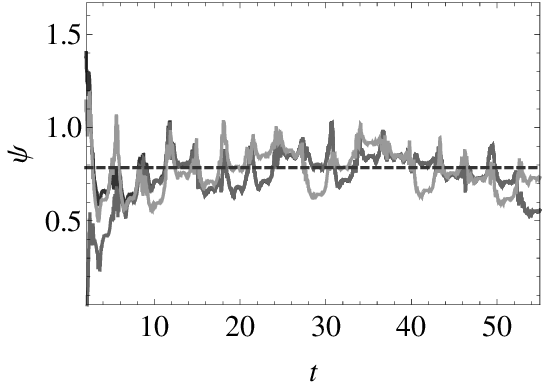}
}
\caption{\small The evolution of $\varrho(t)\equiv \sqrt{x_1^2(t)+x_2^2(t)}$ and $\psi(t)\equiv \varphi(t)-S(t)/2$, $\tan \varphi(t)=-x_2(t)/x_1(t)$ for solutions to system \eqref{Ex0} with $s_0=2$, $s_1=0$, $A_1=0$, $B_0=-1$, $B_1=2.5$, $C_0=-0.2$, $\varepsilon=0.4$ and different values of initial data. The dashed curves correspond to $\varrho(t)\equiv \rho_0^+$ and $\psi(t)\equiv -\pi/4$, where $\rho_0^+ \approx 1.38$.} \label{FigEx1}
\end{figure}

Now, let $s_1\neq 0$.  Then, assumption \eqref{asq} holds with $q=1$. It follows from Lemma~\ref{Lem4} that the system can be transformed into \eqref{rpsi2} with
\begin{gather*}
\mathcal F(z,\psi,t)= t^{-\frac{1}{2}}\mathcal F_2(z)+\widetilde{\mathcal F}_2(z,\psi,t), \quad 
\mathcal G(z,\psi,t)=- t^{-\frac{1}{4}}\frac{s_1}{2}+\widetilde{\mathcal G}_1(z,\psi,t),
\end{gather*}
where 
\begin{gather*}
\mathcal F_2(z)\equiv \frac{z}{32}\left(16 B_0+12 C_0 z^2+3\varepsilon^2\right),
\end{gather*}
and $\widetilde{\mathcal F}_2(z,\psi,t)=\mathcal O(t^{-3/4})$, $\widetilde{\mathcal G}_1(z,\psi,t)=\mathcal O(t^{-1/2})$ as $t\to\infty$ uniformly for all $z\in (0,\mathcal R]$, $\psi\in\mathbb R$ with any fixed $\mathcal R>0$. 

If $(16 B_0+3\varepsilon^2)  C_0<0$, then there exist
\begin{gather*}
\rho_0=\sqrt{-\frac{16 B_0+3\varepsilon^2}{12  C_0}}
\end{gather*}
such that assumption \eqref{as22} holds with
$\hat\xi_{n}=3  C_0 \rho_0^2/4\neq 0$. 
Therefore, if $ C_0<0$ and $16 B_0+3\varepsilon^2>0$, then it follows from Lemma~\ref{Lem5} that there is a phase drifting solution $\rho_D(t)$, $\phi_D(t)$ of the corresponding truncated system \eqref{trsys2} such that $\rho_D(t)\sim \rho_0$, $\phi_D(t)\sim -s_1 t^{1/4}$ as $t\to\infty$. Moreover, from Theorem~\ref{Th3} it follows that for all $l\in(0,1)$ this solution is partially stochastically stable in system \eqref{Ex0} on an asymptotically long time interval as  $t_s\leq t\leq  \varepsilon^{-4(1-l)}/4$ (see~Fig.~\ref{FigEx11}).

\begin{figure}
\centering
\subfigure[$\varepsilon=0$]
    {
     \includegraphics[width=0.42\linewidth]{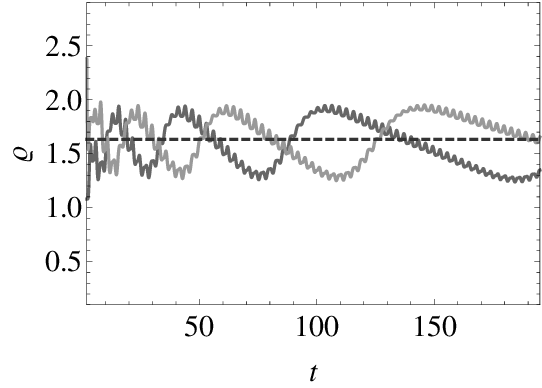}
    }
		\hspace{2ex}
\subfigure[$\varepsilon=0.5$]
    {
     \includegraphics[width=0.42\linewidth]{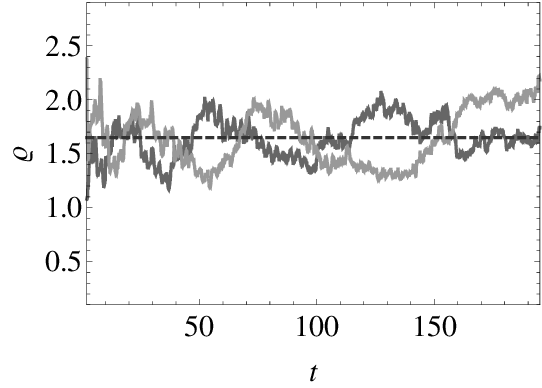}
    }
\caption{\small The evolution of $\varrho(t)\equiv \sqrt{x_1^2(t)+x_2^2(t)}$ for solutions to system \eqref{Ex0} with $s_0=2$, $s_1=8$, $A_1=0$, $B_0=1$, $B_1=1$, $C_0=-0.5$ and different values of $\varepsilon$ and initial data. The dashed curves correspond to $\varrho(t)\equiv \rho_0$, where $\rho_0=\sqrt{8/3+\varepsilon^2/2}$.} \label{FigEx11}
\end{figure}

\subsection{Example 2.}  
Consider a stochastically perturbed van der Pol-type equation with a damped parametric excitation (see~\cite[p. 283]{BM61}):
\begin{gather}\label{Ex2}
dx_1=x_2dt, \quad dx_2=\left(-x_1+t^{-\frac{n}{2}} f(x_1,x_2,S(t))\right) dt+\varepsilon t^{-\frac{p}{2}} g(x_1,S(t))\, dw_1(t),
\end{gather}
where
\begin{gather*}
f(x_1,x_2,S)\equiv A_1 x_1 \sin S +(B_0+B_1\sin 2S+C_0 x_1^2)x_2, \\ 
g(x,S)\equiv x \sin S, \quad S(t)\equiv t+ 2 s_1 t^{\frac {1}{2}}
\end{gather*}
with $A_1,B_0,B_1,C_0,s_1\in\mathbb R$. Note that system \eqref{Ex2} is not self-excited in the absence of parametric pumping $(A_1=B_1=\varepsilon=0)$ and $B_0<0$ (see~Fig.~\ref{FigEx20}, a). Let us show that decaying oscillatory perturbations can lead to stable resonant solutions with an asymptotically constant amplitude.

\begin{figure}
\centering
\subfigure[$B_1=0$]
{\includegraphics[width=0.4\linewidth]{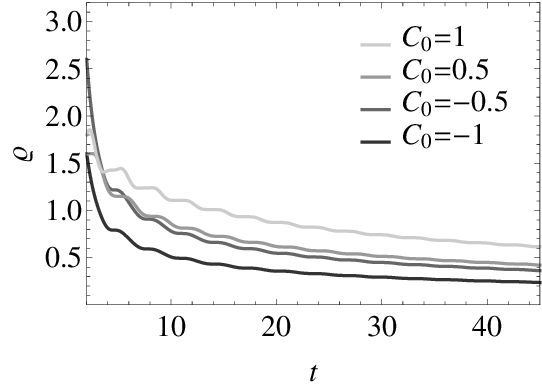} }
		\hspace{2ex}
		\subfigure[$C_0=-1$]
{\includegraphics[width=0.4\linewidth]{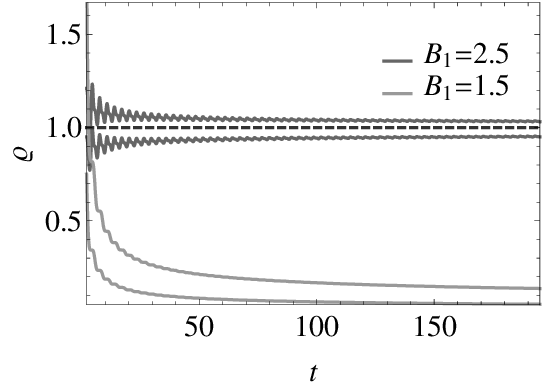} }
\caption{\small 
The evolution of $\varrho(t)\equiv \sqrt{x_1^2(t)+x_2^2(t)}$ for sample paths of solutions to system \eqref{Ex2} with $n=2$, $p=1$, $A_1=0$, $B_0=-1$, $\varepsilon=0$, and different values of the parameters $B_1$, $C_0$, and initial data.} \label{FigEx20}
\end{figure}

It can easily be checked that system \eqref{Ex2} corresponds to \eqref{PS} with $s_0=1$, $\nu_0=1$, $\mu(t)\equiv t^{-1/2}$, and ${\bf a}(\varrho,\varphi,S,t)=(a_1(\varrho,\varphi,S,t),a_2(\varrho,\varphi,S,t))^T$, ${\bf A}(\varrho,\varphi,S,t)=\{\alpha_{i,j}(\varrho,\varphi,S,t)\}_{2\times 2}$ defined by \eqref{fgex0str} and \eqref{fgex0}. It can easily be checked that the conditions \eqref{rc} and \eqref{mucond} holds with $\kappa=1$, $\varkappa=2$, $m=2$ and $\chi_m=-1/2$. 

Let $n=2$ and $p=1$. Then, the transformations described in Theorem~\ref{Th1} with $N=2$ reduce the system to \eqref{rpsi} with 
\begin{align*}
\Lambda(r,\psi,S,t) \equiv &\,  t^{-1}\Lambda_2(r,\psi)+\widetilde \Lambda_1(r,\psi,S,t), \\ 
\Omega(r,\psi,S,t)\equiv &\, t^{-\frac{1}{2}}\Omega_1(r,\psi) + t^{-1}\Omega_2(r,\psi)+\widetilde \Omega_2(r,\psi,S,t),
\end{align*}
where
\begin{align*}
\Lambda_2(r,\psi)\equiv & \, \frac{r}{32} \left(16 B_0 + 4  C_0 r^2 + 3\varepsilon^2 -2 Q_1 \cos (2\psi+\vartheta_0) \right), \\ 
\Omega_1(r,\psi)\equiv & \, - s_1, \\
\Omega_2(r,\psi)\equiv & \, \frac{Q_1}{16} \sin( 2 \psi +\vartheta_0),
\end{align*}
and $\widetilde \Lambda_2(r,\psi,S,t)=\mathcal O(t^{-3/2})$, $\widetilde \Omega_2(r,\psi,S,t)=\mathcal O(t^{-3/2})$ as $t\to\infty$ uniformly for all $r\in (0,\mathcal R]$, $(\psi,S)\in\mathbb R^2$ with any fixed $\mathcal R>0$, 
\begin{gather*}
Q_1=\sqrt{16 B_1^2+\varepsilon^4}, \quad \vartheta_0=\arcsin \frac{4B_1}{D_1}.
\end{gather*}

Let $s_1=0$. Then, assumption \eqref{asq} holds with $q=n=2$.  
If $(16 B_0+3\varepsilon^2+2Q_1) C_0<0$, then there exist
\begin{gather*}
\rho_0=\sqrt{-\frac{16 B_0+2Q_1+3\varepsilon^2}{4  C_0}}, \quad \phi_0 = \frac{\pi}{2}-\frac{\vartheta_0}{2}+\pi k, \quad k\in\mathbb R
\end{gather*}
such that assumption \eqref{as1} holds with $\mathcal D_{n,q}^\pm=- C_0 Q_1\rho^2 /32\neq 0$. It can easily be checked that 
\begin{gather*}
\lambda_n^\pm=\frac{3 C_0 \rho_0^2}{4}, \quad 
\xi_n^\pm=\eta_q^\pm=0, \quad 
\omega_q^\pm=-\frac{Q_1}{8}.
\end{gather*}
In this case, $\beta_1^0=\lambda_n^\pm$ and $\tilde\beta_2^0=\beta_2^0=\omega_q^\pm$. If, in addition, $ C_0<0$ and $B_0>-(2Q_1+3\varepsilon^2)/16$, then it follows from Lemma~\ref{Lem2} that there is a stable phase locking solution $\rho_L(t)$, $\phi_L(t)$ of the corresponding truncated system \eqref{trsys} such that $\rho_L(t)\sim \rho_0$, $\phi_L(t)\sim \phi_0$ as $t\to\infty$. It follows from Theorem~\ref{Th2} that for all $l\in(0,1)$ this regime is stochastically stable in system \eqref{Ex2} on an asymptotically long time interval as  $t_s\leq t\leq t_s \exp \varepsilon^{-2(1-l)}$. Note that if $ C_0>0$ and $B_0<-(2Q_1+3\varepsilon^2)/16$, then from Lemma~\ref{Lem1} it follows that the equilibrium $(\rho_0,\phi_0)$ of the corresponding limiting system \eqref{limsys} is unstable. 

In this case, the parameter plane $( B_0, B_1)$ is divided on the following parts (see~Fig.~\ref{ppEx2}):
$\mathfrak D:=\{(B_0,B_1)\in\mathbb R^2: B_0>g(B_1,\varepsilon)\}$, 
$\Gamma:=\{(B_0,B_1)\in\mathbb R^2: B_0=g(B_1,\varepsilon)\}$, 
$P_0=(0,0)$, 
$P_{\varepsilon}=(-5\varepsilon^2/16,0)$ and 
$\mathfrak D_0=\mathbb R^2\setminus \overline{\mathfrak D}$, where
\begin{gather*}
g( B_1,\varepsilon)\equiv 
	- \frac{3\varepsilon^2+2\sqrt{16 B_1^2+\varepsilon^4}}{16}.
\end{gather*}
Thus, if $(B_0,B_1)\in\mathfrak D_\pm$ and $C_0<0$, then a stable phase locking regime $\rho_L(t)$, $\phi_L(t)$ occurs in the corresponding truncated system \eqref{trsys} and persists in the full stochastic system \eqref{Ex2} at least on an asymptotically long time interval (see~Fig.~\ref{FigEx20}, b and Fig.~\ref{FigEx2}).

\begin{figure}
\centering
\subfigure[$\varepsilon=0$]
    {
     \includegraphics[width=0.42\linewidth]{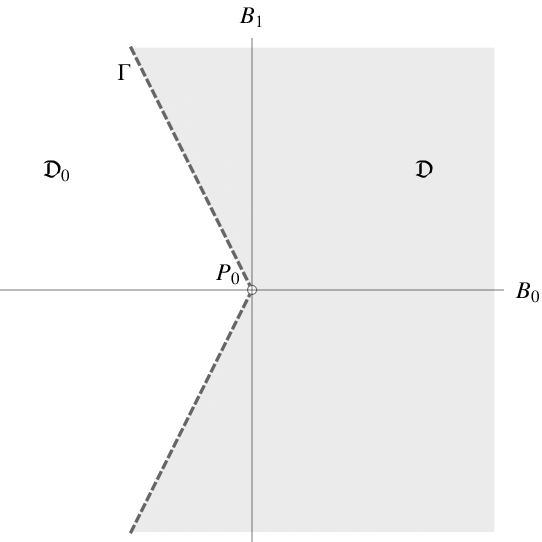}
    }
		\hspace{2ex}
\subfigure[$\varepsilon\neq 0$]
    {
     \includegraphics[width=0.42\linewidth]{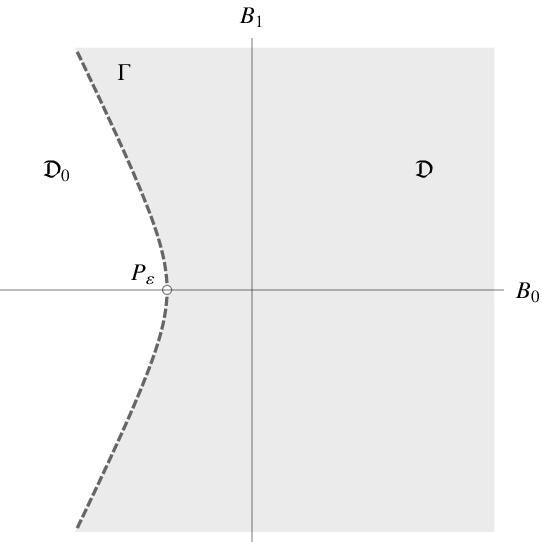}
    }
\caption{\footnotesize Partition of the parameter plane $(B_0,B_1)$ for system \eqref{Ex2} with $n=2$, $p=1$ and $s_1=0$.} \label{ppEx2}
\end{figure}

\begin{figure}
\centering
{
   \includegraphics[width=0.4\linewidth]{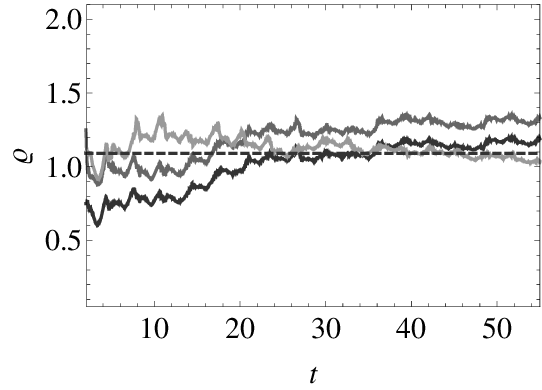}
}
\hspace{1ex}
{
  	\includegraphics[width=0.4\linewidth]{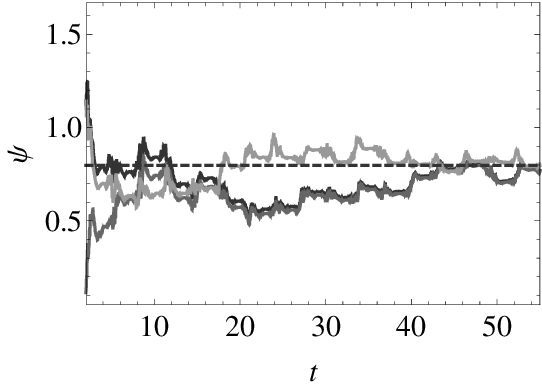}
}
\caption{\small The evolution of $\varrho(t)\equiv \sqrt{x_1^2(t)+x_2^2(t)}$ and $\psi(t)\equiv \varphi(t)-S(t)$, $\tan \varphi(t)=-x_2(t)/x_1(t)$ for solutions to system \eqref{Ex2} with $s_1=0$, $A_1=0$, $B_0=-1$, $B_1=2.5$, $C_0=-1$, $\varepsilon=0.5$ and different values of initial data. The dashed curves correspond to $\varrho(t)\equiv \rho_0$ and $\psi(t)\equiv \pi/2-\vartheta_0/2$, where $\rho_0 \approx 1.09$, $\pi/2-\vartheta_0/2\approx 0.79$.} \label{FigEx2}
\end{figure}

Now, let $s_1\neq 0$.  Then, assumption \eqref{asq} holds with $q=1$. It follows from Lemma~\ref{Lem4} that the system can be transformed into \eqref{rpsi2} with
\begin{gather*}
\mathcal F(z,\psi,t)= t^{-1}\mathcal F_2(z)+\widetilde{\mathcal F}_2(z,\psi,t), \quad 
\mathcal G(z,\psi,t)=- t^{-\frac{1}{2}}s_1+\widetilde{\mathcal G}_2(z,\psi,t),
\end{gather*}
where 
\begin{gather*}
\mathcal F_2(z)\equiv \frac{z}{32}\left(16 B_0+4 C_0 z^2+3\varepsilon^2\right),
\end{gather*}
and $\widetilde{\mathcal F}_2(z,\psi,t)=\mathcal O(t^{-3/2})$, $\widetilde{\mathcal G}_1(z,\psi,t)=\mathcal O(t^{-1/2})$ as $t\to\infty$ uniformly for all $z\in (0,\mathcal R]$, $\psi\in\mathbb R$ with any fixed $\mathcal R>0$. If $(16 B_0+3\varepsilon^2)  C_0<0$, then there exist
\begin{gather*}
\rho_0=\sqrt{-\frac{16 B_0+3\varepsilon^2}{4 C_0}}
\end{gather*}
such that assumption \eqref{as22} holds with
$\hat\xi_{n}=3  C_0 \rho_0^2/4\neq 0$. 
Therefore, if $ C_0<0$ and $16 B_0+3\varepsilon^2>0$, then it follows from Lemma~\ref{Lem5} that there is a phase drifting solution $\rho_D(t)$, $\phi_D(t)$ of the corresponding truncated system \eqref{trsys2} such that $\rho_D(t)\sim \rho_0$, $\phi_D(t)\sim -2 s_1 t^{1/2}$ as $t\to\infty$. From Theorem~\ref{Th3} it follows that for all $l\in(0,1)$ this solution is partially stochastically stable in system \eqref{Ex2} on an asymptotically long time interval as  $t_s\leq t\leq t_s \exp \varepsilon^{-2(1-l)}$ (see~Fig.~\ref{FigEx22}).

\begin{figure}
\centering
\subfigure[$\varepsilon=0$]
    {
     \includegraphics[width=0.42\linewidth]{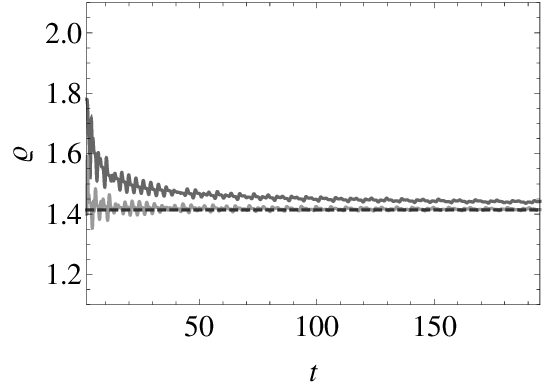}
    }
		\hspace{2ex}
\subfigure[$\varepsilon=0.5$]
    {
     \includegraphics[width=0.42\linewidth]{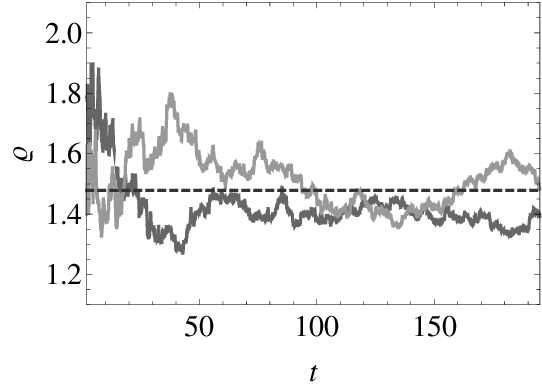}
    }
\caption{\small The evolution of $\varrho(t)\equiv \sqrt{x_1^2(t)+x_2^2(t)}$ for solutions to system \eqref{Ex2} with  $s_1=5$, $A_1=0$, $B_0=0.5$, $B_1=1$, $C_0=-1$ and different values of $\varepsilon$ and initial data. The dashed curves correspond to $\varrho(t)\equiv \rho_0$, where $\rho_0=\sqrt{2+3\varepsilon^2/4}$.} \label{FigEx22}
\end{figure}

}
\end{document}